\newtheorem{theorem}{Theorem}[section]
\newtheorem{corollary}[theorem]{Corollary}
\newtheorem{lemma}[theorem]{Lemma}
\theoremstyle{definition} \newtheorem{definition}[theorem]{Definition}
\theoremstyle{remark} 
\numberwithin{equation}{section}
\newcommand{\g}{\geqslant} 
  \newcommand{\RR}{\mathbb{R}}
\newcommand{\ZZ}{\mathbb{Z}} \newcommand{\CC}{\mathbb{C}}
\newcommand{\sph}{\mathbb{S}} \newcommand{\NN}{\mathbb{N}}
\newcommand{\p}{\partial} 
\newcommand{\les}{\leqslant} \newcommand{\lesa}{\lesssim}
\newcommand{\mc}[1]{\mathcal{#1}} \newcommand{\mb}[1]{\mathbf{#1}}
 \newcommand{\lr}[1]{ \langle #1
  \rangle} \newcommand{\ind}{\mathbbold{1}}
\renewcommand{\Re}{\mathrm{Re}}
\DeclareSymbolFont{bbold}{U}{bbold}{m}{n}
\DeclareSymbolFontAlphabet{\mathbbold}{bbold}
 \DeclareMathOperator*{\supp}{supp}
\begin{document}

\title[Conditional large data scattering for DKG]{Conditional large data
  scattering results for the Dirac-Klein-Gordon system}%
\author[T.~Candy]{Timothy Candy}%
\address[T.~Candy]{Universit\"at Bielefeld, Fakult\"at f\"ur
  Mathematik, Postfach 100131, 33501 Bielefeld, Germany}
\email{tcandy@math.uni-bielefeld.de}%
\author[S.~Herr]{Sebastian Herr}%
\address[S.~Herr]{Universit\"at Bielefeld, Fakult\"at f\"ur
  Mathematik, Postfach 100131, 33501 Bielefeld, Germany}
\email{herr@math.uni-bielefeld.de} \thanks{Financial support by the
  DFG through the CRC ``Taming uncertainty and profiting from
  randomness and low regularity in analysis, stochastics and their
  applications'' is acknowledged.}%
\subjclass[2010]{42B37, 35Q41}%
\keywords{Dirac-Klein-Gordon system, global
  existence, scattering}%

\begin{abstract}
  We obtain conditional results on the global existence and scattering for
  large solutions of the Dirac-Klein-Gordon system in critical spaces in dimension $1+3$. In particular, for bounded solutions
  we identify a space-time Lebesgue norm controlling the global behaviour. The proof relies on
  refined nonlinear estimates involving the controlling norm.
\end{abstract}
\maketitle
\section{Introduction}\label{sec:intro}
The Dirac-Klein-Gordon system for a spinor $\psi:\RR^{1+3}\to \CC^4$ and a scalar field $\phi:\RR^{1+3}\to \RR$ is given as
\begin{equation}\label{eqn:DKG}
\begin{split}
    -i \gamma^\mu \p_\mu \psi + M \psi &= \phi \psi \\
    \Box \phi + m^2 \phi &= \overline{\psi}\psi,
\end{split}
\end{equation}
for the Dirac matrices $\gamma^\mu\in \CC^{4\times 4}$, using the summation convention with respect to $\mu=0,\ldots, 3$, where $\partial_0=\partial_t$ and $\partial_j=\partial_{x_j}$ for $j=1,2,3$. Further, $m,M>0$ are mass parameters and $\overline{\psi}=\psi^\dagger \gamma^0$, where $\psi^\dagger$ is the conjugate transpose. The system \eqref{eqn:DKG} arises as a model for the description of particle interactions in relativistic quantum mechanics, see \cite{Bjorken1964} for more details, and we also refer to \cite{Thaller} for a thorough introduction to Dirac equations.
The aim of the present paper is to initiate the study of large dispersive solutions to \eqref{eqn:DKG}, building on our previous results \cite{Bejenaru2015,Candy2016} on the initial value problem \eqref{eqn:DKG} with small initial data.

Recently, global well-posedness and scattering results have been proven for initial data $(\psi(0),\phi(0),\partial_t\phi(0))$ which are small in spaces close to the critical Sobolev space, which is
\[
L^2(\RR^3)\times \dot{H}^{\frac12}(\RR^3)\times \dot{H}^{-\frac12}(\RR^3).
\]
More precisely, in the non-resonant case $2M>m>0$ \cite{Wang2015} proved a small data result in a critical Besov space with angular regularity, in \cite{Bejenaru2015} we treated the subcritical Sobolev spaces, and  for arbitrary $M,m>0$ we proved this for small initial data in the critical Sobolev space with some small amount of additional angular regularity in \cite{Candy2016}. We refer to the introductions of \cite{Candy2016,Bejenaru2015} for more references to earlier work.

The key in \cite{Wang2015,Bejenaru2015,Candy2016} is the use of the null-structure in \eqref{eqn:DKG} discovered in \cite{D'Ancona2007b} and the construction of custom-made function spaces which allow for global-in-time nonlinear estimates. Here, as a first step towards a better understanding of large solutions to \eqref{eqn:DKG}, we aim at identifying space-time Lebesgue norms which control the global behaviour of dispersive solutions.

We always assume that the Sobolev regularity is $s_0 \g 0$, the angular regularity is $\sigma \g 0$, and the masses $M, m >0$ satisfy
     \begin{align}  \label{eqn:non resonant regime}\text{ either }\;\;& 0<s_0 \ll 1,\; \sigma=0,\;\text{ and  }2M>m>0,\\
 \label{eqn:resonant regime} \text{or }\;\; &s_0=0,\; \sigma>0,\quad \quad \; \text{ and }M, m >0.
    \end{align}
 We take data in the Sobolev space \[H^{s_0}_\sigma(\RR^3) = (1-\Delta_{\sph^2})^{-\sigma} H^{s_0}(\RR^3), \text{ with norm }\|f\|_{H^{s_0}_\sigma} =\| ( 1- \Delta_{\sph^2})^\sigma f \|_{H^{s_0}}.\] Thus in the non-resonant regime,  \eqref{eqn:non resonant regime}, we consider data in the standard Sobolev spaces $H^{s_0} \times H^{s_0 + \frac{1}{2}} \times H^{s_0 - \frac{1}{2}}$, while in \eqref{eqn:resonant regime}, which includes the resonant regime $0<2M<m$, we work in the critical spaces with a small amount of angular regularity $H^0_\sigma \times H^{\frac{1}{2}}_\sigma \times H^{-\frac{1}{2}}_\sigma$, $\sigma>0$.  Given an interval $I \subset \RR$ and $s \g 0$, we define the dispersive type norm $\| u \|_{\mb{D}^s_0(I)} = \| \lr{\nabla}^s u \|_{L^4_{t,x}(I\times \RR^3)}$, and for $s \g 0$,  $ \sigma > 0$, we define
    \begin{equation}\label{eqn:defn of D}  \| u \|_{\mb{D}^s_\sigma(I)} =  \bigg( \sum_{N \in 2^\NN} N^{2\sigma} \|\lr{\nabla}^s H_N u \|_{L^4_{t,x}(I\times \RR^3)}^2 \bigg)^\frac{1}{2},
    \end{equation} where $H_N$ denotes the projection on angular frequencies of size $N$, see \eqref{eq:HN} below.
Our main result is the following.
  \begin{theorem}\label{thm:cond-dkg}
 Let $s_0, \sigma \g 0$ and $M, m >0$ satisfy either \eqref{eqn:non resonant regime} or \eqref{eqn:resonant regime}. Consider any maximal $H^{s_0}_\sigma$-solution
    \[\psi\in C_{\mathrm{loc}}(I^*,H^{s_0}_{\sigma}(\RR^3,\CC^4))\text{ and }(\phi,\partial_t\phi )\in C_{\mathrm{loc}}(I^*,H^{
      s_0+\frac12}_{\sigma} (\RR^3,\RR) \times H^{s_0-\frac12}_{\sigma}(\RR^3,\RR))
    \]
 of \eqref{eqn:DKG} which is bounded, i.e.
    $$ \sup_{t\in I^*}\Big( \| \psi(t) \|_{H^{s_0}_\sigma(\RR^3)} + \| (\phi, \p_t \phi)(t) \|_{H^{s_0+\frac{1}{2}}_\sigma \times H^{s_0 - \frac{1}{2}}_\sigma(\RR^3)}\Big) <+\infty.$$
If $\| \psi \|_{\mb{D}^{-\frac{1}{2}}_\sigma(I^*)}<+\infty$, then we have $I^*=\RR$ and $(\psi,\phi)$ scatters to a free solution as $t\rightarrow \pm \infty$.
  \end{theorem}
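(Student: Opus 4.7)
The plan is to reduce the theorem to the small-data global well-posedness and scattering theory of \cite{Bejenaru2015,Candy2016} by partitioning $I^*$ into finitely many subintervals on which the control norm $\|\psi\|_{\mathbf{D}^{-\frac{1}{2}}_\sigma}$ is small, and closing a bootstrap in the resolution spaces of those earlier papers on each subinterval. The key new ingredient is a refined set of bilinear estimates in which one factor of the nonlinearity is measured only in the weak Lebesgue-type norm $\mathbf{D}^{-\frac{1}{2}}_\sigma$.

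Concretely, I would let $\mathbf{F}^{s_0}_\sigma$ denote the resolution space for $\psi$ from \cite{Candy2016} (respectively \cite{Bejenaru2015} in the non-resonant case \eqref{eqn:non resonant regime}), with a companion space $\mathbf{F}^{s_0+\frac{1}{2}}_\sigma$ for $\phi$, and $\mathbf{N}^{s}_\sigma(I)$ the corresponding nonlinearity space. These spaces embed into $C(I,H^{s_0}_\sigma(\RR^3))$ and satisfy bilinear estimates of the schematic form
\[ \|\phi\psi\|_{\mathbf{N}^{s_0}_\sigma(I)}\lesssim \|\phi\|_{\mathbf{F}^{s_0+\frac{1}{2}}_\sigma(I)}\|\psi\|_{\mathbf{F}^{s_0}_\sigma(I)}, \qquad \|\overline{\psi}_1\psi_2\|_{\mathbf{N}^{s_0-\frac{1}{2}}_\sigma(I)}\lesssim \|\psi_1\|_{\mathbf{F}^{s_0}_\sigma(I)}\|\psi_2\|_{\mathbf{F}^{s_0}_\sigma(I)}. \]
The central technical step will be to upgrade these to refined inequalities of the schematic form
\[ \|\phi\psi\|_{\mathbf{N}^{s_0}_\sigma(I)}\lesssim \|\phi\|_{\mathbf{F}^{s_0+\frac{1}{2}}_\sigma(I)}\,\|\psi\|_{\mathbf{F}^{s_0}_\sigma(I)}^{1-\theta}\,\|\psi\|_{\mathbf{D}^{-\frac{1}{2}}_\sigma(I)}^{\theta} \]
for some $\theta>0$, with an analogous refinement for $\overline{\psi}_1\psi_2$. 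Establishing such a refinement requires revisiting the frequency--angular decomposition underlying the bilinear estimates of \cite{Bejenaru2015,Candy2016} and exchanging one of the high-frequency Strichartz-type pieces for the raw $L^4_{t,x}$ information directly encoded in $\mathbf{D}^{-\frac{1}{2}}_\sigma$, while preserving summability over dyadic radial and angular frequency shells.

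Once the refined bilinear estimates are in place, the conclusion follows a familiar pattern. Writing $R=\sup_{t\in I^*}\bigl(\|\psi(t)\|_{H^{s_0}_\sigma}+\|(\phi,\partial_t\phi)(t)\|_{H^{s_0+\frac{1}{2}}_\sigma\times H^{s_0-\frac{1}{2}}_\sigma}\bigr)$ and choosing $\epsilon_0=\epsilon_0(R)>0$ sufficiently small, one partitions $I^*$ into finitely many intervals $I_1,\dots,I_J$ with $\|\psi\|_{\mathbf{D}^{-\frac{1}{2}}_\sigma(I_j)}<\epsilon_0$. On each $I_j$, the linear inequality $\|\psi\|_{\mathbf{F}^{s_0}_\sigma(I_j)}\lesssim R+\|\phi\psi\|_{\mathbf{N}^{s_0}_\sigma(I_j)}$, combined with the refined estimate and its Klein--Gordon counterpart, yields a polynomial inequality of the form $X\lesssim R+\epsilon_0^\theta X^{2-\theta}$ for $X=\|\psi\|_{\mathbf{F}^{s_0}_\sigma(I_j)}+\|\phi\|_{\mathbf{F}^{s_0+\frac{1}{2}}_\sigma(I_j)}$, which closes by a continuity argument in the length of $I_j$ to give a uniform a priori bound $X\le C(R)$. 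Chaining these bounds over $j=1,\dots,J$ produces a global bound in the resolution space on $I^*$, after which the blow-up criterion from \cite{Bejenaru2015,Candy2016} rules out any finite endpoint of $I^*$, so that $I^*=\RR$. Scattering then follows from the Duhamel representation and the non-refined bilinear estimates exactly as in the small-data theory.

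The main obstacle is the refined bilinear estimate itself. Since one factor is only controlled in $L^4_{t,x}$ at a regularity loss of $\tfrac{1}{2}$ derivative, the modulation and null-frame decompositions that underpin the original small-data analysis are not available on that factor, and each frequency and angular interaction of the null form must be re-examined. The transversality provided by the mass gap $2M>m$ in \eqref{eqn:non resonant regime}, and by the angular regularity $\sigma>0$ in \eqref{eqn:resonant regime}, will be essential in order to gain at the (near-)resonant high-low interactions while maintaining linearity of the estimate in the small factor $\|\psi\|_{\mathbf{D}^{-\frac{1}{2}}_\sigma}$.
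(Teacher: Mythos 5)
Your proposal takes essentially the same approach as the paper: both hinge on the refined bilinear estimates (Theorem~\ref{thm:main bilinear}) in which a small power of the $\mathbf{D}^{-1/2}_\sigma$ norm replaces a corresponding power of the full resolution norm, combined with a continuity bootstrap in the $V^2$-based spaces (Theorem~\ref{thm:main-det}) followed by the existence of one-sided limits in $V^2$ and the local theory (Theorem~\ref{thm:lwp}). The only simplification in the paper is that, since the Sobolev norm is assumed bounded on all of $I^*$, one need not chain over a finite partition of $I^*$ into intervals of small control norm -- by dominated convergence it suffices to pick a single interval $[t_1,t^*)$ adjoining the endpoint on which $\|\psi\|_{\mathbf{D}^{-1/2}_\sigma}$ is small, restart with the bounded data at $t_1$, and conclude from Theorem~\ref{thm:main-det} on that one interval.
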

The norm $\|\cdot \|_{\mb{D}^{-\frac{1}{2}}_\sigma}$ is scaling-critical for \eqref{eqn:DKG}, and in particular, if $\psi$ is a free solution to the Dirac equation, we have the Strichartz bound $\| \psi \|_{\mb{D}^{-\frac{1}{2}}_\sigma(\RR)} \lesa \| \psi(0) \|_{H^0_\sigma}$. It is important to note that a condition of the form $\| \psi \|_{\mb{D}^{-\frac{1}{2}}_\sigma(I^*)}<\infty$ is necessary to ensure scattering. This follows from the fact that \eqref{eqn:DKG} admits global stationary solutions of the form
    \begin{equation}\label{eqn:solitons}\psi(t)=e^{it\omega}\psi_\ast, \qquad \phi(t)=(m^2-\Delta)^{-1}(\overline{\psi_\ast}\psi_\ast), \quad \omega\in(0,M),\end{equation}
where $\psi_\ast:\RR^3\to \CC^4$ is smooth and exponentially decreasing, see \cite{Esteban1996}. In particular, there exist global solutions to \eqref{eqn:DKG} which do not scatter to free solutions as $t\rightarrow \pm \infty$.

In recent years the notion of type-I and type-II blow-up has played an important role in the study of nonlinear wave equations, see the survey \cite{Kenig2015} for more details and references. Roughly speaking, a maximal solution is of type-I if its spatial Sobolev norm goes to infinity in finite time, and it is of type-II if the spatial Sobolev norm stays finite, but it is ceases to exist for all times. Thus an alternative phrasing of Theorem \ref{thm:cond-dkg} is that any type-II blow-up solution $(\psi,\phi)$ of \eqref{eqn:DKG} with maximal interval $I^\ast$ must satisfy \begin{equation}\label{eq:infinite-d}\| \psi \|_{\mb{D}^{-\frac{1}{2}}_\sigma(I^*)}=+\infty.
\end{equation}

The main technical result behind Theorem \ref{thm:cond-dkg} is Theorem \ref{thm:main-det} which gives good control over any (strong) solutions with small $\mb{D}^{-\frac{1}{2}}_{\sigma}$ norm. We remark that it is possible to replace the hypothesis $\| \psi \|_{\mb{D}^{-\frac{1}{2}}_\sigma(I^*)}<+\infty$ by $\|(\phi,\lr{\nabla}^{-1} \p_t \phi) \|_{\mb{D}^{0}_\sigma(I^*)}<+\infty$, which follows immediately from the statement of Theorem \ref{thm:main-det} and the proof of Theorem \ref{thm:cond-dkg} presented in Section \ref{sec:proof of thm cond-dkg}.

In the case of radial data, Theorem \ref{thm:cond-dkg}, covers the critical regularity case $\psi \in L^2$, $(\phi, \p_t \phi) \in H^{\frac{1}{2}}\times H^{-\frac{1}{2}}$. Strictly speaking however, as the linear Dirac operator does preserve radial solutions, it is better to consider the partial wave subspace of the lowest degree. More precisely, we let $\mc{H}$ be the collection of spinors $\psi_0 \in L^2(\RR^3, \CC^4)$ of the form
    $$ \psi_0(x) = \begin{pmatrix} f(|x|) \begin{pmatrix} 0 \\ 1 \end{pmatrix} \\ g(|x|) \begin{pmatrix} \omega_1 + i \omega_2 \\ \omega_3 \end{pmatrix} \end{pmatrix}$$
with $\omega = \frac{x}{|x|}$, and $f, g \in L^2([0,\infty),r^2dr)$. The subspace $\mc{H}$ is preserved under the linear Dirac operator \cite{Thaller}. Moreover, a computation shows that the subspace $\mc{H}$ is preserved under the nonlinear evolution \eqref{eqn:DKG}, provided that $\phi(0)$ and $\p_t \phi(0)$ are radial. It is worth noting that there are other partial wave subspaces which remain invariant under the evolution of \eqref{eqn:DKG} \cite{Thaller}, however the class $\mc{H}$ is used frequently in the literature, for instance the stationary solutions \eqref{eqn:solitons} belong to $\mc{H}$.  

Applying Theorem \ref{thm:cond-dkg} to data $\psi(0) \in \mc{H}$, and exploiting the conservation of charge, $\| \psi(t) \|_{L^2_x}$, we can then drop the assumption
  $\|\psi\|_{L^\infty(I^*, H^{0}_{\sigma}(\RR^3))}<+\infty$.
  
  \begin{corollary}\label{cor:rad}
Let $m,M>0$. Suppose that
    \[(\psi(0), \phi(0),\partial_t \phi(0))\in L^2(\RR^3) \times H^{\frac12}(\RR^3)\times
    H^{-\frac12}(\RR^3) \]
with $\psi(0) \in \mc{H}$, $\phi(0)$, $\p_t\phi(0)$ radial, and that the corresponding $L^2$-maximal solution \[\psi\in C_{\mathrm{loc}}(I^*,L^2(\RR^3,\CC^4))\text{ and }(\phi,\partial_t\phi )\in C_{\mathrm{loc}}(I^*,H^{
      \frac12} (\RR^3,\RR) \times H^{-\frac12}(\RR^3,\RR))
    \]
    of \eqref{eqn:DKG} satisfies
\[ \sup_{t\in I^*} \| (\phi, \p_t \phi)(t) \|_{H^{\frac{1}{2}}\times H^{- \frac{1}{2}}(\RR^3)} <+\infty \text{ and } \|\lr{\nabla}^{-\frac12} \psi\|_{L^4(I^*\times \RR^3)}<+\infty.\]
Then we have $I^*=\RR$ and $(\phi,\psi)$ scatters to a free
    solution as $t\rightarrow \pm \infty$.
  \end{corollary}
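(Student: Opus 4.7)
The plan is to bootstrap from the corollary's hypotheses to the hypotheses of Theorem \ref{thm:cond-dkg} in the regime \eqref{eqn:resonant regime}. The corollary is missing two pieces of information compared to the theorem: a uniform bound on $\|\psi(t)\|_{H^0_\sigma}$ for some $\sigma>0$, and an upper bound on $\|\psi\|_{\mb{D}^{-1/2}_\sigma(I^\ast)}$. Both will follow from the structural constraints imposed by the partial wave subspace $\mc{H}$ and the radiality of $(\phi,\p_t\phi)$, combined with conservation of charge.

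First I would invoke the invariance of the class $\mc{H}$ together with the radiality of $(\phi,\p_t\phi)$ under the nonlinear flow (mentioned in the paragraph preceding the corollary). Since the two non-trivial angular modes appearing in the definition of $\mc{H}$ are the constant function on $\sph^2$ and linear functions $\omega_1,\omega_2,\omega_3$, the operator $-\Delta_{\sph^2}$ acts on elements of $\mc{H}$ by multiplication by at most $2$. Consequently, for any $\sigma\g 0$ and any $\psi(t)\in\mc{H}$,
\[
\|\psi(t)\|_{H^0_\sigma}\lesa \|\psi(t)\|_{L^2_x},
\]
and since $\phi(t)$ is radial, $\|\phi(t)\|_{H^{1/2}_\sigma}=\|\phi(t)\|_{H^{1/2}}$ and similarly for $\p_t\phi(t)$. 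Analogously, the angular Littlewood--Paley projections $H_N\psi$ vanish for all sufficiently large $N$, so the sum defining $\mb{D}^{-1/2}_\sigma$ in \eqref{eqn:defn of D} reduces to a finite one and
\[
\|\psi\|_{\mb{D}^{-1/2}_\sigma(I^\ast)}\lesa \|\lr{\nabla}^{-1/2}\psi\|_{L^4_{t,x}(I^\ast\times\RR^3)}<+\infty
\]
by hypothesis.

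Next, I would use conservation of the charge $\|\psi(t)\|_{L^2_x}=\|\psi(0)\|_{L^2_x}$ for \eqref{eqn:DKG}, which is a standard consequence of the self-adjoint structure of the Dirac operator and the reality of $\phi$. This, combined with the hypothesis $\sup_{t\in I^\ast}\|(\phi,\p_t\phi)(t)\|_{H^{1/2}\times H^{-1/2}}<+\infty$ and the previous paragraph, yields
\[
\sup_{t\in I^\ast}\Big(\|\psi(t)\|_{H^0_\sigma}+\|(\phi,\p_t\phi)(t)\|_{H^{1/2}_\sigma\times H^{-1/2}_\sigma}\Big)<+\infty
\]
for every $\sigma>0$.

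Finally, fixing any $\sigma>0$ places us in the regime \eqref{eqn:resonant regime}, and all hypotheses of Theorem \ref{thm:cond-dkg} are now verified. Applying that theorem yields $I^\ast=\RR$ and scattering of $(\psi,\phi)$ to free solutions as $t\to\pm\infty$. The only non-routine step is the quantitative angular localization of elements of $\mc{H}$; this is purely algebraic and reduces to checking that the vector-valued functions in the definition have components in the first two eigenspaces of $-\Delta_{\sph^2}$, so I do not expect any genuine obstacle.
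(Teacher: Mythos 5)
Your proposal is correct and follows essentially the same route as the paper: both reduce to Theorem \ref{thm:cond-dkg} by observing that elements of $\mc{H}$ contain only spherical harmonics of degree $\les 1$ (so $H_N\psi$ vanishes for $N\gg1$, making all angular weights trivial), and both use conservation of charge to supply the missing $L^\infty_t L^2_x$ bound on $\psi$.
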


The main novelty in this paper is a certain refinement of the multilinear estimates from \cite{Bejenaru2015,Candy2016} in the sense that we allow for small positive powers of suitable $L^4_{t,x}$ norms on the right hand side, see Theorem \ref{thm:main bilinear}. This is achieved by exploiting the recent progress on bilinear adjoint Fourier restriction estimates \cite{Candy2017}. The particular result from \cite{Candy2017} we will exploit here is summarised in Theorem \ref{thm:bilinear small scale KG}. These $L^4_{t,x}$ norms have the elementary yet crucial property that they can be made arbitrarily small by shrinking the time interval. This has been used successfully to prove global well-posedness and scattering results for Wave and Schr\"odinger equations with polynomial nonlinearities, see e.g. \cite{Bourgain1999,Colliander2006,Killip2013} and the references therein. However, for equations with derivative nonlinearities, such as Wave maps or the Maxwell-Klein-Gordon system, this is more difficult to exploit due to the presence of more involved norms. Recently, there has been significant progress, such as \cite{Sterbenz2010,Sterbenz2010b,Oh2016a,Oh2016b}.
 Our contribution here is closer in spirit to the controlling norm result in \cite{Dodson2015} in the context of Schr\"odinger maps.

The estimates proved in this paper have further applications. In particular, they are applied in \cite{Candy2017b} to prove scattering results for solutions which approximately satisfy a so-called Majorana condition, which defines an open set of large initial data yielding global solutions which scatter.

 The paper is organised as follows. In Section \ref{sec:not} we introduce notation, which is consistent with our previous work \cite{Candy2016}. Further, we define the relevant function spaces and provide some auxiliary results. In Section \ref{sec:dkg} we prove our main local results, namely Theorem \ref{thm:lwp} and Theorem \ref{thm:main-det}, based on nonlinear estimates in Theorem \ref{thm:main bilinear}, whose proof relies on the results of the last two sections. In Section \ref{sec:proof of thm cond-dkg} we prove our main result of this paper, Theorems \ref{thm:cond-dkg} and Corollary \ref{cor:rad}. In Section \ref{sec:prelim} we introduce further notation and preliminary results which are important in the remaining sections. Section \ref{sec:multi-sub} is devoted to the proof of the crucial multilinear estimates in the subcritical regime, while in Section \ref{sec:multi-crit} critical regime is considered, hence completing the proof of Theorem \ref{thm:main bilinear}.

\section{Notation and Function Spaces}\label{sec:not}
Let $\ZZ$ denote the integers and $\NN$ the non-negative integers.
Given a function $f\in L^1_x(\RR^3)$, we let $\widehat{f}(\xi)= \int_{\RR^3} f(x) e^{-ix\cdot \xi} dx$ denote the spatial Fourier transform of $f$. Similarly, for $u \in L^1_{t,x}(\RR^{1+3})$, we take $\widetilde{u}(\tau, \xi) = \int_{\RR^{1+3}} u(t,x) e^{-i(t,x)\cdot(\tau,  \xi)} dx dt$ to be the space-time Fourier transform of $u$. We extend these transforms to tempered distributions in the usual manner. Let $\rho \in C^\infty_0(\RR)$ be a smooth bump function satisfying $\supp \rho \subset \{\frac{1}{2}<t<2\}$ and $\sum_{ \lambda \in 2^{\ZZ}} \rho\big( \tfrac{t}{\lambda}\big) =1$ for $t\ne 0$, and take
        $\rho_{\les 1} = \sum_{\lambda \les 1} \rho(\frac{t}{\lambda})$
for $t \not = 0$, and $\rho_{\les 1}(0)=1$.  Set $\rho_{\les \lambda}(t)=\rho_{\les 1}(\frac{t}{\lambda})$ for $\lambda\in 2^{\ZZ}$. For each $\lambda \in 2^\NN$ and $d \in 2^\ZZ$ we define the Littlewood-Paley multipliers $P_\lambda$, and the modulation localisation operators $C^{\pm, m}_d$ as

    $$ \widehat{P_\lambda f}(\xi) = \rho(\tfrac{|\xi|}{\lambda})\widehat{f}(\xi), \qquad \widehat{P_1 f}(\xi) = \rho_{\les 1}(|\xi|)\widehat{f}(\xi), \qquad \widetilde{C^{\pm, m}_d u}(\tau, \xi)  = \rho( \tfrac{| \tau \pm \lr{\xi}_m|}{d})\widetilde{u}(\tau, \xi)$$
where $\lr{\xi}_m = ( m^2 + |\xi|^2)^\frac{1}{2}$. Thus $P_\lambda$ localises to frequencies of size $\lambda$, and $C^{\pm, m}_d$ localises to space-time frequencies at distance $d$ from the surface $\tau \pm \lr{\xi}_m = 0$. We also define
        $$ C^{\pm, m}_{\les d} = \sum_{ d' \les d} C^{\pm, m}_{d'}$$
which localises to space-time frequencies within $d$ of $\tau \pm \lr{\xi}_m=0$.
We define the localisation operators to angular frequencies of size $N\in 2^{\NN}$ by
\begin{equation}\label{eq:HN}
\begin{split}
(H_N f)(x) ={}& \sum_{\ell \in \NN} \sum_{n=0}^{2\ell}  \rho\big( \tfrac{\ell}{N} \big)  \lr{f(|x|\cdot), y_{\ell, n}}_{L^2(\sph^2)} y_{\ell, n}\big(\tfrac{x}{|x|}\big) \; (N>1),\\  H_1f(x) ={}& \sum_{\ell \in \NN} \sum_{n=0}^{2\ell}  \rho_{\les 1}( \ell) \lr{f(|x|\cdot), y_{\ell, n}}_{L^2(\sph^2)} \, y_{\ell, n}\big(\tfrac{x}{|x|}\big),
\end{split}
\end{equation}
where $(y_{\ell, n})_{n=0, ..., 2\ell}$ denotes an orthonormal basis for the space of homogeneous harmonic polynomials of degree $\ell$, as in \cite[Subsection 7.2]{Candy2016}.

To simplify notation somewhat, in Section \ref{sec:multi-sub} and Section \ref{sec:multi-crit}, we use the shorthand
    $$ C^\pm_d = C^{\pm, 1}_d, \qquad \mc{C}^\pm_d = \Pi_\pm C^{\pm, M}_d$$
where the projections $\Pi_{\pm}$ are defined as
     \begin{equation}\label{eqn:dirac proj}
         \Pi_\pm = \tfrac{1}{2} \Big( I \pm \frac{1}{\lr{\nabla}_M} (   - i \gamma^0 \gamma^j \p_j + M \gamma^0) \Big).
     \end{equation}
The projections $\Pi_\pm$ diagonalise the Dirac operator, for instance we have the identity
    $$  \big( - i \gamma^\mu \p_\mu + M\big) \Pi_\pm \psi = \gamma^0 ( - i \p_t \pm \lr{\nabla}_M) \Pi_\pm \psi, $$
and are also used to uncover the null structure hidden in the product $\overline{\psi} \psi$. See Section \ref{sec:prelim} for further details. \\

Define the propagator for the homogeneous half wave equation as $\mathcal{U}^{\pm}_{ m}(t) = e^{\mp i t \lr{\nabla}_m}$ and let $\mc{U}_{M}$ denote the free Dirac propagator. Explicitly we have
    $$ \mc{U}_M(t) = \mc{U}^{+}_{ M}(t) \Pi_{+} + \mc{U}^{-}_{ M}(t) \Pi_-.$$
Given $t_0 \in I \subset \RR$ and $F \in L^\infty_t L^2_x(I\times \RR^3)$, for $t\in I$ we let $\mc{I}^{\pm, m}_{t_0}[F](t)$ denote the inhomogeneous solution operator for the half-wave equation
    $$ \mc{I}^{\pm, m}_{t_0}[F](t) = i \int_{t_0}^t \mc{U}^\pm_m(t-t') F(t') dt' $$
and $\mc{I}^{M}_{t_0}[F] (t)= \mc{I}^{+, M}_{t_0}[\Pi_+F](t) + \mc{I}^{-, M}_{t_0}[\Pi_-F](t)$ denote the inhomogeneous solution operator for the Dirac equation. Thus, if $- i \p_t u + \lr{\nabla}_m u = F$, then we can write the Duhamel formula as
        $$ u(t) = \mc{U}^+_m(t-t_0) u(t_0) + \mc{I}^{+, m}_{t_0}[F](t).$$
Similarly, for the Dirac equation, if $ -i \gamma^\mu \p_\mu \psi + M \psi = G$, then we have
    $$ \psi(t) = \mc{U}_M(t-t_0)\psi(t_0) + \mc{I}^M_{t_0}[\gamma^0 G](t).$$

  We now define the main function spaces in which we construct solutions. The basic building blocks are the $V^2$ spaces introduced by Koch and Tataru \cite{KochTataru2005}, and studied systematically in \cite{Hadac2009,Koch2014}. Let $\mc{Z} = \{ (t_j)_{j \in \ZZ} \mid t_j \in \RR \text{ and } t_j<t_{j+1} \}$ and $1\leq p<\infty$. For a function $u: \RR \rightarrow L^2_x$, the $p$-variation of $u$ is defined as
        \[| u |_{V^p} = \sup_{(t_j) \in \mc{Z}} \Big( \sum_{j \in \ZZ} \| u(t_j) - u(t_{j-1}) \|_{L^2_x}^p \Big)^\frac{1}{p}.\]
 The normed space $V^p$ is then defined to be all right continuous functions $u: \RR \rightarrow L^2_x(\RR^3)$ satisfying
     \[ \| u \|_{V^p} = \| u\|_{L^\infty_t L^2_x} + | u |_{V^p} < \infty.\]
The space $V^p$ is complete and functions in  $V^p$ have one-sided limits at each point, including $\pm\infty$. Define $V^p_{\pm, m}=\mc{U}^\pm_m(t)V^p$ with norm
\[ \| u \|_{V^p_{\pm, m}} = \| \mc{U}^\pm_m(-t) u \|_{V^p}.\]

For $m \g 0$ and $2\les q \les \infty$, and any
          $d \in 2^\ZZ$ we have
        \[\| C_d^{\pm, m}  u \|_{L^q_t L^2_x} \lesa d^{-\frac{1}{q}} \| u \|_{V^2_{\pm,m}},\]
see \cite[Corollary 2.18]{Hadac2009}.
 We also require an additional auxiliary norm, which is used to obtain a high-low frequency gain in a particular case of the bilinear estimates appearing in sections \ref{sec:multi-sub} and \ref{sec:multi-crit}. Given $1<a<2$ and $b>0$ we define
	$$ \| u \|_{Y^{\pm, m}_{\lambda}} = \sup_{d \in 2^\ZZ} d^{\frac{1}{a}} \Big( \frac{\min\{d, \lambda\}}{\lambda} \Big)^{b}  \| C^{\pm, m}_d P_\lambda u \|_{L^a_t L^2_x}. $$
The parameters $a$ and $b$ are fixed later in sections \ref{sec:multi-sub} and \ref{sec:multi-crit},  but roughly we  take $\frac{1}{a}  - \frac{1}{2} \approx \sigma + s_0$, and $b \approx \frac{1}{a} - \frac{1}{2}$ where $s_0$ and $\sigma$ are as in \eqref{eqn:non resonant regime} or \eqref{eqn:resonant regime}. For  $s \in \RR$ and $\sigma > 0$, we  define our main function norms as
     \begin{align*} \| u \|_{\mb{V}^{s, \sigma}_{\pm, m}} &= \bigg( \sum_{\lambda, N \in 2^\NN} \lambda^{2s} N^{2\sigma} \| P_\lambda H_N u \|_{V^2_{\pm, m}}^2 \bigg)^\frac{1}{2}  \\
     \| u \|_{\mb{Y}^{s, \sigma}_{\pm, m}} &=\bigg( \sum_{\lambda, N \in 2^\NN} \lambda^{2s} N^{2\sigma} \|  H_N u \|_{Y^{\pm, m}_\lambda}^2 \bigg)^\frac{1}{2}
     \end{align*}
while if  $\sigma=0$ we take
   \begin{align*} \| u \|_{\mb{V}^{s, 0}_{\pm, m}} &= \bigg( \sum_{\lambda \in 2^\NN} \lambda^{2s}  \| P_\lambda u \|_{V^2_{\pm, m}}^2 \bigg)^\frac{1}{2}  \\
     \| u \|_{\mb{Y}^{s, 0}_{\pm, m}} &=\bigg( \sum_{\lambda \in 2^\NN} \lambda^{2s} \|  u \|_{Y^{\pm, m}_\lambda}^2 \bigg)^\frac{1}{2}.
     \end{align*}
Note that strictly speaking $\| \cdot \|_{\mb{Y}^{s, \sigma}_{\pm, m}}$ is not a norm, as $ \| \mc{U}^{\pm}_m(t) f \|_{\mb{Y}^{s, \sigma}_{\pm, m}} = 0$ for all $f \in H^s_{\sigma}$. We finally define
    $$ \| u \|_{\mb{F}^{s, \sigma}_{\pm, m} } = \| u \|_{\mb{V}^{s, \sigma}_{\pm, m}} + \| u \|_{\mb{Y}^{s,\sigma}_{\pm, m}}.$$
The wave component of the DKG equation is estimated in $\mb{V}^{s, \sigma}_{\pm, m}$. On the other hand, to control solutions to the Dirac equation, we define
    $$ \| \psi \|_{\mb{V}^{s,\sigma}_M} = \| \Pi_{+} \psi \|_{\mb{V}^{s,\sigma}_{+, M}} + \| \Pi_- \psi \|_{\mb{V}^{s, \sigma}_{-, M}}, \qquad \| \psi \|_{\mb{Y}^{s,\sigma}_M} = \| \Pi_{+} \psi \|_{\mb{Y}^{s,\sigma}_{+, M}} + \| \Pi_- \psi \|_{\mb{Y}^{s, \sigma}_{-, M}}$$
and
    $$ \| \psi \|_{\mb{F}^{s, \sigma}_M} = \| \psi \|_{\mb{V}^{s, \sigma}_M}  + \| \psi \|_{\mb{V}^{s, \sigma}_M}$$
The Banach space $\mb{V}^{s, \sigma}_{\pm, m}$ is then defined as the collection of all right continuous functions $u \in L^\infty_t H^s_\sigma$ such that $\| u \|_{\mb{V}^{s, \sigma}_{\pm, m}}<\infty$. The Banach spaces $\mb{V}^{s, \sigma}_{M}$ and $\mb{F}^{s, \sigma}_{M}$ are defined similarly. \\

We next localise the spaces constructed above to intervals. Let $I \subset \RR$ be a left-closed interval and take $\ind_I(t)$ to be the corresponding indicator function. Given a function $u$ on $I$, we make a harmless abuse of notation and think of $\ind_I(t) u(t)$ as a function defined on $\RR$. In other words $\ind_I u$ is the extension of $u$ by zero to a function on $\RR$. We then define $\mb{V}^{s,\sigma}_{\pm, m}(I)$ as the set of all right-continuous functions $u \in L^\infty_t H^s_\sigma(I\times \RR^3)$ such that $\ind_I u \in \mb{V}^{s, \sigma}_{\pm, m}$ with the obvious norm
    $$ \| u \|_{\mb{F}^{s,\sigma}_{\pm, m}(I)} = \| \ind_I u \|_{\mb{V}^{s, \sigma}_{\pm, m}}.$$
The Banach space $\mb{F}^{s, \sigma}_M(I)$, and the norms $\| \cdot \|_{\mb{F}^{s,\sigma}_{M}(I)}$ and $\| \cdot \|_{\mb{Y}^{s,\sigma}_{M}(I)}$ are defined analogously. Note that the existence of left-sided limits in $V^2$ immediately implies that if $u \in \mb{V}^{s, \sigma}_{\pm, m}(I)$ then there exists $f\in H^s_\sigma$ such that
      $\| u(t) - \mc{U}^\pm_m(t) f \|_{H^s_\sigma} \rightarrow 0$
as $t \rightarrow \sup I$. In particular, if $I=[t_0, t_1)$, $t_1<\infty$, and $\| u \|_{\mb{V}^{s, \sigma}_{\pm, m}(I)}<\infty$ then $u(t_1) \in H^s_{\sigma}$ is well-defined.

The following lemma shows that we may freely restrict the spaces to smaller intervals.
\begin{lemma}\label{lem:intervals disposable}
Let $M>0$, $s, \sigma \g 0$, and $I$ and $I'$ be left closed intervals with $I \subset I'$. If $\psi \in \mb{F}^{s, \sigma}_M(I')$ and $\phi \in \mb{V}^{s, \sigma}_{+, M}(I')$, then
        $$ \| \psi \|_{\mb{F}^{s, \sigma}_M(I)} \lesa \| \psi \|_{\mb{F}^{s, \sigma}_M(I')}, \qquad \qquad \| \phi \|_{\mb{V}^{s, \sigma}_{+, M}(I)} \lesa \| \phi \|_{\mb{V}^{s, \sigma}_{+, M}(I')}.$$
\end{lemma}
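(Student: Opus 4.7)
The plan is to reduce both estimates to a single assertion: multiplication by $\ind_J$, for an arbitrary left-closed interval $J \subset \RR$, defines a bounded operator on the global spaces $\mb{V}^{s,\sigma}_{\pm, m}$ and $\mb{Y}^{s,\sigma}_{\pm, m}$, with constant independent of $J$. Once this is established, the lemma follows immediately: since $I \subset I'$ we have $\ind_I = \ind_I \cdot \ind_{I'}$, so for the wave component
\[ \| \phi \|_{\mb{V}^{s,\sigma}_{+,M}(I)} = \| \ind_I \phi \|_{\mb{V}^{s,\sigma}_{+,M}} = \| \ind_I (\ind_{I'} \phi) \|_{\mb{V}^{s,\sigma}_{+,M}} \lesa \| \ind_{I'} \phi \|_{\mb{V}^{s,\sigma}_{+,M}} = \| \phi \|_{\mb{V}^{s,\sigma}_{+,M}(I')}, \]
and the argument for $\mb{F}^{s,\sigma}_M(I)$ is identical after decomposing $\psi = \Pi_+ \psi + \Pi_- \psi$, noting that the Dirac projectors $\Pi_\pm$ are spatial Fourier multipliers and hence commute with $\ind_I(t)$.

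The spatial operators $P_\lambda$, $H_N$ and the pullback $\mc{U}^\pm_m(-t)$ (which is pointwise in $t$) all commute with multiplication by the time-only function $\ind_J(t)$, so the claim reduces, frequency piece by frequency piece, to (a) $\| \ind_J u \|_{V^2} \lesa \|u\|_{V^2}$ and (b) $\| \ind_J u \|_{Y^{\pm, m}_\lambda} \lesa \|u\|_{Y^{\pm,m}_\lambda}$. For (a), I would use the partition characterization of $V^2$: given any partition $(t_j) \in \mc{Z}$, classify each index $j$ according to whether $t_{j-1}, t_j$ both lie in $J$ (these contributions are trivially bounded by $|u|_{V^2}^2$), both lie outside $J$ (contribution $0$), or exactly one lies in $J$. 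There are at most two indices of the last kind, since $J$ is an interval, and each such term is bounded by $\|u\|_{L^\infty_t L^2_x}^2$. Combining with $\|\ind_J u\|_{L^\infty_t L^2_x} \le \|u\|_{L^\infty_t L^2_x}$ gives (a).

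For (b), the bound is more delicate because the space–time Fourier multiplier $C^{\pm, m}_d P_\lambda$ does not commute with $\ind_J(t)$. My approach would be to exploit the embedding $V^a_{\pm, m} \hookrightarrow Y^{\pm, m}_\lambda$ that follows from $\| C^{\pm, m}_d P_\lambda u\|_{L^a_t L^2_x} \lesa d^{-1/a} \| u \|_{V^a_{\pm, m}}$ (the analogue of the $V^2 \hookrightarrow L^q_t L^2_x$ bound stated before \cite{Hadac2009}), combined with the fact that the same partition argument as in (a) gives $\|\ind_J u\|_{V^a_{\pm, m}} \lesa \|u\|_{V^a_{\pm,m}}$. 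A more direct alternative is to decompose $\ind_J u = \sum_{d'} \ind_J C^{\pm, m}_{d'} u$, observe that on the time-Fourier side $\ind_J$ is a convolution kernel with polynomial decay in $\tau$, and sum almost-orthogonally in $d, d'$.

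The main obstacle is the $Y$-part of (b): the sharp time cutoff $\ind_J$ interacts badly with the modulation localization, and one needs either the $V^a$-detour described above or a careful kernel estimate to control the resulting off-diagonal terms. The $V^2$-part (a) and the commutation with $P_\lambda, H_N, \Pi_\pm, \mc{U}^\pm_m(-t)$ are standard and essentially bookkeeping.
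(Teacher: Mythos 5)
Your reduction to a single time-cutoff boundedness claim, and your handling of the $V^2$ part (a) via the partition characterization, are correct and match the paper's proof (which bounds $\|\ind_I u\|_{V^2} \le 2\|u\|_{V^2}$ by exactly this count of endpoint indices). The commutation of $\ind_J(t)$ with $P_\lambda$, $H_N$, $\Pi_\pm$, and $\mc{U}^\pm_m(-t)$ is also fine.

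The gap is in part (b). The bound $\|\ind_J u\|_{Y^{\pm,m}_\lambda} \lesa \|u\|_{Y^{\pm,m}_\lambda}$ \emph{cannot hold}: as the paper points out immediately after defining the spaces, $\|\cdot\|_{\mb{Y}^{s,\sigma}_{\pm,m}}$ is only a seminorm, with $\|\mc{U}^\pm_m(t)f\|_{\mb{Y}^{s,\sigma}_{\pm,m}}=0$ for every $f\in H^s_\sigma$. A truncated free solution $\ind_J \mc{U}^\pm_m(t)f$ certainly does not have vanishing $Y$-norm (the sharp cutoff spreads the modulation), so no inequality of the form you propose can be true. What the paper actually proves is the weaker, but sufficient, statement
$$\|\ind_J u\|_{Y^{\pm,M}_\lambda} \lesa \|P_\lambda u\|_{L^\infty_t L^2_x} + \|u\|_{Y^{\pm,M}_\lambda},$$
which is then absorbed into the $\mb{F}$-norm because the $L^\infty_t L^2_x$ piece is controlled by the $V^2$ component. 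Any correct proof of (b) must generate such an extra $L^\infty$ (or $V^2$) term.

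Your proposed $V^a$-detour does not repair this. Even granting the estimate $\|C^{\pm,m}_d P_\lambda u\|_{L^a_t L^2_x}\lesa d^{-1/a}\|u\|_{V^a_{\pm,m}}$ and the boundedness of $\ind_J$ on $V^a$, this would give $\|\ind_J u\|_{Y^{\pm,m}_\lambda}\lesa \|u\|_{V^a_{\pm,m}}$. Since here $1<a<2$, one has $V^a\subsetneq V^2$ with $\|u\|_{V^2}\le\|u\|_{V^a}$, so the $V^a$-norm is \emph{stronger} than $V^2$ and is not controlled by $\|u\|_{\mb{F}^{s,\sigma}_M}$. You would be proving a bound by a quantity the lemma gives you no control over. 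Your "more direct alternative" (decompose into modulations $d'$ and use decay of $\widehat{\ind_J}$) is in fact the right idea, and it is essentially what the paper does: they write $\ind_I=(\ind_I-P^{(t)}_{\ll d}\ind_I)+P^{(t)}_{\ll d}\ind_I$ with time-frequency projections $P^{(t)}_{\ll d}$, show the first piece is $O(d^{-1/a})$ in $L^a_t$ (producing the $L^\infty_t L^2_x$ term via H\"older) and the second is uniformly bounded in $L^\infty_t$ so preserves the modulation scale. This is the step you would need to execute; it is indeed the crux of the lemma, as you suspected.
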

\begin{proof}
  By taking differences, exploiting translation invariance, and unpacking the definitions of the spaces $\mb{F}^{s, \sigma}_M(I)$ and $\mb{V}^{s, \sigma}_{+, M}(I)$, it suffices to show that
        \begin{equation}\label{eqn:proof of lem intervals disp:V2 bound}
            \| \ind_I u \|_{V^2} \les 2 \| u \|_{V^2}
        \end{equation}
  and for any $\lambda \in 2^\NN$
        \begin{equation}\label{eqn:proof of lem intervals disp:Y bound}
            \| \ind_I u \|_{Y^{\pm, M}_\lambda} \lesa \| P_\lambda u \|_{L^\infty_t L^2_x} + \| u \|_{Y^{\pm, M}_\lambda}
        \end{equation}
  with $I=[0, \infty)$. The first inequality follows by noting that for any increasing sequence $(t_j)_{ j \in \ZZ}$ we have
    $$ \sum_{ j \in \ZZ} \| (\ind_I u)(t_j) - (\ind_I u)(t_{j-1})\|_{L^2_x(\RR^3)}^2 \les \| u \|_{L^\infty_t L^2_x}^2 + | u |_{V^2_{\pm, M}}^2= \| u\|_{V^2_{\pm, M}}^2.$$
  On the other hand, to prove \eqref{eqn:proof of lem intervals disp:Y bound}, we start by defining the time frequency localisation operators $P^{(t)}_{d} = \rho(\frac{|-i\p_t|}{d})$ and $P^{(t)}_{\les d} = \rho_{\les 1}(\frac{|-i\p_t|}{d})$, where $\rho$ is as in the definition of the $C_d^{M, \pm}$ and $P_\lambda$ multipliers. Suppose for the moment that we have the bounds
        \begin{equation}\label{eqn:proof of lem intervals disp:ind bounds}
            \| \ind_I - P^{(t)}_{\ll d} \ind_I \|_{L^a_t}\lesa d^{-\frac{1}{a}}, \qquad \qquad \| P^{(t)}_{\ll d} \ind_I \|_{L^\infty_t} \lesa 1.
        \end{equation}
  Let $F(t) = \mc{U}^M_\pm(-t) u$. The identity $C^{\pm, M}_d = \mc{U}^{M}_\pm(t) P^{(t)}_d \mc{U}^M_{\pm}(-t)$, together with
  \eqref{eqn:proof of lem intervals disp:ind bounds} and  the fact that the free solution propagators are unitary on $L^2_x(\RR^3)$, implies that
     \begin{align*} \| C^{\pm, M}_d ( \ind_I u) \|_{L^a_t L^2_x} &= \| P^{(t)}_{d}( \ind_I F) \|_{L^a_t L^2_x} \\
            &\les \| P^{(t)}_{d}( [\ind_I - P^{(t)}_{\ll d} \ind_I] F) \|_{L^a_t L^2_x} + \| P^{(t)}_{d}( P_{\ll d}^{(t)} \ind_I F) \|_{L^a_t L^2_x}.\\
            &\lesa \| [\ind_I - P^{(t)}_{\ll d} \ind_I] F \|_{L^a_t L^2_x} + \sup_{d'\approx d} \|  P_{\ll d}^{(t)} \ind_I P^{(t)}_{d'} F \|_{L^a_t L^2_x}\\
            &\lesa d^{-\frac{1}{a}} \| u \|_{L^\infty_t L^2_x} +  \sup_{d'\approx d} \| C^{\pm, M}_{d'} u \|_{L^a_t L^2_x}.
     \end{align*}
  Thus, by definition of the $Y^{\pm, M}_{\lambda}$ norm, the required bound \eqref{eqn:proof of lem intervals disp:Y bound} follows. Consequently it only remains to prove the bounds \eqref{eqn:proof of lem intervals disp:ind bounds}. To this end, the definition of the $P^{(t)}_{\ll d}$ multipliers implies that there exists a rapidly decreasing function $\sigma \in C^\infty(\RR)$ with $\int_\RR \sigma(t) dt  = 1$ such that
        $$ P^{(t)}_{\ll d} \ind_I(t) = \int_0^\infty \sigma( t d -s) ds .$$
  Hence the second estimate in \eqref{eqn:proof of lem intervals disp:ind bounds} is immediate. On the other hand, for the first term, we observe that the rapid decay of $\sigma$ gives
    \begin{align*}
      \Big| \ind_I(t)  - P^{(t)}_{\ll d} \ind_I (t) \Big| &= \Big| \ind_I(t) \int_\RR \sigma( t d - s) ds   - \int_0^\infty\sigma(td - s) \Big|\\
                                    &\les \Big| \int_\RR \sigma( |t| d + |s|) ds \Big| + \Big| \int_\RR \sigma( - |t| d -|s|) ds \Big| \\
                                    &\lesa \lr{td}^{-10}
    \end{align*}
  and hence \eqref{eqn:proof of lem intervals disp:ind bounds} follows.
\end{proof}

A straight forward computation implies that linear solutions belong to the spaces $\mb{F}^{s, \sigma}_M$ and $\mb{V}^{s, \sigma}_{+, m}$. Lemma \ref{lem:intervals disposable} implies that for any $s, \sigma \g 0$ and $t_0 \in I \subset \RR$ we have
    \begin{equation}\label{eqn:free solns bounded}
        \| \mc{U}_M(t-t_0) \psi_0 \|_{\mb{F}^{s, \sigma}_{M}(I)} \lesa \| \psi_0 \|_{H^s_\sigma}, \qquad \| \mc{U}^+_m(t-t_0) \phi_0 \|_{\mb{V}^{s, \sigma}_{+, m}(I)} \lesa \|\phi_0\|_{H^s_\sigma}.
    \end{equation}
The Strichartz type spaces are also controlled by the $\mb{F}^{s, \sigma}_M$ and $\mb{V}^{s, \sigma}_{+, m}$ norms. We give a more precise version of this statement in Section \ref{sec:prelim}, and for the moment we simply recall that we have the bounds
    \begin{equation}\label{eqn:norms control L4}
        \begin{split}
            \| \psi \|_{L^\infty_t H^s_\sigma(I\times \RR^3)} + \| \psi \|_{\mb{D}^{s-\frac{1}{2}}_\sigma(I)} &\lesa \|\psi \|_{\mb{V}^{s, \sigma}_M(I)} \\
            \| \phi \|_{L^\infty_t H^{s+\frac{1}{2}}_\sigma(I\times \RR^3)} + \| \phi \|_{\mb{D}^s_\sigma(I)} &\lesa \| \phi \|_{\mb{V}^{s+\frac{1}{2}, \sigma}_{+, m}(I)}
        \end{split}
    \end{equation}
These estimates follow directly from the fact that the estimates for the free solutions immediately imply bounds in the corresponding $V^2$ space, the details can be found in, for instance, \cite{Hadac2009}. We also need to understand how the norm $\| \cdot \|_{\mb{F}^{s, \sigma}_M(I)}$ depends on the interval $I$. Clearly, if $I$ is left-closed, and we can write $I=I_1\cup I_2$ with $I_1$, $I_2$ disjoint left-closed intervals, then by the triangle inequality we have the bound
    \begin{equation}\label{eqn:sum intervals bound V}
        \| u \|_{\mb{V}^{s, \sigma}_{\pm, m}(I)} = \| \ind_I(t) u \|_{\mb{V}^{s, \sigma}_{\pm, m}} \les \|\ind_{I_1}(t) u \|_{\mb{V}^{s,\sigma}_{\pm, m}} +  \|\ind_{I_2}(t) u \|_{\mb{V}^{s,\sigma}_{\pm, m}} =  \| u \|_{\mb{V}^{s, \sigma}_{\pm, m}(I_1)}+ \| u \|_{\mb{V}^{s, \sigma}_{\pm, m}(I_2)}.
    \end{equation}
An identical argument gives
    \begin{equation}\label{eqn:sum intervals bound F}
        \| u \|_{\mb{F}^{s, \sigma}_{M}(I)} \les   \| u \|_{\mb{F}^{s, \sigma}_{M}(I_1)}+ \| u \|_{\mb{F}^{s, \sigma}_{M}(I_2)}.
    \end{equation}

\section{Local theory for the Dirac-Klein-Gordon system}\label{sec:dkg}
  In this section, we derive two key consequences of the bilinear estimates obtained in Section \ref{sec:multi-sub} and Section \ref{sec:multi-crit}, namely Theorem \ref{thm:lwp} and Theorem \ref{thm:main-det}. These theorems show that the time of existence of solutions to the DKG system can be controlled by the $L^4_{t,x}$ norm, or, more precisely, by the $\| \cdot \|_{\mb{D}^0_\sigma}$ norms. In particular,  we refine the previous local and small data global results obtained in \cite{Candy2016,Bejenaru2015}. The proof of Theorem \ref{thm:cond-dkg} is then an easy consequence.

We now start with the local theory. By time reversibility, it is enough to consider the forward in time problem, thus we always work on left-closed intervals $I=[t_0, t_1)$ where potentially we may have $t_1 = \infty$. In addition, instead to the second order system \eqref{eqn:DKG}, we prefer to work with a first order system. More precisely, we construct $\psi: I\times \RR^3 \rightarrow \CC^4$ and $\phi_+: I\times \RR^3\rightarrow \CC$ solving the first order system
    \begin{equation}\label{eqn:dkg reduced}
        \begin{split}
          -i \gamma^\mu \p_\mu \psi + M\psi &= \Re(\phi_+) \psi \\
           - i \p_t \phi_+ + \lr{\nabla}_m \phi_+ &= \lr{\nabla}_m^{-1}\big(\overline{\psi}\psi\big).
        \end{split}
    \end{equation}
If we let $\phi = \Re(\phi_+)$, a standard computation using the fact that $\overline{\psi}\psi \in \RR$ then shows that, at least for classical solutions, $(\psi, \phi)$ is a solution to the original Dirac-Klein-Gordon system (\ref{eqn:DKG}).

We next clarify precisely what we mean by solutions, and maximal solutions.
\begin{definition}\label{def:sol}
Let $s,\sigma\in \RR$.
\begin{enumerate}
\item We say $(\psi,\phi_+):I\times \RR^3\to \CC^4\times \RR $ is a $H^s_\sigma$-strong solution on an interval $I\subset \RR$, if
\[(\psi,\phi_+)\in C\big(I,H^s_\sigma(\RR^3,\CC^4)\times H^{s+\frac{1}{2}}_\sigma(\RR^3,\CC)\big)\]
and there exists a sequence $(\psi_n,\phi_n)\in C^{2}(I,H^{m}(\RR^3,\CC^4\times \CC ))$, $m=\max\{10,s\}$, of classical solutions to \eqref{eqn:dkg reduced} such that, for any compact $I'\subseteq I$,
\[
\sup_{t \in I'}\Big(\|\psi(t)-\psi_n(t)\|_{H^s_\sigma(\RR^3,\CC^4)}+\|\phi_+(t)-\phi_n(t)\|_{H^{\frac12+s}_\sigma(\RR^3,\CC)}\Big)\to 0.
\]
\item We say $(\psi,\phi_+):[t_0,t^\ast)\times \RR^3\to \CC^4\times \CC$ is a (forward) maximal $H^s_\sigma$-solution if the following two properties hold:
\begin{enumerate}
\item For any $t_1\in (t_0,t^\ast)$,  $(\psi, \phi_+)$ is a strong $H^s_\sigma$-solution on $[t_0,t_1)$,
\item If $(\psi',\phi'_+):I\times \RR^3\to \CC^4\times \CC$ is a strong $H^s_\sigma$-solution on an interval $I$ satisfying $I\cap [t_0,t^\ast)\not=\varnothing$ and $( \psi',\phi'_+)=(\psi,\phi_+)$ on $I\cap [t_0,t^\ast)$, then $I\cap[t_0,\infty)\subseteq [t_0,t^\ast)$.\\
\end{enumerate}
\end{enumerate}
\end{definition}

We remark that $H^s_\sigma$-strong solutions are unique, owing to the fact classical solutions are unique. Moreover, the local theory we develop below implies that an $H^{s_0}_\sigma$-strong solution to \eqref{eqn:dkg reduced} on an interval $I$, is also \emph{locally} in $\mb{F}^{s_0, \sigma}_M \times \mb{V}^{s_0+\frac{1}{2}, \sigma}_{+, m}$. Furthermore, we also show that if we have control over both the $L^4_{t,x}$ type norm $\| \psi \|_{\mb{D}^0_{\sigma}(I)}$ \emph{and} the data norm $\| (\psi, \phi_+)(t_0) \|_{H^{s_0}_\sigma\times H^{s_0+\frac{1}{2}}_\sigma}$, then in fact $(\psi, \phi_+) \in \mb{F}^{s_0, \sigma}_M(I) \times \mb{V}^{s_0 + \frac{1}{2}, \sigma}_{+, m}(I)$. In view of the existence of right and left limits of elements of $V^2$, this second fact is, roughly speaking, simply a restatement of Theorem \ref{thm:cond-dkg}. We give a more detailed description of this argument in Section \ref{sec:proof of thm cond-dkg}.\\

We state the bilinear estimates that we exploit in the following.
  \begin{theorem}\label{thm:main bilinear}
Let $s_0, \sigma \g 0$ and $M, m>0$ satisfy either (\ref{eqn:resonant regime}) or (\ref{eqn:non resonant regime}). There exists $1<a<2$, $b>0$, and $C>0$ such that if $I\subset \RR$ is a left-closed interval,  $t_0 \in I$, and $\phi_+ \in \mb{V}^{s_0+\frac{1}{2},\sigma}_{m, +}(I)$, $\psi, \varphi \in \mb{F}^{s_0,\sigma}_M(I)$, then we have the bounds
    $$ \big\| \mc{I}^M_{t_0}\big( \Re(\phi_+) \gamma^0 \psi \big) \big\|_{\mb{V}^{s_0, \sigma}_{M}(I)} \les C \Big( \| \phi_+ \|_{\mb{D}^0_\sigma(I)} \|\psi\|_{\mb{D}^{-\frac{1}{2}}_\sigma(I)} \Big)^\theta \Big( \| \phi_+ \|_{\mb{V}^{s_0+\frac{1}{2}, \sigma}_{+, m}(I)} \| \psi \|_{\mb{F}^{s_0, \sigma}_{M}(I)}\Big)^{1-\theta}$$
and
     $$ \big\| \mc{I}^M_{t_0}\big( \Re(\phi_+) \gamma^0 \psi \big) \big\|_{\mb{Y}^{s_0, \sigma}_{M}(I)} \les C\Big( \| \phi_+ \|_{\mb{D}^0_\sigma(I)} \|\psi\|_{\mb{D}^{-\frac{1}{2}}_\sigma(I)} \Big)^\theta \Big( \| \phi_+ \|_{\mb{V}^{s_0+\frac{1}{2}, \sigma}_{+, m}(I)} \| \psi \|_{\mb{V}^{s_0, \sigma}_{M}(I)}\Big)^{1-\theta}$$
and
     $$ \big\| \mc{I}^{m, +}_{t_0}\big( \lr{\nabla}_m^{-1} ( \overline{\psi} \varphi) \big) \big\|_{\mb{V}^{s_0+\frac{1}{2}, \sigma}_{+, m}(I)} \les C \Big( \| \psi \|_{\mb{D}^{-\frac{1}{2}}_\sigma(I)} \|\varphi\|_{\mb{D}^{-\frac{1}{2}}_\sigma(I)} \Big)^\theta \Big( \| \psi \|_{\mb{F}^{s_0, \sigma}_{M}(I)} \| \varphi \|_{\mb{F}^{s_0, \sigma}_{M}(I)}\Big)^{1-\theta}.$$
Moreover, for any $s\g s_0$,  we have the fractional Leibniz type bounds
    $$ \big\| \mc{I}^M_{t_0}\big( \Re(\phi_+) \gamma^0\psi \big) \big\|_{\mb{F}^{s, \sigma}_{M}(I)} \les 2^s C \Big( \| \phi_+ \|_{\mb{D}^0_\sigma(I)}^\theta \| \phi_+ \|_{\mb{V}^{s_0+\frac{1}{2}, \sigma}_{+, m}(I)}^{1-\theta}  \| \psi \|_{\mb{F}^{s, \sigma}_{M}(I)} + \| \phi_+ \|_{\mb{V}^{s+\frac{1}{2}, \sigma}_{+, m}(I)} \|\psi\|_{\mb{D}^{-\frac{1}{2}}_\sigma(I)}^\theta  \| \psi \|_{\mb{F}^{s_0, \sigma}_{M}(I)}^{1-\theta}\Big)$$
and
    $$ \big\| \mc{I}^{m, +}_{t_0}\big( \lr{\nabla}_m^{-1} ( \overline{\psi} \varphi) \big) \big\|_{\mb{V}^{s+\frac{1}{2}, \sigma}_{+, m}(I)} \les 2^s C  \Big(\| \psi \|_{\mb{D}^{-\frac{1}{2}}_\sigma(I)}^\theta  \| \psi \|_{\mb{F}^{s_0, \sigma}_{M}(I)}^{1-\theta} \| \varphi \|_{\mb{F}^{s, \sigma}_{M}(I)} +\| \psi \|_{\mb{F}^{s, \sigma}_{M}(I)} \|\varphi\|_{\mb{D}^{-\frac{1}{2}}_\sigma(I)}^\theta \| \varphi \|_{\mb{F}^{s_0, \sigma}_{M}(I)}^{1-\theta}\Big).$$
\end{theorem}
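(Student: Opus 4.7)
The plan is to follow the atomic/duality framework used in \cite{Bejenaru2015, Candy2016}: since $V^2_{\pm, m}$ embeds into the dual of the atomic space $U^2_{\pm, m}$, the $\mb{V}^{s_0, \sigma}$ bounds on the Duhamel integrals reduce to trilinear form estimates of the shape
$$\Big| \int_{\RR^{1+3}} \Re(\phi_+)\, \overline{\varphi}\, (\gamma^0 \psi) \, dtdx \Big| \lesa \mc{N}(\phi_+, \psi, \varphi),$$
where $\varphi$ runs over suitably normalised $U^2_M$-atoms (and similarly for the Klein-Gordon estimate). One then performs a Littlewood-Paley and angular decomposition on each factor, bounds each resulting frequency-localised trilinear piece separately, and sums dyadically using orthogonality and Schur-type lemmas. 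The $\mb{Y}^{s_0,\sigma}$ estimate is instead obtained directly from the definition: each $C^{\pm, M}_d$-projection of the Duhamel integral is bounded in $L^a_t L^2_x$ with the modulation weight $d^{1/a}$ controlled by distributing modulations across the three factors in the standard way.

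Having reduced to frequency-localised trilinear bounds, the central step is to exhibit, for each such piece, two complementary estimates. The first is the \emph{pure} critical bilinear bound already proved in \cite{Bejenaru2015, Candy2016}, which controls the trilinear form by a product of $V^2_{\pm, m}$-type norms with $Y^{\pm, m}_\lambda$ corrections in the high-low regime. The second, which is the novel ingredient, exploits the sharp bilinear adjoint Fourier restriction estimates of \cite{Candy2017} recorded in Theorem \ref{thm:bilinear small scale KG}: when two of the three factors are (localised to) free solutions with separated or transversal frequency supports, their product is better than the elementary Cauchy-Schwarz bound in $L^2_{t,x}$; a H\"older step then lets one replace a factor of $V^2$ by a factor of $L^4_{t,x}$, i.e.\ by a $\mb{D}$-norm. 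Interpolating the two estimates at each dyadic block with a small exponent $\theta>0$ produces the claimed $(\mb{D})^\theta (V^2)^{1-\theta}$ bound. The exponent $\theta$ must be taken small enough for the bilinear restriction gain to dominate the residual $L^4_{t,x}$ Bernstein losses and to preserve the $\ell^2$-summability of the dyadic sums.

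The fractional Leibniz bounds follow from the two basic bilinear estimates by the standard frequency-localisation argument: decomposing $P_\mu$ of the product dyadically, only the high-high-to-low and high-low-to-high interactions contribute, and in each regime the multiplier $\lr{\nabla}^s$ effectively falls on the factor of highest frequency. Applying the $s_0$-level estimate to the lower-regularity factor and paying $\lambda^s$ on the high-frequency one produces exactly the two asymmetric terms, one for each choice of which factor carries the full $s$-regularity.

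The main technical obstacle will be to ensure that the $\mb{D}$-gain survives in the worst frequency configurations. These are the high-high-to-low interactions in the resonant regime \eqref{eqn:resonant regime}, where the output frequency is much smaller than the inputs and the transversality between the characteristic surfaces $\tau \pm \lr{\xi}_M = 0$ and $\tau + \lr{\xi}_m = 0$ degenerates; there one must use angular regularity, hence the $H_N$-localisations and $\sigma>0$, to recover an off-diagonal gain that survives the interpolation. A secondary difficulty is compatibility with the modulation-weighted $Y^{\pm, m}_\lambda$-norm: the parameters $1<a<2$ and $b>0$ must be tuned so that the $d^{-1/a}$ modulation losses can be absorbed uniformly across all frequency/modulation regimes, which is presumably why the bilinear proof is split into the separate Sections \ref{sec:multi-sub} and \ref{sec:multi-crit}, handling the subcritical and critical regimes respectively.
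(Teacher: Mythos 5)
Your overall strategy mirrors the paper's: the statement is reduced (by rescaling to $m=1$ and the interval-restriction Lemma~\ref{lem:intervals disposable}) to the global multilinear estimates of Theorems~\ref{thm:duhamel-sub} and~\ref{thm:duhamel-crit}, which are in turn proved via the duality reduction~\eqref{eqn:energy ineq for V2}, frequency- and modulation-localised trilinear bounds, and interpolation between a ``pure'' $V^2$-type bound and one producing a small power of an $L^4_{t,x}$ norm; the Leibniz-type bounds then follow by the usual asymmetric high/low frequency bookkeeping, and the $\mb{Y}^{s,\sigma}$ estimate is obtained by $L^p$-interpolation in the modulation variable, exactly as you indicate. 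So at the structural level the proposal is faithful.

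Two details in your description are, however, off. First, you locate the hard resonant configuration in the high-high-to-low interaction ($\mu \ll \lambda_1 \approx \lambda_2$), ``where the output frequency is much smaller than the inputs.'' In fact, in the high-high-to-low interaction the constraint $\xi-\eta \in \supp\widehat{\phi}$ forces the two Dirac frequencies $\xi,\eta$ to be nearly (anti)parallel, so the $\overline{\psi}\varphi$ null structure is strong there and this case is benign. The genuinely resonant configuration, where $\mathfrak{M}_{\pm_1,\pm_2}$ can vanish and no lower bound on the modulation is available, is the \emph{balanced} case $\mu\approx\lambda_1\approx\lambda_2$ with the $(+,-)$ sign choice and $0<2M<m$ (Case~3 of the proof of Theorem~\ref{thm:tri freq loc crit}); it is precisely there that the angular regularity $\sigma>0$ and the bilinear restriction input, Theorem~\ref{thm:bilinear small scale KG}, are indispensable. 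Second, the $L^4_{t,x}$ gain is not obtained in a uniform way from the bilinear restriction estimate: in the subcritical regime (Section~\ref{sec:multi-sub}) it comes from elementary $L^p$-interpolation between Klein--Gordon Strichartz estimates and the trivial $L^4\times L^2\times L^4$ H\"older bound, supplemented by the ``cheap'' bilinear Theorem~\ref{thm:cheap bilinear}; the adjoint restriction bound from~\cite{Candy2017} is invoked only in the critical regime (Lemma~\ref{lem:tri L4 gain} and Case~3 above). These corrections do not change the overall plan, but following the proposal as written would lead you to concentrate the angular-regularity machinery in the wrong frequency regime.
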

\begin{proof}
First, suppose that $I=\RR$. By rescaling, we may assume $m=1$. The bounds are immediate consequences of Theorem \ref{thm:duhamel-sub} (in the case \eqref{eqn:non resonant regime}) and Theorem \ref{thm:duhamel-crit} (in the case \eqref{eqn:resonant regime}), because of $2\Re(\phi_+)=\phi_++\phi_+^\ast$, as well as the fact that for any $s'$ and $m_1\g 0$ we have $\| \cdot \|_{\mb{D}^{s'}_\sigma}\lesa \| \cdot \|_{\mb{V}^{s'+\frac{1}{2}, \sigma}_{\pm, m_1}}$.

Finally, we remark that Lemma \ref{lem:intervals disposable} implies the claims for arbitrary left-closed intervals $I$ with $t_0\in I$, since we have
\[
\ind_I\mc{I}^M_{t_0}\big( \Re(\phi_+) \gamma^0 \psi \big)=\ind_I\mc{I}^M_{t_0}\big( \Re(\ind_I \phi_+) \gamma^0 \ind_I\psi \big),
\]
and similarly for $\mc{I}^{m, +}_{t_0}\big( \lr{\nabla}_m^{-1} ( \overline{\psi} \varphi) \big)$.
\end{proof}

Note that we have elected to separate the statement of the bounds in the $\mb{F}^{s_0, \sigma}_{M}$ for spinor components into a $\mb{V}^{s_0, \sigma}_{M}$ bound, and a $\mb{Y}^{s_0, \sigma}_M$ bound. The motivation is that although the bound in the $\mb{V}^{s_0, \sigma}_{M}$ requires $\psi \in \mb{F}^{s_0, \sigma}_{M}$, the control the of the $\mb{Y}^{s_0, \sigma}_M$ norm only requires $\psi \in \mb{Y}^{s_0, \sigma}_M$. Consequently, if required, by using a two step iteration argument, it is possible to develop a local theory using just the $V^2$ type norms, without explicitly using the $\mb{Y}^{s_0, \sigma}_M$ spaces. Another way of stating this, is that for \emph{solutions} to the DKG system, if $\psi \in \mb{V}^{s_0, \sigma}_M$, then we immediately have $\psi \in \mb{F}^{s_0, \sigma}_{M}$. \\

We now give a precise version of the local well-posedness (and small data global well-posedness) theory that follows from the bilinear estimates in Theorem \ref{thm:main bilinear}.

\begin{theorem}[Local well-posedness]\label{thm:lwp}
Let $s_0, \sigma \g 0$ and $M, m>0$ satisfy either (\ref{eqn:resonant regime}) or (\ref{eqn:non resonant regime}). There exist $\theta \in (0,1)$ and $C>1$, such that if
    $$ A, B >0, \qquad 0<\alpha \les (C A^{1-\theta})^{-\frac{1}{\theta}}, \qquad 0<\beta \les (C B^{1-\theta})^{-\frac{1}{\theta}},$$
and $I\subset \RR$ is a left closed interval, then for any initial time $t_0\in I$, and any data $(\psi_0, \phi_0) \in H^{s_0}_\sigma \times H^{s_0+\frac{1}{2}}_\sigma$ satisfying
$$ \|\psi_0\|_{H^{s_0}_\sigma}< A, \qquad \|\mathcal{U}_{M}(t-t_0)\psi_0\|_{\mb{D}^{-\frac{1}{2}}_\sigma(I)}< \alpha, $$
and
$$\|\phi_0\|_{H^{s_0+\frac12}_\sigma }< B, \qquad \|\mc{U}_{m}^+(t-t_0) \phi_0\|_{\mb{D}^0_\sigma(I)}< \beta,$$
there exists a unique $H^{s_0}_\sigma$-strong solution $(\psi,\phi_+)$ of \eqref{eqn:dkg reduced} on $I$ with $(\psi, \phi_+)(t_0)=(\psi_0, \phi_0)$. Moreover,  the data-to-solution map is Lipschitz-continuous into $\mb{F}^{s_0, \sigma}_M(I) \times \mb{V}^{s_0, \sigma}_{+, m}(I)$ and we have the bounds
\begin{align*}
\|\psi - \mathcal{U}_M(t-t_0)\psi_0\|_{\mathbf{F}^{s_0, \sigma}_M(I)} &\les C\alpha^\theta A^{1-\theta}(\alpha^\theta A^{1-\theta}+\beta^\theta B^{1-\theta}) \\
\|\phi_+ - \mathcal{U}^+_m(t-t_0)\phi_0\|_{\mathbf{V}^{s_0+\frac{1}{2}, \sigma}_{+, m}} &\les C\big(\alpha^\theta A^{1-\theta}\big)^2.
\end{align*}
Finally, if we have additional regularity $(\psi_0,\phi_0)\in H^{s}_\sigma \times H^{s+\frac12}_\sigma$ for some $s>s_0$, then $(\psi,\phi_+) \in F^{s, \sigma}_M(I) \times \mb{V}^{s, \sigma}_{+, m}(I)$ is also a $H^s_\sigma$-strong solution.
\end{theorem}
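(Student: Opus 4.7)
The plan is a Picard iteration in the Duhamel formulation of \eqref{eqn:dkg reduced}, driven by Theorem \ref{thm:main bilinear}. The key structural feature I will exploit is that each bilinear bound carries a positive power $\theta$ on the $\mb{D}$-type Strichartz norms of the inputs. This ensures the smallness parameters $\alpha,\beta$ produce smallness in the nonlinear contribution even when the underlying data norms $A,B$ are large, which is exactly what is needed to close the iteration under the calibrated hypotheses $\alpha \les (CA^{1-\theta})^{-1/\theta}$ and $\beta \les (CB^{1-\theta})^{-1/\theta}$.

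Concretely, I would set up the Duhamel map $T=(T_1,T_2)$ by
\begin{align*}
T_1(\psi,\phi_+)(t) &= \mc{U}_M(t-t_0)\psi_0 + \mc{I}^M_{t_0}\bigl(\Re(\phi_+)\gamma^0\psi\bigr)(t),\\
T_2(\psi,\phi_+)(t) &= \mc{U}^+_m(t-t_0)\phi_0 + \mc{I}^{+,m}_{t_0}\bigl(\lr{\nabla}_m^{-1}(\overline{\psi}\psi)\bigr)(t),
\end{align*}
and search for a fixed point in the closed ball $X\subset \mb{F}^{s_0,\sigma}_M(I)\times\mb{V}^{s_0+\frac{1}{2},\sigma}_{+,m}(I)$ defined by $\|\psi-\mc{U}_M(\cdot-t_0)\psi_0\|_{\mb{F}^{s_0,\sigma}_M(I)}\les R_1$ and $\|\phi_+-\mc{U}^+_m(\cdot-t_0)\phi_0\|_{\mb{V}^{s_0+\frac{1}{2},\sigma}_{+,m}(I)}\les R_2$, with $R_1=C\alpha_*(\alpha_*+\beta_*)$ and $R_2=C\alpha_*^2$ where $\alpha_*=\alpha^\theta A^{1-\theta}$ and $\beta_*=\beta^\theta B^{1-\theta}$. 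For $(\psi,\phi_+)\in X$, the free-solution bound \eqref{eqn:free solns bounded} and the Strichartz embedding \eqref{eqn:norms control L4} translate the ball conditions into $\|\psi\|_{\mb{D}^{-1/2}_\sigma}\lesa \alpha+R_1$, $\|\phi_+\|_{\mb{D}^0_\sigma}\lesa \beta+R_2$, $\|\psi\|_{\mb{F}^{s_0,\sigma}_M}\lesa A+R_1$, $\|\phi_+\|_{\mb{V}^{s_0+\frac{1}{2},\sigma}_{+,m}}\lesa B+R_2$. Plugging these into the first three bounds of Theorem \ref{thm:main bilinear} (summing the $\mb{V}^{s_0,\sigma}_M$ and $\mb{Y}^{s_0,\sigma}_M$ estimates to control the $\mb{F}^{s_0,\sigma}_M$ norm) and using $C\alpha_*,C\beta_*\les 1$ to absorb the $R_j$-corrections inside the $\mb{D}$-factors verifies $T(X)\subseteq X$. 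A second application of Theorem \ref{thm:main bilinear} to the differences $T_j(\psi_1,\phi_1)-T_j(\psi_2,\phi_2)$, expanded via bilinearity, yields a Lipschitz constant controlled by $C(\alpha_*+\beta_*)^\theta<1/2$ for suitably large $C$. Banach's theorem then produces a unique fixed point in $X$ and Lipschitz dependence on data.

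To promote the fixed point to an $H^{s_0}_\sigma$-strong solution per Definition \ref{def:sol}, I would approximate $(\psi_0,\phi_0)$ by smooth data via frequency truncation, solve the approximate problems classically at high regularity, and use the Lipschitz stability above to force convergence of the classical solutions to $(\psi,\phi_+)$ in $\mb{F}^{s_0,\sigma}_M(I')\times\mb{V}^{s_0+\frac{1}{2},\sigma}_{+,m}(I')$ on each compact subinterval $I'\subset I$; \eqref{eqn:norms control L4} then supplies the required $C(I';H^{s_0}_\sigma\times H^{s_0+\frac{1}{2}}_\sigma)$ convergence. Uniqueness of strong solutions follows from the same stability bound together with uniqueness of classical solutions. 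For the persistence of regularity, I would run a second iteration in $\mb{F}^{s,\sigma}_M(I)\times\mb{V}^{s+\frac{1}{2},\sigma}_{+,m}(I)$ using the fractional Leibniz bounds of Theorem \ref{thm:main bilinear}; crucially, each term on the right-hand side of those estimates factors as a $\mb{D}$- or $s_0$-level norm (already controlled to be small from the $s_0$-iteration) multiplied by a single $s$-level norm, so the iteration closes at level $s$ under exactly the same smallness conditions, and uniqueness identifies the resulting higher-regularity solution with $(\psi,\phi_+)$.

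The main obstacle will be coordinating the two-layer structure of the iteration: the fixed-point argument is run in the $\mb{F}^{s_0,\sigma}_M$-norm, while the smallness that closes it originates entirely in the $\mb{D}$-type Strichartz norms. One must verify that the $\mb{D}$-norm of an element of $X$ does not grow past its linear contribution by so much that re-plugging into the bilinear estimate fails to return a bound by $R_1$. This is precisely the role of the calibration $\alpha_*\les 1/C$: it guarantees that the nonlinear correction is dominated by the linear part in the sense required, so the $\theta$-factor in Theorem \ref{thm:main bilinear} continues to deliver the needed smallness. A secondary subtlety is the decomposition $\mb{F}=\mb{V}+\mb{Y}$ in the statement of Theorem \ref{thm:main bilinear}, but since $X$ uses the sum norm, simply summing the two partial bilinear estimates delivers the single $\mb{F}^{s_0,\sigma}_M$-bound needed to close the argument.
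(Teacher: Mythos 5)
Your iteration scheme does not close as stated; the missing step is the decomposition of the bilinear products into linear and nonlinear pieces. Concretely, your ball $X$ has radii $R_1 \approx C\alpha_*(\alpha_*+\beta_*)$ and $R_2 \approx C\alpha_*^2$, where $\alpha_*=\alpha^\theta A^{1-\theta}$ and $\beta_* = \beta^\theta B^{1-\theta}$. The calibration makes $\alpha_*,\beta_*\les 1/C$, hence $R_1,R_2 = O(1/C)$, but these radii are in general far larger than $\alpha$ and $\beta$ themselves when $A$ or $B$ is large. Therefore the translated bound $\|\psi\|_{\mb{D}^{-\frac12}_\sigma(I)}\lesa \alpha+R_1$ is dominated by $R_1$, not by $\alpha$. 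Take $B$ of unit size, $\alpha$ at the boundary value $(CA^{1-\theta})^{-1/\theta}$, and send $A\to\infty$: then $\alpha\to 0$ while $R_1\approx 1/C$ stays bounded below, so $(\alpha+R_1)^\theta(A+R_1)^{1-\theta}\approx R_1^\theta A^{1-\theta}\to\infty$ and can never be recaptured by the radius $R_1$. Thus $T(X)\subseteq X$ fails. Your final paragraph correctly locates the tension, but the assertion that the calibration forces the nonlinear correction to be dominated by the linear one in the $\mb{D}$-metric is false precisely in this regime.

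The remedy used in the paper is to decompose $\phi\psi = \phi_L\psi_L + \phi_N\psi_L + \phi_L\psi_N + \phi_N\psi_N$ (and $\overline\psi\psi$ similarly), where $\psi_L, \phi_L$ are the free evolutions and $\psi_N, \phi_N$ the Duhamel parts, and to define the iteration set directly by smallness of $\|\psi_N\|_{\mb{F}^{s_0,\sigma}_M(I)}$ and $\|\phi_N\|_{\mb{V}^{s_0+\frac12,\sigma}_{+,m}(I)}$. For any term containing $\psi_N$ or $\phi_N$, one never needs to invoke $\alpha$ or $\beta$: the interpolated factor $\|\psi_N\|_{\mb{D}^{-\frac12}_\sigma}^\theta\|\psi_N\|_{\mb{F}^{s_0,\sigma}_M}^{1-\theta}\lesa\|\psi_N\|_{\mb{F}^{s_0,\sigma}_M}\les\epsilon_0$ is already small, with $\epsilon_0=\alpha_*+\beta_*$. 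Only the purely linear term $\phi_L\psi_L$ exploits the hypotheses on $\alpha,\beta$, and there one genuinely has $\|\psi_L\|_{\mb{D}^{-\frac12}_\sigma(I)}<\alpha$ and $\|\psi_L\|_{\mb{F}^{s_0,\sigma}_M(I)}\lesa A$, so that the bilinear estimate returns $\alpha_*\beta_*$. This separation is what allows the quadratic terms to be summable against $R_1$; it is the key technical step your proposal omits.

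A secondary gap is in the persistence of regularity: the constant in the fractional Leibniz bounds of Theorem~\ref{thm:main bilinear} grows like $2^s$, so the $s$-level iteration does not close on the whole interval $I$ under the $s_0$-calibrated smallness of $\alpha,\beta$. The paper covers $I$ by $\mc{O}_s(1)$ subintervals $I'$ on which $\|\psi_L\|_{\mb{D}^{-\frac12}_\sigma(I')}^\theta\|\psi_0\|_{H^{s_0}_\sigma}^{1-\theta} + \|\phi_L\|_{\mb{D}^0_\sigma(I')}^\theta\|\phi_0\|_{H^{s_0+\frac12}_\sigma}^{1-\theta}$ is small relative to the $s$-dependent constant, then glues. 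Your claim that the higher-regularity iteration closes under exactly the same smallness conditions overlooks this growth and needs the same interval-subdivision argument.
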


\begin{proof}
Let $\epsilon_0 = \alpha^\theta A^{1-\theta} + \beta^\theta B^{1-\theta}$ and $\epsilon_s = \| \psi_0 \|_{H^s_\sigma} + \| \phi_0 \|_{H^{s+\frac{1}{2}}_\sigma}$. For ease of notation, we take
    $$ \psi_L = \mc{U}_M(t-t_0) \psi_0, \qquad \phi_L = \mc{U}^+_m(t-t_0) \phi_0, \qquad \psi_N=\psi- \psi_L, \qquad \phi_N = \phi - \phi_L.$$
Define the set $\mc{X}_s \subset \mb{F}^{s, \sigma}_M(I) \times \mb{V}^{s, \sigma}_{+, m}(I)$ as the  collection of all $(\psi, \phi) \in \mb{F}^{s, \sigma}_M(I) \times \mb{V}^{s, \sigma}_{+, m}(I)$ such that
    $$ \| \psi_N \|_{\mb{F}^{s_0, \sigma}_{M}(I) } + \| \phi_N \|_{\mb{V}^{s_0, \sigma}_{+, m}(I)} \les \epsilon_0, \qquad \| \psi_N \|_{\mb{F}^{s_0, \sigma}_M(I)} + \| \phi_N \|_{\mb{V}^{s, \sigma}_M(I)} \les \epsilon_s. $$
Our goal is construct a fixed point of the map $\mc{T}=(\mc{T}_1, \mc{T}_2):\mc{X}_s \to \mc{X}_s$ defined as
    $$ \mc{T}_1(\psi, \phi) = \psi_L + \mc{I}^M_{t_0}[\Re(\phi) \gamma^0 \psi], \qquad \mc{T}_2(\psi, \phi) = \phi_L + \mc{I}^{+, m}_{t_0}[\lr{\nabla}_m^{-1}(\overline{\psi}\psi)]. $$
To this end, if $(\psi, \phi) \in \mc{X}_s$, then after decomposing the product $\phi \psi = \phi_L \psi_L + \phi_N \psi_L + \phi_L \psi_N + \psi_N \phi_N$, an application of Theorem \ref{thm:main bilinear} together with the bounds \eqref{eqn:free solns bounded} and \eqref{eqn:norms control L4} implies there exists $C^*>0$ such that
    \begin{equation}\label{eqn:proof of thm lwp:psi bound}
        \begin{split}
            \| \mc{I}^M_{t_0}(\Re(\phi) \gamma^0 \psi) \|_{\mb{F}^{s_0, \sigma}_M(I)} &\les C^* \Big( \beta^\theta B^{1-\theta} + \epsilon_0 \Big) \epsilon_0\\
            \| \mc{I}^M_{t_0}(\Re(\phi) \gamma^0  \psi) \|_{\mb{F}^{s, \sigma}_M(I)} &\les  C^* \Big( \beta^\theta B^{1-\theta} + \epsilon_0\Big) \epsilon_s.
        \end{split}
    \end{equation}
Similarly, decomposing $\overline{\psi} \psi =\overline{\psi}_L \psi_L+\overline{\psi}_N \psi_L+\overline{\psi}_L \psi_N+\overline{\psi}_N \psi_N$, we have $C^*>0$ such that
    \begin{equation}\label{eqn:proof of thm lwp:phi bound}
        \begin{split}
            \| \mc{I}^{+, m}_{t_0}(\lr{\nabla}_m^{-1} \overline{\psi} \psi) \|_{\mb{V}^{s_0, \sigma}_{+,m}(I)} &\les C^*  \epsilon_0^2\\
            \| \mc{I}^{+, m}_{t_0}(\lr{\nabla}_m^{-1} \overline{\psi} \psi) \|_{\mb{V}^{s, \sigma}_{+,m}(I)}  &\les  C^*  \epsilon_0 \epsilon_s.
        \end{split}
    \end{equation}
To show that $\mc{T}$ is a contraction on $\mc{X}_s$, we observe that for $(\psi, \phi), (\psi', \phi') \in \mc{X}$ after decomposing the difference of the products as
    $$  \phi\psi - \phi' \psi' = (\phi - \phi') (\psi_L + \psi_N) + (\phi_L + \phi'_N) (\psi' - \psi) $$
another application of Theorem \ref{thm:main bilinear},  \eqref{eqn:free solns bounded},  and \eqref{eqn:norms control L4} implies that there exists $C^*=C^*(s)>0$ such that
    \begin{equation}\label{eqn:proof of thm lwp:psi diff bound low}
        \begin{split}
             \| \mc{I}^M_{t_0}(\Re(\phi) \gamma^0 \psi) - \mc{I}^M_{t_0}(\Re(\phi')& \gamma^0 \psi') \|_{\mb{F}^{s_0, \sigma}_M(I)}\\
                &\les C^* \Big( \beta^\theta B^{1-\theta} + \epsilon_0 \Big) \Big( \| \psi - \psi'\|_{\mb{F}^{s_0, \sigma}_{M}(I)} + \| \phi - \phi' \|_{\mb{V}^{s_0, \sigma}_{+, m}(I)}\Big)
        \end{split}
    \end{equation}
and
    \begin{equation}\label{eqn:proof of thm lwp:psi diff bound high}
        \begin{split}
            \| \mc{I}^M_{t_0}(\Re(\phi)& \gamma^0 \psi) - \mc{I}^M_{t_0}(\Re(\phi') \gamma^0 \psi') \|_{\mb{F}^{s, \sigma}_M(I)} \\
            &\les C^* \Big( \beta^\theta B^{1-\theta} + \epsilon_0 \Big) \Big( \| \psi - \psi'\|_{\mb{F}^{s, \sigma}_{M}(I)} + \| \phi - \phi' \|_{\mb{V}^{s, \sigma}_{+, m}(I)}\Big) \\
            &\qquad \qquad \qquad\qquad \qquad + C^*\epsilon_s \Big( \| \psi - \psi'\|_{\mb{F}^{s_0, \sigma}_{M}(I)} + \| \phi - \phi' \|_{\mb{V}^{s_0, \sigma}_{+, m}(I)}\Big).
        \end{split}
    \end{equation}
Similarly, we have $C^*=C^*(s)>0$ such that
     \begin{equation}\label{eqn:proof of thm lwp:phi diff bound low}
        \begin{split}
             \| \mc{I}^{+, m}_{t_0}(\lr{\nabla}_m^{-1} \overline{\psi} \psi) - \mc{I}^{+, m}_{t_0}(\lr{\nabla}_m^{-1} \overline{\psi'} \psi')\|_{\mb{V}^{s_0, \sigma}_{+, m}(I)} \les C^* \epsilon_0 \Big( \| \psi - \psi'\|_{\mb{F}^{s_0, \sigma}_{M}(I)} + \| \phi - \phi' \|_{\mb{V}^{s_0, \sigma}_{+, m}(I)}\Big)
        \end{split}
    \end{equation}
and
    \begin{equation}\label{eqn:proof of thm lwp:phi diff bound high}
        \begin{split}
            \|\mc{I}^{+, m}_{t_0}(\lr{\nabla}_m^{-1} \overline{\psi} \psi)& - \mc{I}^{+, m}_{t_0}(\lr{\nabla}_m^{-1} \overline{\psi'} \psi') \|_{\mb{V}^{s, \sigma}_{+, m}(I)} \\
            &\les C^* \epsilon_0 \Big( \| \psi - \psi'\|_{\mb{F}^{s, \sigma}_{M}(I)} + \| \phi - \phi' \|_{\mb{V}^{s, \sigma}_{+, m}(I)}\Big) \\
            &\qquad \qquad \qquad\qquad \qquad + C^*\epsilon_s \Big( \| \psi - \psi'\|_{\mb{F}^{s_0, \sigma}_{M}(I)} + \| \phi - \phi' \|_{\mb{V}^{s_0, \sigma}_{+, m}(I)}\Big).
        \end{split}
    \end{equation}
Consequently, taking $C^*=C^*(s)>0$ to be the largest of the constants appearing in (\ref{eqn:proof of thm lwp:psi bound})-(\ref{eqn:proof of thm lwp:phi diff bound high}), we see that provided
    $$ \beta^\theta B^{1-\theta} + \alpha^\theta A^{1-\theta} \les (2C^*)^{-1}$$
we have $\mc{T} :\mc{X}_s\to \mc{X}_s$ and the difference bounds
     \begin{align*}
       \| \mc{T}_1(\psi, \phi) - \mc{T}(\psi', \phi') \|_{\mb{F}^{s_0, \sigma}_{M}(I)} + \| \mc{T}_2(\psi, \phi)& - \mc{T}_2(\psi', \phi')\|_{\mb{V}^{s_0, \sigma}_{+, m}(I)} \\
       &\les \frac{1}{2} \Big(\| \psi - \psi'\|_{\mb{F}^{s_0, \sigma}_{M}(I)} + \| \phi - \phi' \|_{\mb{V}^{s_0, \sigma}_{+, m}(I)} \Big)
     \end{align*}
and
    \begin{align*}
       \| \mc{T}_1(\psi, \phi)& - \mc{T}(\psi', \phi') \|_{\mb{F}^{s, \sigma}_{M}(I)} + \| \mc{T}_2(\psi, \phi) - \mc{T}_2(\psi', \phi')\|_{\mb{V}^{s, \sigma}_{+, m}(I)} \\
       &\les \frac{1}{2} \Big(\| \psi - \psi'\|_{\mb{F}^{s, \sigma}_{M}(I)} + \| \phi - \phi' \|_{\mb{V}^{s, \sigma}_{+, m}(I)} \Big) + 2C^* \epsilon_s \Big(\| \psi - \psi'\|_{\mb{F}^{s_0, \sigma}_{M}(I)} + \| \phi - \phi' \|_{\mb{V}^{s_0, \sigma}_{+, m}(I)} \Big).
     \end{align*}
Therefore a standard argument implies that there exists a unique fixed point in $\mc{X}_s$, and that the resulting solution map depends continuously on the initial data.

Fix $C = \sup_{s_0\les s \les 10} 2C^*(s)$ and suppose that $0<\alpha \les ( C A^{1-\theta})^{-\frac{1}{\theta}}$ and $0<\beta \les ( C B^{1-\theta})^{-\frac{1}{\theta}}$. Then running the above argument with $s=s_0$ shows that for data $(\psi_0, \phi_0) \in H^{s_0}_\sigma H^{s_0+\frac{1}{2}}_\sigma$ we get a unique solution  $(\psi, \phi_+) \in \mc{X}_{s_0}$ which depends continuously on the data. Approximating the data with functions in $H^{10}_\sigma \times H^{10+\frac{1}{2}}_\sigma$, and applying the previous argument with $s=10$, we obtain a sequence of solutions in $\mc{X}_{10}$ which converge to $(\psi, \phi_+)$. In particular, $(\psi, \phi_+)$ is a $H^{s_0}_{\sigma}$-strong solution to \eqref{eqn:dkg reduced}. The claimed bounds on the norms of $(\psi, \phi_+)$ follow directly from the fact that $(\psi, \phi_+) \in \mc{X}$ together with \eqref{eqn:proof of thm lwp:psi bound} and \eqref{eqn:proof of thm lwp:phi bound}. Finally, if in addition we have additional regularity $(\psi_0, \phi_0) \in H^s_\sigma$ with $s>s_0$, then running the fixed point argument as above gives a solution in $\mb{F}^{s, \sigma}_{M}(I') \times \mb{V}^{s, \sigma}_{+, m}(I')$ provided that the interval $I'\subset I$ is chosen sufficiently small such that
    $$ \| \psi_L \|_{\mb{D}^{-\frac{1}{2}}_\sigma(I')}^\theta \| \psi_0 \|_{H^{s_0}_\sigma}^{1-\theta} + \| \phi_L \|_{\mb{D}^{0}_\sigma(I')}^\theta \| \phi_0 \|_{H^{s_0+\frac{1}{2}}_\sigma}^{1-\theta} \les ( 2 C^*(s) )^{-1}.$$
Since we can cover the full interval $I$ by $\mc{O}_s(1)$ smaller intervals $I'$, we deduce that $(\psi, \phi_+) \in \mb{F}^{s, \sigma}_M(I) \times \mb{V}^{s, \sigma}_M(I)$ as claimed.
\end{proof}

Note that we may take $I=[0, \infty)$ in the previous theorem. In particular, if $(\psi_0, \phi_0) \in H^{s_0}_\sigma \times H^{s_0+\frac{1}{2}}_\sigma$ then provided $\| \mc{U}_M(t) \psi_0 \|_{\mb{D}^{-\frac{1}{2}}_\sigma(I)}$ and $\| \mc{U}_m^+(t) \phi_0 \|_{\mb{D}^0_\sigma(I)}$ are sufficiently small, we have global existence and scattering. \\

The next result we give implies that $H^{s_0}_\sigma$-strong solutions belong to $\mb{F}^{s_0, \sigma}_M(I)\times \mb{V}^{s_0+\frac{1}{2}, \sigma}_{+, m}(I)$, provided that the $L^4_{t,x}$ type norm is sufficiently small relative to $\|(\psi, \phi_+)(t_0)\|_{H^{s_0}_\sigma \times H^{s_0+\frac{1}{2}}_\sigma}$.

\begin{theorem}\label{thm:main-det}
Let $s_0, \sigma \g 0$ and $M, m>0$ satisfy either (\ref{eqn:resonant regime}) or (\ref{eqn:non resonant regime}). There exists $C>1$ and $0<\theta<1$ such that, if $A>0$ and $I\subset \RR$ is a left-close interval, $t_0\in I$, and $(\psi, \phi_+)$ is a $H^{s_0}_\sigma$-strong solution on $I$ satisfying
            $$ \| \psi(t_0) \|_{H^{s_0}_\sigma} + \| \phi_+(t_0)\|_{H^{s_0+\frac{1}{2}}_\sigma} \les A, \qquad \min\Big\{ \| \psi \|_{\mb{D}^{-\frac{1}{2}}_\sigma(I)}, \| \phi_+ \|_{\mb{D}^0_\sigma(I)} \Big\}\les \big( C (1+A)^{2-\theta}A^{1-\theta}\big)^{-\frac{1}{\theta}}$$
then $(\psi, \phi_+) \in \mb{F}^{s_0, \sigma}_M (I) \times \mb{V}^{s_0 + \frac{1}{2}, \sigma}_{+, m}(I)$ and we have the bound
        $$ \| \psi \|_{\mb{F}^{s_0, \sigma}_M(I)} + \| \phi_+ \|_{\mb{V}^{s_0 + \frac{1}{2}, \sigma}_{+, m}(I)} \les C A (1+A) . $$
\end{theorem}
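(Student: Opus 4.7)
The plan is a bootstrap/continuity argument. Introduce
$$X(t) = \|\psi\|_{\mb{F}^{s_0,\sigma}_M([t_0,t])}, \ Y(t) = \|\phi_+\|_{\mb{V}^{s_0+\frac{1}{2},\sigma}_{+,m}([t_0,t])}, \ \alpha(t) = \|\psi\|_{\mb{D}^{-\frac{1}{2}}_\sigma([t_0,t])}, \ \beta(t) = \|\phi_+\|_{\mb{D}^0_\sigma([t_0,t])}.$$
Applying Theorem \ref{thm:main bilinear} (through the $\mb{V}^{s_0,\sigma}_M$ and $\mb{Y}^{s_0,\sigma}_M$ bounds, summed to control the $\mb{F}^{s_0,\sigma}_M$ norm) to the Duhamel formulas, combined with \eqref{eqn:free solns bounded}, yields constants $c_0, c_1 > 0$ depending only on $(s_0,\sigma,M,m)$ with
\begin{align*}
X(t) &\les c_0 A + c_0 (\alpha \beta)^\theta (XY)^{1-\theta}, \\
Y(t) &\les c_0 A + c_0 \alpha^{2\theta} X^{2(1-\theta)},
\end{align*}
together with the auxiliary embeddings $\alpha \les c_1 X$ and $\beta \les c_1 Y$ coming from \eqref{eqn:norms control L4}. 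The goal is the uniform bound $F(t) := X(t) + Y(t) \les K := C_1 A(1+A)$; by a standard continuation argument based on Theorem \ref{thm:lwp}, \eqref{eqn:sum intervals bound F}, and the $H^{s_0}_\sigma$-continuity of strong solutions, it suffices to establish the self-improving implication $F \les K \Rightarrow F \les K/2$.

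We close the bootstrap by splitting according to the hypothesis on $\min\{\alpha,\beta\}$. In the subcase $\beta(I) \les \epsilon := (C(1+A)^{2-\theta}A^{1-\theta})^{-1/\theta}$, substituting $\alpha \les c_1 X$ into the $X$-inequality produces $X \les c_0 A + c_0 c_1^\theta \epsilon^\theta Y^{1-\theta} X$. The key computation
$$\epsilon^\theta K^{1-\theta} \;=\; \frac{C_1^{1-\theta}}{C(1+A)} \;\les\; \frac{C_1^{1-\theta}}{C}$$
shows that, under the bootstrap hypothesis $Y \les K$ and with $C$ sufficiently large, the nonlinear term may be absorbed on the left, giving $X \les 2c_0 A$. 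In the subcase $\alpha(I) \les \epsilon$, substituting $\beta \les c_1 Y$ instead gives $X \les c_0 A + c_0 c_1^\theta \epsilon^\theta Y X^{1-\theta}$; under $Y \les K$, this becomes a scalar inequality $X \les c_0 A + c^* X^{1-\theta}$ with $c^* = c_0 c_1^\theta \epsilon^\theta K$, and the same computation together with $c^* \lesa A$ yields $X \lesa A$. In either subcase one obtains $X \les c_3 A$ for a constant $c_3$, after which the $Y$-inequality with $\alpha \les c_1 X$ gives $Y \les c_0 A + c_0 c_1^{2\theta} c_3^2 A^2 \lesa A(1+A)$. Choosing $C_1$ and $C$ large enough delivers $F \les K/2$.

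The main obstacle is the asymmetry of the $\phi_+$-nonlinearity $\alpha^{2\theta} X^{2(1-\theta)}$: it derives no smallness from $\beta$, and pure $\alpha$-smallness would produce a bound independent of $A$, which is far too large in the regime $A \to 0$. The resolution is the two-step scheme above: first exploit the symmetric $(\alpha\beta)^\theta$ structure of the $X$-inequality (combined with whichever of the embeddings $\alpha \les c_1 X$ or $\beta \les c_1 Y$ is appropriate) to reduce to a scalar bootstrap in $X$ alone, obtaining $X \lesa A$; only then tame $Y$ via the recovered smallness of $X$. The specific exponent $2-\theta$ in the hypothesis on $\epsilon$ is chosen so that $\epsilon^\theta K^{1-\theta}$ scales precisely like $1/(1+A)$, ensuring that the absorption step closes uniformly in $A$.
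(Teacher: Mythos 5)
Your proposal is correct and follows essentially the same strategy as the paper: a bootstrap argument built on Theorem \ref{thm:main bilinear}, using the small $L^4_{t,x}$ factor to first close the spinor bound $X \lesssim A$ by absorbing the coefficient $\epsilon^\theta K^{1-\theta} \lesssim 1/(1+A)$, and then feeding this into the quadratic wave Duhamel term to get $Y \lesssim A(1+A)$. The paper does not split explicitly according to which of $\alpha,\beta$ is small; instead it uses $\alpha\beta \le \delta\cdot\max\{\alpha,\beta\}$ directly, and it implements the continuity step through a supremum set $\mc{T}$ together with Theorem \ref{thm:lwp}, \eqref{eqn:sum intervals bound F}, and the $H^{s_0}_\sigma$-continuity of strong solutions --- the same ingredients you cite when invoking the ``standard continuation argument.'' One small slip in your write-up: in the second subcase ($\alpha\le\epsilon$), the claim $c^* \lesssim A$ is false for small $A$, since $c^* = c_0 c_1^\theta \epsilon^\theta K \approx A^\theta$. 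What you actually need, and what holds, is that under the bootstrap hypothesis $X \le K$ one has $c^* X^{1-\theta} \le c_0 c_1^\theta \epsilon^\theta K^{2-\theta} = c_0 c_1^\theta C_1^{2-\theta} A/C$, which is $\lesssim A$ once $C$ is chosen large; this is the analogue, with exponent $2-\theta$, of the display you computed for the first subcase, and it closes the absorption uniformly in $A$.
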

\begin{proof}
Let $C^*$ be the largest of the constants appearing in Lemma \ref{lem:intervals disposable}, Theorem \ref{thm:main bilinear}, Theorem \ref{thm:lwp}, and \eqref{eqn:free solns bounded}, \eqref{eqn:norms control L4}. We first consider the case $I=[t_0, t_1)$ with $t_1 \les \infty$. Let $(\psi, \phi)$ be a $H^{s_0}_\sigma$-strong solution on $I$, and define
    $$ \delta = \min\Big\{ \| \psi \|_{\mb{D}^{-\frac{1}{2}}_\sigma(I)}, \| \phi_+ \|_{\mb{D}^0_\sigma(I)} \Big\} $$
and
    $$ \mc{T} = \Big\{ t_0<T\les t_1 \, \Big| \, \sup_{t_0<T'\les T}  \| \psi \|_{\mb{F}^{s_0, \sigma}_M([t_0, T'))}\les 2C^*A, \sup_{t_0<T'\les T} \| \phi_+ \|_{\mb{V}^{s_0+\frac{1}{2}, \sigma}_{+, m}([t_0,T'))} \les (2 C^*)^3 A(1+A)\Big\}.$$
An application of the local well-posedness result in Theorem \ref{thm:lwp} implies that $T \in \mc{T}$ provided that $T-t_0$ is sufficiently small, in particular, $\mc{T}$ is nonempty. If we let $T_{sup} = \sup \mc{T}$, then our goal is to show that $T_{sup} = t_0$. Suppose that $T_{sup}<t_0$ and let $T_n \in \mc{T}$ be a sequence of times converging to $T_{sup}$. The continuity of the solution $(\psi, \phi_+)$ at $T_{sup}$, together with \eqref{eqn:norms control L4} and the definition of $\mc{T}$ implies that
    \begin{align*} \| \psi(T_{sup})\|_{H^{s_0}_\sigma} + \| \phi_+(T_{sup}) \|_{H^{s_0+\frac{1}{2}}_\sigma} &\les C^*\sup_{t_0<T< T_{sup}} \Big( \|\psi\|_{\mb{F}^{s_0, \sigma}_{M}([t_0, T))} + \| \phi_+ \|_{\mb{V}^{s_0+\frac{1}{2}, \sigma}_{+, m}([t_0, T))}\Big) \\
                    &\les (2 C^*)^4 A (1+A)
    \end{align*}
Hence, again applying Theorem \ref{thm:lwp}, there exists $n$ and $\epsilon_0>0$ such that for all $0<\epsilon<\epsilon_0$ we have on the interval $[T_n, T_{sup}+\epsilon)$ the bound
     \begin{align*} \| \psi \|_{\mb{F}^{s_0, \sigma}_M([T_n, T_{sup}+\epsilon))} + \| \phi \|_{\mb{V}^{s_0+\frac{1}{2}, \sigma}_{+, m}([T_n,T_{sup}+\epsilon))}&\les 2 C^* \Big( \| \psi(T_{sup}) \|_{H^{s_0}_{\sigma}} + \| \phi_+(T_{sup}) \|_{H^{s_0}_{\sigma}}\Big) \\
     &\les (2C^*)^5 A ( 1+A) .
     \end{align*}
We now exploit the smallness assumption on the $L^4_{t,x}$ norm. An application of \eqref{eqn:norms control L4}, \eqref{eqn:sum intervals bound F}, and \eqref{eqn:sum intervals bound V}, together with Theorem \ref{thm:main bilinear} and the fact that $(\psi, \phi_+)$ is a $H^{s_0}_{\sigma}$-strong solution on $[t_0, t_1)$ implies that
    \begin{align*}
      \| &\psi \|_{\mb{F}^{s_0, \sigma}_M([t_0, T_{sup}+\epsilon))} \\
      &\les C^* \| \psi(t_0) \|_{H^{s_0}_\sigma} + C^* \Big( \| \psi \|_{\mb{D}^{-\frac{1}{2}}_\sigma([t_0, T_{sup} + \epsilon))}  \| \phi_+ \|_{\mb{D}^{0}_\sigma([t_0, T_{sup} + \epsilon))} \Big)^\theta\\
       &\qquad \times \Big( \big( \| \psi \|_{\mb{F}^{s_0, \sigma}_M([t_0, T_n))} + \| \psi \|_{\mb{F}^{s_0, \sigma}_M([T_n, T_{sup} + \epsilon))}\big) \big( \| \phi_+ \|_{\mb{V}^{s_0+\frac{1}{2}, \sigma}_{+, m}([t_0, T_n))} + \| \phi_+ \|_{\mb{V}^{s_0+\frac{1}{2}, \sigma}_{+, m}([T_n, T_{sup} + \epsilon))}\big) \Big)^{1-\theta} \\
      &\les C^* A + \delta^\theta (2C^*)^{12} (1+A)^{2-\theta} A^{2-\theta}.
    \end{align*}
Consequently, provided $\delta \les [ (2C^*)^{12}(1+A)^{2-\theta} A^{1-\theta} ]^{-\frac{1}{\theta}}$, we see that
    $$ \| \psi \|_{\mb{F}^{s_0, \sigma}_M([t_0, T_{sup} + \epsilon))} \les 2 C^* A.$$
To bound $\phi$, we simply observe that another application of Theorem \ref{thm:main bilinear} implies that
    \begin{align*}
      \| \phi_+ \|_{\mb{V}^{s_0+\frac{1}{2}}_{+, m}([t_0, T_{sup} + \epsilon))} &\les C^* \| \psi_+(t_0) \|_{H^{s_0+\frac{1}{2}}_\sigma} + C^* \| \psi \|_{\mb{F}^{s_0, \sigma}_M([t_0, T_{sup}+\epsilon))}^2 \\
        &\les (2 C^*)^3 A (1+A).
    \end{align*}
Therefore $T_{sup}+\epsilon \in \mc{T}$ which contradicts the assumption $T_{sup} < t_1$. Consequently we must have $T_{sup} = t_1$ and hence $\psi \in \mb{F}^{s_0, \sigma}_{M}(I)$, $\phi_+ \in \mb{V}^{s_0+\frac{1}{2}}_{+, m}(I)$, and the claimed bounds hold. In the general case, when $t_0$ is not the left end point of $I$, we simply need to run the above argument for times smaller than $t_0$.\end{proof}

\section{Proof of Theorem \ref{thm:cond-dkg} and Corollary \ref{cor:rad}}\label{sec:proof of thm cond-dkg}
Here we give the proof of Theorem \ref{thm:cond-dkg} and Corollary \ref{cor:rad}.
\subsection{Proof of Theorem \ref{thm:cond-dkg}} We first observe that since \eqref{eqn:DKG} and \eqref{eqn:dkg reduced} are equivalent in the class of $H^{s_0}_\sigma$-strong solutions, and the Dirac-Klein-Gordon system is time reversible, it is enough to consider the forward in time problem for the reduced system \eqref{eqn:dkg reduced}. Thus let $(\psi, \phi_+):[t_0, t^*) \times \RR^3 \rightarrow \CC^4 \times \CC$ be a forward maximal $H^{s_0}_\sigma$-solution to \eqref{eqn:dkg reduced} such that
    $$ \sup_{t \in [t_0, t^*)} \Big( \| \psi(t) \|_{H^{s_0}_\sigma} + \| \phi_+(t) \|_{H^{s_0+\frac{1}{2}}_\sigma}\Big)\les A, \qquad \| \psi \|_{\mb{D}^{-\frac{1}{2}}_\sigma([t_0, t^*))}<\infty.$$
The finiteness of the dispersive norm $\| \cdot \|_{\mb{D}^{-\frac{1}{2}}_\sigma([t_0, t^*))}$ together with the Dominated Convergence Theorem, implies that for every $\delta>0$ we can find an interval $I=[t_1, t^*)$ with $t_1<t^*$ such that
        $$ \| \psi \|_{\mb{D}^{-\frac{1}{2}}_\sigma(I)} \les \delta.$$
In particular, choosing $\delta$ sufficiently small, depending only on $A$, an application of Theorem \ref{thm:main-det} implies that $(\psi, \phi_+) \in \mb{F}^{s_0, \sigma}_M(I)\times \mb{V}^{s_0+\frac{1}{2}, \sigma}_{+, m}(I)$. Therefore, by the existence of left limits in $V^2$, there exists $(\psi_0, \phi_0) \in H^{s_0}_\sigma \times H^{s_0+\frac{1}{2}}_\sigma$ such that
    $$ \lim_{t\rightarrow t^*} \Big( \big\|\psi(t) - \mc{U}_M(t) \psi_0 \big\|_{H^{s_0}_\sigma} + \big\| \phi(t) - \mc{U}^+_m(t) \phi_0 \big\|_{H^{s_0+\frac{1}{2}}_\sigma} \Big) = 0.$$
The local well-posedness theory in Theorem \ref{thm:lwp}, together with the definition of maximal $H^{s_0}_\sigma$-solution, then implies that we must have $t^*=\infty$. Consequently the solution $(\psi, \phi_+)$ exists globally in time and scatters as $t\rightarrow \infty$.

\subsection{Proof of Corollary \ref{cor:rad}} In view of Theorem \ref{thm:cond-dkg}, and the fact that the spinor remains in $\mc{H}$, and the wave component $\phi$ remains radial, it is enough to show that for $\psi_0 \in \mc{H}$, $1<p<\infty$, and $\sigma \g 0$ we have the bound 
        \begin{equation}\label{eqn:proof of cor:angular sum bound}  \sum_{N \in 2^\NN} N^\sigma \| H_N  \psi_0 \|_{L^p_x} \les \| \psi_0 \|_{L^p_x}.\end{equation}
However this follows directly from the definition of the the angular frequency localisation operators $H_N$, since the orthogonality of the spherical harmonics $y_{\ell, n}$ implies that for $\ell \g 2$ and $j=1, 2, 3$ we have
    $$\lr{1, y_{l, n} }_{L^2(\sph^2)} = \lr{\omega_j, y_{\ell, n}}_{L^2(\sph^2)} = 0.$$
Therefore $H_N \psi_0 = 0$ for $N \gg 1$ and hence \eqref{eqn:proof of cor:angular sum bound} follows.

    \section{Further Notation and Preliminary Results}\label{sec:prelim}

In this section we recall a number of results which will be used in the proof of the bilinear estimates in Theorem \ref{thm:main bilinear}. The setup and notation follows closely our previous paper \cite{Candy2016}, in particular, we refer to \cite{Candy2016} for further details and references. \\

We start by recalling the key fact that estimating a Duhamel term in $V^2_{\pm, m}$, can be reduced to estimating a bilinear integral. More precisely, suppose that $F \in L^\infty_t L^2_x$, and
      \[\sup_{ \| P_\lambda H_N v \|_{V^2_{\pm,m}} \lesa 1} \Big| \int_\RR \lr{ P_\lambda H_N v(t), F(t) }_{L^2_x} dt\Big| < \infty.\]
If $u \in C(\RR, L^2_x)$ satisfies $- i \p_t u \pm \lr{\nabla}_m u = F$, then $P_\lambda H_N u \in V^2_{\pm,m }$ and we have
      \begin{equation}\label{eqn:energy ineq for V2}\| P_\lambda H_N u \|_{V^2_{\pm ,m}} \lesa \|
        P_\lambda H_N u(0) \|_{L^2} + \sup_{ \| P_\lambda H_N v
          \|_{V^2_{\pm ,m}} \lesa 1} \int_\RR \lr{ P_\lambda H_N v(t),
          F(t) }_{L^2_x} dt .\end{equation}
An analogous bound holds without the angular frequency multiplier $H_N$. See, for instance, \cite[Lemma 7.3]{Candy2016} for a proof of this inequality.\\

        Let $\mc{Q}_\mu$ be a  collection of cubes of diameter proportional to $\mu$
        covering $\RR^3$ with uniformly finite overlap, and let $(\rho_q)_{q \in \mc{Q}_\mu}$ be a
        subordinate partition of unity.
For $q \in Q$ let
$P_q = \rho_q(|-i \nabla|)$. Given $ \alpha \les 1$ and a collection  $\mc{C}_\alpha$ of spherical caps $\kappa$ of diameter $\alpha$ with uniformly finite overlap, we let    $(\rho_\kappa)_{\kappa \in \mc{C}_\alpha}$ be a smooth partition
    of unity subordinate to the conic sectors spanned by $\kappa$, and define the
    angular Fourier localisation multipliers as
		$ R_\kappa = \rho_\kappa( - i \nabla)$. In certain regimes, we need to use the fact that the modulation localisation operators are uniformly disposable.
\begin{lemma}\label{lem:disposable} Let $1\leq q,r\leq \infty$, and $m>0$.
For any $0<\alpha\leq 1$, $\lambda \in 2^{\NN} $, $\kappa \in \mathcal{C}_\alpha$, $q\in \mc{Q}_{\alpha\lambda^2}$, and $d\in 2^{\ZZ}$ with $d\gtrsim \alpha^2 \lambda$, we have
                \begin{equation}\label{eqn:dispose}
                  \|C^{\pm, m}_{d} R_\kappa P_\lambda P_q u\|_{L^q_t L^r_x}+\|C^{\pm, m}_{\les d} R_\kappa P_\lambda P_q u\|_{L^q_t L^r_x}\lesa  \| R_\kappa  P_\lambda P_q u\|_{L^q_t L^r_x}.
                \end{equation}
Here, if $\alpha\gtrsim \lambda^{-1}$, the operator $P_q$ can be dropped, and if $\lambda\approx 1$, the operator $R_\kappa$ can be dropped.
Further, for
                every $d \in 2^\ZZ$
                \begin{equation}\label{eqn:dispose2}
                 \| C_{d}^{\pm, m} u \|_{V^2_{\pm,m}}  + \| C_{\les d}^{\pm, m} u \|_{V^2_{\pm,m}} \lesa \| u \|_{V^2_{\pm,m}}.
                \end{equation}
              \end{lemma}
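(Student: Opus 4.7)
The plan is to handle the two bounds separately. For the $V^2_{\pm,m}$ statement \eqref{eqn:dispose2}, the approach is to use the conjugation identity $C^{\pm,m}_d = \mc{U}^\pm_m(t)\,P^{(t)}_d\,\mc{U}^\pm_m(-t)$ already exploited in the proof of Lemma \ref{lem:intervals disposable}, where $P^{(t)}_d = \rho(|-i\p_t|/d)$ is the time--frequency multiplier. Since the free propagators provide an isometry between $V^2_{\pm,m}$ and $V^2$, matters reduce to showing that $P^{(t)}_d$, which acts as convolution in $t$ with the Schwartz function $d\,\check{\rho}(td)$ of $L^1_t$-norm $\approx 1$, is bounded on $V^2$. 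This follows from a standard Minkowski argument applied to the partition sum defining $|\cdot|_{V^2}$, together with the elementary estimate $\|K*u\|_{L^\infty_tL^2_x}\les\|K\|_{L^1_t}\|u\|_{L^\infty_tL^2_x}$ for the remaining piece of the norm. The bound for $C^{\pm,m}_{\les d}$ is identical, with $\rho_{\les 1}$ in place of $\rho$.

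For the Lebesgue estimate \eqref{eqn:dispose}, the plan is to realise the composition $C^{\pm,m}_d R_\kappa P_\lambda P_q$ as a space--time convolution with a kernel $K_d$ and to establish the uniform bound $\|K_d\|_{L^1_{t,x}}\lesa 1$; Young's inequality then delivers the conclusion for every Lebesgue pair $(q,r)$. After the substitution $s=\tau\pm\lr{\xi}_m$ in the Fourier representation of $K_d$, the kernel factorises as
\[ K_d(t,x) = d\,\check{\rho}(td)\cdot \bigl(\mc{U}^\pm_m(t)\check{\chi}\bigr)(x),\]
where $\chi(\xi) = \rho_\kappa(\xi)\rho(|\xi|/\lambda)\rho_q(\xi)$ collects the three spatial cutoffs. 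The time factor is $L^1_t$-normalised and concentrated on $|t|\lesa d^{-1}$, so the whole problem reduces to establishing the estimate $\|\mc{U}^\pm_m(t)\check{\chi}\|_{L^1_x}\lesa 1$ uniformly for $|t|\lesa d^{-1}$.

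To obtain this last estimate I would linearise the phase $\pm\lr{\xi}_m$ around the centre $\xi_0$ of the support of $\chi$: the constant and linear parts contribute only a unimodular scalar and a spatial translation, neither of which affects the $L^1_x$ norm. The quadratic remainder is bounded on the support by $|t|\cdot\|\mathrm{Hess}\,\lr{\xi_0}_m\|\cdot|\xi-\xi_0|^2 \lesa |t|\,\lambda^{-1}(\alpha\lambda)^2 = |t|\,\alpha^2\lambda$, exploiting that the tangential extent of the cap-box region is $\alpha\lambda$ while the Hessian of $\lr{\cdot}_m$ in tangential directions is $O(\lambda^{-1})$. The hypothesis $d\gtrsim\alpha^2\lambda$ together with $|t|\lesa d^{-1}$ forces this quadratic phase to be of order $O(1)$, so the modified symbol $\chi(\xi)\exp(\mp it\Psi(\xi))$, with $\Psi$ denoting the quadratic remainder, remains a smooth bump uniformly adapted to a box of dimensions $\alpha\lambda\times\alpha\lambda\times\min(\lambda,\alpha\lambda^2)$, and its inverse Fourier transform has $L^1_x$ norm $\lesa 1$. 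The degenerate regimes $\alpha\gtrsim\lambda^{-1}$ and $\lambda\approx 1$ correspond to dropping the now-redundant factors $P_q$ and $R_\kappa$, which only simplifies the support geometry, and the truncated multiplier $C^{\pm,m}_{\les d}$ is handled identically with $\rho$ replaced by $\rho_{\les 1}$.

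The main technical hurdle is precisely the kernel estimate $\|\mc{U}^\pm_m(t)\check{\chi}\|_{L^1_x}\lesa 1$: it encodes the sharp balance between the modulation scale $d$ and the thickness $\alpha^2\lambda$ of the hyperboloid $\tau\pm\lr{\xi}_m=0$ over the angular sector, and it is at this point that the hypothesis $d\gtrsim\alpha^2\lambda$ is critically used. Everything else is routine Minkowski and Young convolution bookkeeping.
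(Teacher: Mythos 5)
Your proof is correct and follows essentially the same approach as the paper: for \eqref{eqn:dispose2} the identical conjugation $C^{\pm,m}_d=\mc{U}^\pm_m\,P^{(t)}_d\,\mc{U}^\pm_m(-\cdot)$ together with the stability of $V^2$ under $L^1_t$-convolution, and for \eqref{eqn:dispose} an $L^1_{t,x}$ bound on the convolution kernel followed by Young, where the paper establishes the kernel bound by direct integration by parts yielding a pointwise decay of $\omega(t,x)$, while your substitution $s=\tau\pm\lr{\xi}_m$ cleanly factorises that same kernel as $d\check{\rho}(td)\cdot\big(\mc{U}^\pm_m(t)\check{\chi}\big)(x)$. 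The one place to be slightly more careful is in asserting that $\chi\,e^{\mp it\Psi}$ is a bump uniformly adapted to the box: beyond the tangential Hessian $O(\lambda^{-1})$ you cite, one must also check the radial Hessian entry $(m^2+|\xi_\perp|^2)\lr{\xi}_m^{-3}$ against the radial extent $\min\{\lambda,\alpha\lambda^2\}$, the off-diagonal entries, and the higher-order rescaled derivatives --- all of which do come out $\lesa \alpha^2\lambda$, and this anisotropy is exactly what the factor $\beta=\min\{1,\alpha\lambda\}$ in the paper's pointwise decay estimate keeps track of.
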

\begin{proof}
First, it is enough to consider
 $0<\alpha\leq 1$, $\lambda \in 2^{\NN} $, $r_0\geq 0$ satisfying $\lr{r_0}\approx \lambda$ and $d\in 2^{\ZZ}$ with $d\gtrsim \alpha^2 \lambda$, and any function $u$ satisfying
\[\supp(\widehat{u(t)})\subset \{\xi \in \RR^{3}\mid ||\xi|-r_0|\lesa \alpha\lambda^2, \lr{\xi}\approx \lambda, r_0^{\frac12} (|\xi|-\xi_1)^{\frac12} \lesa \lambda\alpha\},\]
and to prove
the estimate
\begin{equation}
\|C^{\pm, m}_{\les d} u\|_{L^q_tL^r_x}\lesa \|u\|_{L^q_tL^r_x}.
\end{equation}
The support assumptions imply that we can write
\[
C^{\pm, m}_{\les d} u=\omega \ast  u
\]
for $\widetilde{\omega}$ a smooth bump function adapted to the set
\[\big\{(\tau,\xi) \in \RR^{1+3}\mid |\tau\pm\lr{\xi}|\lesa d, ||\xi|-r_0|\lesa \alpha\lambda^2, \lr{\xi}\approx \lambda, r_0^{\frac12} (|\xi|-\xi_1)^{\frac12} \lesa \lambda\alpha\big\}.\]
Multiple integration by parts yields
\[
|\omega(t,x)|\lesa_N d\beta\lambda (\alpha\lambda)^2\Big(1+d|t|+\beta\lambda\big|x_1\pm t\frac{r_0}{\lr{r_0}}\big|+ (\alpha\lambda)|(x_2,x_3)|\Big)^{-N},
\]
where $\beta=\min\{1,\alpha\lambda\}$, and $N\in\NN$.
Clearly, this implies \eqref{eqn:dispose}.

Second, from $C_{\les d}^{\pm,m}=\mc{U}^{\pm}_m P_{\les d} \mc{U}^{\pm}_m(-\cdot)$ and
\[
\|\sigma \ast v\|_{V^2}\lesa \|\sigma\|_{L^1}\|v\|_{V^2}.
\]
the estimate \eqref{eqn:dispose2} follows.
\end{proof}

In a similar vein, for all $1\leq p \leq \infty$,  we note that
\[\sup_{N\g 1} \|H_N\|_{L^p(\RR^3)\to L^p(\RR^3)}<+\infty,\]
and $H_N$ commutes with any radial Fourier
  multiplier such as $P_\lambda$. While $H_N$ also commutes with $C_d$, it does not commute with the cube and cap localisation operators $R_\kappa$ and $P_q$.
We also note an angular concentration bound \cite[Lemma 5.2]{Sterbenz2007}, which corresponds to \cite[Lemma 8.5]{Candy2016}. Let $2\les p < \infty$ and $0 \les s <\frac{2}{p}$. For all
        $\lambda, N \g 1$, $\alpha \gtrsim \lambda^{-1}$, and
        $\kappa \in \mc{C}_\alpha$ we have
        \begin{equation} \label{eqn:ang-con}
\| R_\kappa P_\lambda H_N f \|_{L^p_x(\RR^3)} \lesa  \alpha^s N^s \| P_\lambda H_N f \|_{L^p_x(\RR^3)}.
\end{equation}

    The proof of Theorem \ref{thm:main bilinear} requires carefully exploiting the structure of the product $\overline{\psi}\psi$. To this end, we recall a number of null form bounds that have been used frequently in the literature, see for instance \cite{Candy2016} and \cite{Bejenaru2015} for further details. We start be recalling that the multipliers $\Pi_{\pm}$ satisfy
        \begin{equation}\label{eqn:nullformbound1}
\big\| \big( \Pi_{\pm_1} - \Pi_{\pm_1}(\lambda
          \omega(\kappa)) \big) R_\kappa P_\lambda f \big\|_{L^r_x}
        \lesa \alpha \| R_\kappa P_\lambda u\|_{L^r_x},
        \end{equation}
provided that $\lambda \g 1$, $\alpha \gtrsim \lambda^{-1}$, and
        $\kappa \in \mc{C}_\alpha$. Similarly, in the Klein-Gordon regime we have
        \begin{equation}\label{eqn:nullformbound2}\big\| \big( \Pi_{\pm_1} - \Pi_{\pm_1}(\xi_0)
          \big) R_\kappa P_q P_\lambda f \big\|_{L^r_x}  \lesa \alpha
                                                          \| R_\kappa P_q P_\lambda u\|_{L^r_x},
                                                        \end{equation}
provided $\lambda\g 1$, $0<\alpha\lesssim \lambda^{-1}$,    $\kappa \in \mc{C}_\alpha$, $q\in Q_{\lambda^2\alpha}$ with        centre $\xi_0$.
Consequently, the identity
    \begin{align*}
      \big[\Pi_{\pm_1} f\big]^\dagger \gamma^0 \Pi_{\pm_2} g = \big[(\Pi_{\pm_1} &- \Pi_{\pm_1}(x))  f\big]^\dagger \gamma^0 \Pi_{\pm_2} g\\
                                                                                 &+ \big[\Pi_{\pm_1}(x) f\big]^\dagger \gamma^0  (\Pi_{\pm_2} - \Pi_{\pm_2}(y) ) g + f^\dagger \Pi_{\pm_1}(x) \gamma^0  \Pi_{\pm_2}(y)  g.
    \end{align*}
together with the pointwise null form type bound
    \begin{equation}\label{eqn:nullsymbolbound}|\Pi_{\pm_1}(x)
      \gamma^0 \Pi_{\pm_2}(y)| \lesa \theta(\pm_1 x, \pm_2 y) +
      \frac{\big| \pm_1 |x| \pm_2 |y| \big|}{\lr{x}
        \lr{y}}\end{equation}
then immediately implies, for instance, that
      \begin{equation}\label{eqn:null form bound I} \big\|
        \overline{\Pi_{\pm_1} R_{\kappa} \psi_{\lambda_1}} \Pi_{\pm_2} R_{\kappa'}\varphi_{\lambda_2} \big\|_{L^r_x} \lesa
        \alpha \| \psi \|_{L^a_x} \|\varphi
        \|_{L^b_x} \end{equation}
      where $\frac{1}{r} = \frac{1}{a}  +\frac{1}{b}$, $1<r, a, b < \infty$, and the caps $\kappa, \kappa'\in \mc{C}_\alpha$ satisfy $|\pm_1\kappa - \pm_2\kappa'| \lesa \alpha$, with $\alpha \gtrsim (\min\{\lambda_1, \lambda_2\})^{-1}$. Similarly, if we have the $V^2$ bound
	$$ \big\| \psi^\dagger \varphi \big\|_{L^r_x} \les \mb{C} \| \psi \|_{V^2_{\pm_1, M}} \| \varphi \|_{V^2_{\pm_2, M}} $$
        under some condition on $\supp \widehat{\psi}$ and
        $\supp \widehat{\varphi}$, then we also have, under the same
        conditions on $\supp \widehat{\psi}$ and
        $\supp \widehat{\varphi}$, the null form bound
        \begin{equation}\label{eqn:null form bound II} \big\|
          \overline{\Pi_{\pm_1}  R_{\kappa} \psi}
          \Pi_{\pm_2} R_{\kappa'} \varphi \big\|_{L^r_x} \lesa \alpha\mb{C} \|
          \psi \|_{V^2_{\pm_1, M}} \| \varphi \|_{V^2_{\pm_2,
              M}}. \end{equation}
These estimates are used frequently in the proof of Theorem \ref{thm:main bilinear}.\\

    Next, we recall the Strichartz estimates for the
    wave equation, which corresponds to \cite[Lemma 8.2]{Candy2016} and relies on \cite[Lemma 3.1]{Bejenaru2015} and     \cite[Appendix]{Sterbenz2007}.

\begin{lemma}[Wave Strichartz]\label{lem:wave strichartz}
  Let $m\g0$ and $2<q \les \infty$. If $0< \mu \les \lambda$, $N \g1$,
  and $\frac{1}{r} = \frac{1}{2} - \frac{1}{q}$ then for every
  $q \in Q_\mu$ we have
            \[ \| e^{\mp i t \lr{\nabla}_m} P_q P_\lambda f\|_{L^q_t L^r_x} \lesa \mu^{\frac{1}{2} - \frac{1}{r}} \lambda^{\frac{1}{2} - \frac{1}{r}} \| P_q P_\lambda f\|_{L^2_x}.\]
Moreover, if $\frac{1}{q} + \frac{2}{r} <1$ and $\epsilon>0$, we have
        \[ \| e^{\mp i t \lr{\nabla}_m}  P_\lambda H_N f \|_{L^q_t L^r_x} \lesa \lambda^{3(\frac{1}{2}-\frac{1}{r})- \frac{1}{q}} N^{\frac12+\epsilon} \| P_\lambda H_N f \|_{L^2_x}.\]
      \end{lemma}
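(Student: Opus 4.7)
My plan is to handle the two bounds separately. For the first (cube-localized) Strichartz I would use the standard trick of rescaling the cube to the unit cube and invoking the dispersive/Strichartz estimate for a quadratic phase. Fix a cube $q \in \mc{Q}_\mu$ and a rotation reducing matters to the case where $q$ is centered at $\xi_0 = (\lambda_0, 0, 0)$ with $\lambda_0 \approx \lambda$. Writing $\xi = \xi_0 + \mu \zeta$ with $|\zeta|\lesa 1$, the phase expands as
\[
\lr{\xi}_m \;=\; \lr{\xi_0}_m + \mu\, v_0\!\cdot\!\zeta \;+\; \mu^2 \Phi_{\mu,\lambda}(\zeta),
\]
where $v_0 = \xi_0/\lr{\xi_0}_m$ and $\Phi_{\mu,\lambda}$ is a smooth symbol with derivatives of size $1/\lambda$ in the tangential directions and $1/\lambda^2$ in the radial one. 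Translating in $x$ to cancel the linear term, stationary-phase (or equivalently the standard fixed-time dispersive estimate for the Klein--Gordon operator applied on the cube) gives a kernel bound of order $\mu^3 (1+\mu^2|t|/\lambda)^{-1}$ after rescaling, which by Hardy--Littlewood--Sobolev yields the $L^2_xL^2_t \to L^{r}_xL^{q}_t$ estimate with constant $\mu^{\frac{1}{2}-\frac{1}{r}}\lambda^{\frac{1}{2}-\frac{1}{r}}$. The admissibility condition $\tfrac{1}{r}=\tfrac{1}{2}-\tfrac{1}{q}$ with $q>2$ matches the $TT^*$ range in two transverse dimensions after rescaling.

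For the second bound I would choose the Knapp-type angular scale $\alpha \approx \lambda^{-1/2}$ and decompose $P_\lambda H_N f = \sum_{\kappa \in \mc{C}_\alpha} R_\kappa P_\lambda H_N f$. Each angular sector of opening $\alpha$ intersected with the annulus of radius $\lambda$ is contained in $\mc{O}(1)$ cubes in $\mc{Q}_\mu$ with $\mu \approx \alpha\lambda = \lambda^{1/2}$, so the first part of the lemma applies and gives
\[
\|e^{\mp it\lr{\nabla}_m} R_\kappa P_\lambda H_N f\|_{L^q_tL^r_x} \lesa \lambda^{\frac{3}{2}(\frac{1}{2}-\frac{1}{r})}\|R_\kappa P_\lambda H_N f\|_{L^2_x}.
\]
The angular concentration estimate \eqref{eqn:ang-con} applied on the physical side (or on the $L^2$ side, which is even simpler) bounds $\|R_\kappa P_\lambda H_N f\|_{L^2_x} \lesa (\alpha N)^{s}\|P_\lambda H_N f\|_{L^2_x}$ for any $0\les s < 1$, while almost-orthogonality of the caps together with Minkowski's inequality (using $q,r\geq 2$) converts the $\ell^2$ sum in $\kappa$ into the right-hand side with a factor $(\#\mc{C}_\alpha)^{\frac{1}{2}-\frac{1}{q}\wedge \frac{1}{r}}$. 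Optimising the split between the cap-count factor $\alpha^{-2}$ and the concentration factor $(\alpha N)^s$ gives the power $\lambda^{3(\frac{1}{2}-\frac{1}{r})-\frac{1}{q}}N^{\frac{1}{2}+\epsilon}$ under the strict admissibility $\tfrac{1}{q}+\tfrac{2}{r}<1$.

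The main technical point I expect to have to justify carefully is the first, rescaled Strichartz estimate: one has to verify that the quadratic symbol $\Phi_{\mu,\lambda}$ has nondegenerate Hessian after the rescaling $\zeta \mapsto \mu\zeta$ uniformly in the parameters $(\mu,\lambda)$ with $\mu\les\lambda$, and that the remainder terms in the Taylor expansion are under control (which is where the hypothesis $\mu\les\lambda$ really matters). Once this is in hand, the wave side of the argument is essentially a black box and the angular piece is a clean $TT^*$ plus Sobolev-on-$\sph^2$ argument; the $\epsilon$ loss in $N$ is the standard one coming from summing in the angular Sobolev index strictly below the endpoint given by \eqref{eqn:ang-con}.
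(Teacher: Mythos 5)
The paper does not prove Lemma~\ref{lem:wave strichartz} in-text; it simply cites \cite[Lemma 8.2]{Candy2016}, \cite[Lemma 3.1]{Bejenaru2015} and \cite[Appendix]{Sterbenz2007}, so there is no ``paper proof'' to compare against. That said, your plan for the first, cube-localised estimate is the standard and essentially correct one: rescale a $\mu$-cube at radius $\lambda$ so the phase becomes Schr\"odinger-type, obtain a fixed-time dispersive kernel bound of size $\mu^3(1+\mu^2|t|/\lambda)^{-1}$ (which does indeed follow since the Hessian of $\lr{\xi}_m$ has two eigenvalues of order $\lambda^{-1}$ at $|\xi|\approx\lambda$), then interpolate $L^1\to L^\infty$ with $L^2\to L^2$ and run $TT^*$ with Hardy--Littlewood--Sobolev in time; the exponent arithmetic closes to $(\mu\lambda)^{\frac12-\frac1r}$ and the uniformity issues you flag about the Taylor remainder are precisely what one must check.

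The second half, however, contains a concrete error. You assert that a sector of angular opening $\alpha\approx\lambda^{-1/2}$ intersected with the $P_\lambda$-annulus is covered by $\mc{O}(1)$ cubes of side $\mu=\alpha\lambda$. But the $P_\lambda$-annulus has radial thickness $\sim\lambda$, so the sector occupies a slab of dimensions roughly $\lambda\times\alpha\lambda\times\alpha\lambda$, which requires $\lambda/\mu=1/\alpha\approx\lambda^{1/2}$ cubes in $\mc{Q}_\mu$, not $\mc{O}(1)$. As a consequence the per-cap bound you write down, $\lambda^{\frac32(\frac12-\frac1r)}\|R_\kappa P_\lambda H_N f\|_{L^2_x}$, does not follow from the first estimate in a single step: there is an unaccounted radial sum over $\lambda^{1/2}$ cubes, and in addition the first estimate only holds at sharp admissibility $\frac{1}{r_0}=\frac12-\frac1q$, so quoting it at a free $r$ silently presumes a Bernstein step which changes the powers of $\lambda$ and $\mu$ and interacts with the cube count. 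The final ``optimisation of the split'' between the cap count $\alpha^{-2}$ and the concentration factor $(\alpha N)^s$ is asserted but not carried out, and with the corrected cube count I do not see how the stated exponent $\lambda^{3(\frac12-\frac1r)-\frac1q}N^{\frac12+\epsilon}$ emerges from this particular decomposition. You would need to either run the dispersive estimate directly on the full $\lambda\times\alpha\lambda\times\alpha\lambda$ slab (which has a different constant from the $\mu$-cube estimate), or carefully incorporate the radial orthogonality and redo the bookkeeping; as written, the second part does not constitute a proof.
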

We also require Strichartz
      estimates in the Klein-Gordon regime. The first bound can be found in, for instance, \cite[Lemma 3]{Machihara2003}, while the second is a special case of
        \cite[Theorem 1.1]{Cho2013} and corresponds to \cite[Lemma 8.3]{Candy2016}.

\begin{lemma}[Klein-Gordon Strichartz]\label{lem:KG strichartz}
  Let $m>0$. Then for $\frac{1}{4} \les \frac{1}{r} \les \frac{3}{10}$
  we have
        \[ \| e^{ \mp i t \lr{\nabla}_m} P_\lambda f \|_{L^r_{t,x}} \lesa \lambda^\frac{1}{2} \| P_\lambda f \|_{L^2_x}.\]
        On the other hand, if
        $\frac{3}{10}<\frac{1}{r} < \frac{5}{14}$ and $\epsilon>0$ we
        have
	\[ \| e^{ \mp i t \lr{\nabla}_m} P_\lambda H_N f  \|_{L^{r}_{t, x}} \lesa \lambda^{2-\frac{5}{r}} N^{7(\frac{1}{r} - \frac{3}{10})+\epsilon} \|P_\lambda H_N f\|_{L^2_x}.\]
      \end{lemma}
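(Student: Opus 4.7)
The first estimate is the classical Klein-Gordon Strichartz bound, and the natural plan is to apply the $TT^\ast$ method combined with a fixed-time dispersive estimate for the frequency-localized propagator. Concretely, one writes the kernel of $e^{\mp it\lr{\nabla}_m} P_\lambda$ as an oscillatory integral
\[
K_\lambda(t,x) = \int_{\RR^3} e^{i(x\cdot\xi \mp t\lr{\xi}_m)}\rho(|\xi|/\lambda)\,d\xi
\]
and applies stationary phase, treating the non-relativistic regime $\lambda\lesa m$ (where the phase behaves like $|\xi|^2/(2m)$, yielding Schr\"odinger-type decay) and the relativistic regime $\lambda \gtrsim m$ (where it behaves like $|\xi|$, yielding wave-type decay) separately. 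In either case one obtains $\|K_\lambda(t,\cdot)\|_{L^\infty_x} \lesa \lambda^{3}(1+\lambda|t|)^{-3/2}$, and Riesz-Thorin interpolation with $L^2$-conservation combined with Hardy-Littlewood-Sobolev (or Keel-Tao at the endpoint) yields the full admissible Strichartz range. Restricting to $L^r_{t,x}$ with $\tfrac14 \les \tfrac1r \les \tfrac{3}{10}$ gives the first bound with the $\lambda^{1/2}$ loss corresponding to the wave scaling line.

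For the second estimate, the range $\tfrac{3}{10}<\tfrac{1}{r}<\tfrac{5}{14}$ lies strictly beyond the admissible Strichartz region, so the Knapp obstruction prevents any purely spatial frequency-localized gain. The plan is to interpolate the endpoint bound at $\tfrac{1}{r}=\tfrac{3}{10}$ with improved estimates coming from angular regularity. More precisely, using a cap decomposition $P_\lambda H_N f = \sum_{\kappa\in \mc{C}_\alpha} R_\kappa P_\lambda H_N f$ with $\alpha \approx N^{-1}$, one applies the angular concentration inequality \eqref{eqn:ang-con} to trade the $H_N$ projector for an $N^s$ weight at the level of individual caps, then uses the cap-localized Strichartz estimate, whose Fourier support is essentially two-dimensional and therefore obeys better dispersive bounds than the global one. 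Square-summing over caps (which is where $\sigma = 0$ orthogonality on the sphere is used) and interpolating against the $\tfrac{1}{r}=\tfrac{3}{10}$ endpoint then produces the claimed scaling $\lambda^{2-5/r}$ together with the angular weight $N^{7(\tfrac{1}{r}-\tfrac{3}{10})+\epsilon}$.

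The main obstacle is identifying the correct angular exponent $7(\tfrac{1}{r}-\tfrac{3}{10})$, which does not arise from naive Sobolev embedding on $\sph^2$ but rather reflects a balance between (i) the dispersive gain on a single angular cap of aperture $\alpha\approx N^{-1}$, (ii) the $\sim N^2$ caps needed to cover an angular frequency shell of size $N$, and (iii) the square-summability of angular frequencies enforced by the $H_N$ projector. Tracking these three factors carefully through the interpolation, and absorbing the endpoint Sobolev loss on $\sph^2$ into the $N^\epsilon$, is the technical heart of the argument and matches the analysis carried out in the reference cited in the statement.
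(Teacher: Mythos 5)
The paper does not actually prove this lemma; it cites the literature. Immediately after the statement, the text reads that the first bound ``can be found in, for instance, [Machihara et al., Lemma 3], while the second is a special case of [Cho--Lee, Theorem 1.1] and corresponds to [Candy--Herr 2016, Lemma 8.3].'' So the comparison is between your from-scratch sketch and a citation, and the relevant question is whether your sketch would actually reproduce the quoted bounds. It would not in its current form.

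The concrete error is in your fixed-time dispersive estimate $\|K_\lambda(t,\cdot)\|_{L^\infty_x} \lesa \lambda^3 (1+\lambda|t|)^{-3/2}$. This is the wrong decay profile for the Klein--Gordon propagator at high frequency. For $|\xi|\approx\lambda\gtrsim m$ the Hessian of $\lr{\xi}_m$ has two angular eigenvalues of size $\lambda^{-1}$ and one radial eigenvalue of size $m^2\lambda^{-3}$, so the decay is anisotropic: the angular curvature contributes a wave-type factor $(1+\lambda|t|)^{-1}$ and the radial curvature contributes a Schr\"odinger-type factor roughly $(1+m^2|t|/\lambda)^{-1/2}$, giving $\|K_\lambda(t,\cdot)\|_{L^\infty_x} \lesa \lambda^3(1+\lambda|t|)^{-1}(1+|t|/\lambda)^{-1/2}$. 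Your single factor $(1+\lambda|t|)^{-3/2}$ overstates the decay when $|t|\lesa \lambda$ and understates it when $|t|\gtrsim\lambda$; running $TT^\ast$ with it does not produce the stated two-endpoint range $\tfrac14\les\tfrac1r\les\tfrac3{10}$ with uniform loss $\lambda^{1/2}$. The actual argument in Machihara et al.\ has to interpolate carefully between these two regimes (and between the $\lambda\lesa m$ and $\lambda\gtrsim m$ cases), which is precisely why the result is cited rather than rederived.

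For the second bound your high-level plan (cap decomposition at scale $\alpha\approx N^{-1}$, improved dispersive estimates per cap, square-summing using angular orthogonality) is indeed the spirit of Cho--Lee, but as written it is too vague to check: the angular concentration estimate \eqref{eqn:ang-con} is a fixed-time spatial inequality and does not by itself convert $H_N$ into an $N^s$ weight under the evolution, and the balance that actually produces the exponent $7(\tfrac1r-\tfrac3{10})$ comes from interpolating a nonadmissible single-cap Strichartz estimate (gained from the essentially two-dimensional Fourier support) against the global admissible endpoint at $\tfrac1r=\tfrac3{10}$, with careful bookkeeping of the square-function loss over caps. You correctly identify the three competing factors, but you never actually carry out the interpolation, which is where the specific numerology would either confirm or refute the proposal. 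As it stands the sketch neither matches the paper (which proves nothing) nor constitutes a verifiable proof.
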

Both Lemma \ref{lem:wave strichartz} and Lemma \ref{lem:KG strichartz} have analogues in $V^2_{\pm, m}$. This follows by decomposing into $U^2_{\pm, m}$ atoms, and applying the estimate for free solutions, see for instance the arguments used in \cite[Proposition 2.19 and Corollary 2.21]{Hadac2009}. For instance, under the assumptions in Lemma \ref{lem:wave strichartz}, the first inequality in Lemma \ref{lem:wave strichartz} implies that
     \[ \|  P_q P_\lambda u \|_{L^q_t L^r_x} \lesa \mu^{\frac{1}{2} - \frac{1}{r}} \lambda^{\frac{1}{2} - \frac{1}{r}} \| P_q P_\lambda u\|_{V^2_{\pm, m}}.\]
Similarly, the remaining bounds in Lemma \ref{lem:wave strichartz} and Lemma \ref{lem:KG strichartz} imply corresponding versions in $V^2_{\pm, m}$. \\

   We now turn to the bilinear estimates that we require in the proof of Theorem \ref{thm:main bilinear}. To simplify the gain of the $L^4$ type norm in a particular high modulation interaction, we use the following ``cheap'' bilinear $L^2_{t,x}$ estimate in $V^2$.

    \begin{theorem}\label{thm:cheap bilinear}
      Let $\mu \ll \lambda_1 \approx \lambda_2$. Then for any
      $0<\gamma \les 1$ we have
      \begin{align*} \big\| P_{\mu} \big(\overline{ \Pi_{\pm_1} \psi_{\lambda_1} }  &\Pi_{\pm_2} \varphi_{\lambda_2} \big) \big\|_{L^2_{t,x}} \\
        &\lesa \mu \Big( \frac{\mu}{\lambda_1}\Big)^{ - 2\gamma} \Big(
          \lambda_1^{-\frac{1}{2}} \| \psi_{\lambda_1} \|_{L^4_{t,x}} \lambda_2^{-\frac{1}{2}} \| \psi_{\lambda_2} \|_{L^4_{t,x}} \Big)^\gamma \Big(\| \psi_{\lambda_1} \|_{V^2_{\pm_1, M}}
          \|\varphi_{\lambda_2} \|_{V^2_{\pm_2, M}}\Big)^{1-\gamma}.
 \end{align*}
\end{theorem}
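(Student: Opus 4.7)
The inequality is a bilinear interpolation between a trivial Hölder endpoint at $\gamma=1$ and the classical bilinear $L^2$ improvement for the massive Dirac evolution at $\gamma=0$. The key observation is that since $\mu\ll \lambda_1\approx \lambda_2$, passing from the geometric mean of the two endpoint bounds to the form stated in the theorem only costs a factor $(\mu/\lambda_1)^{\gamma}\les 1$, which can be absorbed into the implicit constant.

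\textbf{Step 1 (trivial Hölder endpoint).} Using the $L^r_x$-boundedness of $P_\mu$ and of the spectral projections $\Pi_{\pm_i}$, Hölder's inequality in the spatial and temporal variables gives
\[
 \big\| P_{\mu}\big(\overline{\Pi_{\pm_1}\psi_{\lambda_1}}\Pi_{\pm_2}\varphi_{\lambda_2}\big)\big\|_{L^2_{t,x}}\lesa \|\psi_{\lambda_1}\|_{L^4_{t,x}}\|\varphi_{\lambda_2}\|_{L^4_{t,x}}. \tag{E1}
\]

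\textbf{Step 2 ($V^2$ bilinear $L^2$ endpoint).} We would invoke the high-high-low bilinear $L^2$ estimate for the half-wave evolutions $\mc{U}^{\pm_i}_M$, namely
\[
 \big\| P_{\mu}\big(\overline{\Pi_{\pm_1}\psi_{\lambda_1}}\Pi_{\pm_2}\varphi_{\lambda_2}\big)\big\|_{L^2_{t,x}}\lesa \mu\,\|\psi_{\lambda_1}\|_{V^2_{\pm_1,M}}\|\varphi_{\lambda_2}\|_{V^2_{\pm_2,M}}. \tag{E2}
\]
For free solutions this is the standard bilinear $L^2$ improvement, proved via a Fourier support/$L^2$-orthogonality argument on the convolution of two thin shells of radii $\approx \lambda_1\approx \lambda_2$: the resulting output distribution is concentrated on an $O(\mu)$-thickened piece of a smooth surface and an application of the Cauchy–Schwarz inequality in the transversal variable yields the linear $\mu$-gain. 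Transference to $V^2_{\pm_i,M}$ is then routine via atomic $U^2$-decomposition and the continuous embedding $U^2_{\pm,M}\hookrightarrow V^2_{\pm,M}$ used throughout the paper, cf.\ Section \ref{sec:prelim} and our earlier work \cite{Candy2016,Bejenaru2015}.

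\textbf{Step 3 (geometric mean and absorption).} Raising (E1) to the power $\gamma$, (E2) to the power $1-\gamma$, and multiplying yields
\[
 \big\| P_{\mu}\big(\overline{\Pi_{\pm_1}\psi_{\lambda_1}}\Pi_{\pm_2}\varphi_{\lambda_2}\big)\big\|_{L^2_{t,x}}\lesa \mu^{1-\gamma}\big(\|\psi_{\lambda_1}\|_{L^4_{t,x}}\|\varphi_{\lambda_2}\|_{L^4_{t,x}}\big)^{\gamma}\big(\|\psi_{\lambda_1}\|_{V^2_{\pm_1,M}}\|\varphi_{\lambda_2}\|_{V^2_{\pm_2,M}}\big)^{1-\gamma}.
\]
The right-hand side of the claimed estimate carries the prefactor
\[
\mu\Big(\frac{\mu}{\lambda_1}\Big)^{-2\gamma}(\lambda_1\lambda_2)^{-\gamma/2}\approx \mu^{1-2\gamma}\lambda_1^{\gamma},
\]
using $\lambda_1\approx\lambda_2$. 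The ratio between the power of $\mu$ obtained from the geometric mean and the one in the target bound equals $(\mu/\lambda_1)^{\gamma}$, which is $\les 1$ since $\mu\ll\lambda_1$. Hence the bound from the geometric mean dominates the stated one and the proof is complete.

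\textbf{Expected difficulty.} The only substantive input is the $V^2$-endpoint (E2); once this is granted, the theorem is essentially a bookkeeping exercise. Fortunately (E2) is by now a standard and extensively used estimate in the DKG literature and fits cleanly into the transference framework already set up in Section \ref{sec:prelim}, so no new technical obstacle arises here.
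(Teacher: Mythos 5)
Your Step 3 is a valid reduction: if one grants (E1) and (E2), the claimed inequality does follow, and you correctly compute the slack factor $(\mu/\lambda_1)^\gamma$. The problem is that (E2) as stated is not available, and the route you sketch to establish it fails in two distinct ways.

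First, the ``thin-shell / $L^2$-orthogonality'' argument does not produce the clean $\mu$-gain in the $\pm_1 = \pm_2$ case. When the two waves lie on the same half of the characteristic surface with spatial frequency $\approx \lambda_1$ and the output is restricted to frequency $\mu \ll \lambda_1$, the intersection is tangential rather than transverse and the scalar bilinear $L^2$ estimate gives only $(\mu\lambda_1)^{1/2}$, which is much larger than $\mu$. The paper's proof obtains the stronger bound $\mu(\mu/\lambda_1)^{1/2-\epsilon}$ in this case precisely by exploiting the null structure of the symbol $\Pi_{\pm_1}(\xi)\gamma^0 \Pi_{\pm_2}(\eta)$, which supplies an extra $\mu/\lambda_1$ from the angle restriction, combined with the cube-localised $L^4_{t,x}$ Strichartz bound. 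Your Step 1 discards exactly this structure by appealing only to $L^r$-boundedness of $\Pi_{\pm_i}$, so the gain is unavailable to you in Step 2.

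Second, the transference argument is oriented the wrong way. The embedding $U^2_{\pm,M}\hookrightarrow V^2_{\pm,M}$ states $\|u\|_{V^2}\lesa \|u\|_{U^2}$; it lets you estimate a $V^2$ norm by a $U^2$ norm, which is useless for upgrading a free-solution (hence $U^2$) bilinear estimate to a $V^2$ estimate. One would need the reverse inequality, which is false. The correct transfer, as the paper does in the $\pm_1 = -\pm_2$ case, interpolates the free-solution bilinear $L^2$ bound against the $L^4_{t,x}$ Strichartz bound in the spirit of \cite[Prop.~2.19, Cor.~2.21]{Hadac2009}, and this costs an $\epsilon$-power: the resulting $V^2$ bound is $\mu(\mu/\lambda_1)^{-\epsilon}$, not $\mu$.

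Because the theorem's right-hand side has the margin you noticed, the conclusion survives once the correct inputs are used: one takes the unified $V^2$ bound $\mu(\mu/\lambda_1)^{-\epsilon}$ (with the same-sign case in fact better), interpolates against (E1) with weight $\gamma$, and then chooses $\epsilon$ small depending on $\gamma$ so that the residual $(\mu/\lambda_1)^{\gamma - \epsilon(1-\gamma)}$ is still $\les 1$. This is what the paper does, splitting into the two $\pm$ cases with genuinely different arguments for each. As written, your proof asserts an intermediate estimate that is not established and for which the proposed justification is incorrect.
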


\begin{proof}
  We consider separately the cases $\pm_1=\pm_2$ and
  $\pm_1 = - \pm_2$. In the former case, the fact that the output
  frequencies are restricted to be of size $\mu$, implies that we can
  directly exploit the null structure together with the standard
  $L^4_{t,x}$ Strichartz bound to deduce that for every $\epsilon>0$
  we have
  \begin{align*}
    \big\| P_{\mu} \big(\overline{ \Pi_{\pm_1} \psi_{\lambda_1} }  \Pi_{\pm_2} \varphi_{\lambda_2} \big)\big\|_{L^2_{t,x}} &\lesa \frac{\mu}{\lambda_1} \sum_{\substack{q, q' \in Q_\mu \\ |q-q'| \approx \mu}}  \| P_q \psi_{\lambda_1} \|_{L^4_{t,x}} \| P_{q'} \varphi_{\lambda_2} \|_{L^4_{t,x}} \\
                                                                                                                                             &\lesa \mu \Big( \frac{\mu}{\lambda_1} \Big)^{\frac{1}{2}-\epsilon} \| \psi_{\lambda_1} \|_{V^2_{\pm_1, M}} \| \varphi_{\lambda_2} \|_{V^2_{\pm_2, M}}.
  \end{align*}	
  On the other hand, in the $\pm_1 = - \pm_2$ case, the frequency
  restriction implies that the free waves
  $ (-i \p_t \pm_1 \lr{\nabla}_M )\psi = (-i \p_t \pm_2 \lr{\nabla} )
  \varphi = 0$
  are fully transverse. In particular, for free waves, we have the
  bilinear estimate
  \begin{align*}
    \big\| P_{\mu} \big(\overline{ \Pi_{\pm_1} \psi_{\lambda_1} } \Pi_{\pm_2} \varphi_{\lambda_2} \big) \big\|_{L^2_{t,x}} \lesa \mu \| \psi_{\lambda_1}(0) \|_{L^2_x} \| \varphi_{\lambda_2}(0) \|_{L^2_x}
  \end{align*}
  see for instance \cite[Lemma 2.6]{Candy2016}. Arguing as in \cite[Proposition 2.19 and Corollary 2.21]{Hadac2009}, by interpolating this with the standard $L^4_{t,x}$ bound, we deduce that for every $\epsilon>0$ we have the $V^2_{\pm, M}$ bound
	$$ \big\| P_{\mu} \big[ \big( \Pi_{\pm_1} \psi_{\lambda_1} \big)^\dagger \gamma^0  \Pi_{\pm_2} \varphi_{\lambda_2} \big] \big\|_{L^2_{t,x}} \lesa \mu \Big( \frac{\mu}{\lambda_1} \Big)^{-\epsilon} \| \psi_{\lambda_1} \|_{V^2_{\pm_1, M}} \| \varphi_{\lambda_2} \|_{V^2_{\pm_2, M}}.$$
        Thus in either case of the $\pm$ cases, by $L^p$ interpolation, we
        obtain
	\begin{align*}
          \big\| P_{\mu} \big[ \big( \Pi_{\pm_1} \psi_{\lambda_1} \big)^\dagger \gamma^0  &\Pi_{\pm_2} \varphi_{\lambda_2} \big] \big\|_{L^2_{t,x}} \\
                                                                                          &\lesa
                                                                                            \mu
                                                                                            \Big(
                                                                                            \frac{\mu}{\lambda_1}\Big)^{-\epsilon(1-\gamma)
                                                                                            -
                                                                                            \gamma}
                                                                                            \Big(
                                                                                            \lambda_1^{-\frac{1}{2}}
                                                                                            \|
                                                                                            \psi_{\lambda_1}
                                                                                            \|_{L^4_{t,x}}
                                                                                            \lambda_2^{-\frac{1}{2}}
                                                                                            \|
                                                                                            \psi_{\lambda_2}
                                                                                            \|_{L^4_{t,x}}
                                                                                            \Big)^\gamma
                                                                                            \Big(\|
                                                                                            \psi_{\lambda_1}
                                                                                            \|_{V^2_{\pm_1,
                                                                                            M}}
                                                                                            \|\varphi_{\lambda_2}\|_{V^2_{\pm_2,
                                                                                            M}}\Big)^{1-\gamma}.
        \end{align*}
        Therefore, the result follows by choosing $\epsilon$ sufficiently
        small.
      \end{proof}

        The main bilinear input in the critical case is the following
        bilinear restriction type bound from \cite{Candy2017}, which
        extends the corresponding bound in \cite{Candy2016}. The key point in the following is that the estimate holds for functions in $V^2_{\pm, m}$, in the full bilinear range. This is a highly non-trivial observation, which, in contrast to the linear and bilinear estimates mentioned above, \emph{does not} follow from the same bounds for free homogeneous solutions. Instead it requires a more involved direct argument, see \cite{Candy2016, Candy2017} for further details.

\begin{theorem}\label{thm:bilinear small scale KG}
Let $\epsilon>0$, $1\les q, r \les2$, $\frac{1}{q} + \frac{2}{r}<2$. For all $m_1, m_2 \g 0$, $0< \alpha \les 1$, and  $\xi_0, \eta_0 \in \RR^3$ such that $\lr{\xi_0}_{m_1} \approx \lambda_1$, $\lr{\eta_0}_{m_2} \approx \lambda_2$, and
    \[ \frac{ \big| m_2 |\xi_0| - m_1 |\eta_0| \big|}{\lambda_1 \lambda_2} + \Big( \frac{|\xi_0| |\eta_0| \mp \xi_0 \cdot \eta_0}{\lambda_1 \lambda_2} \Big)^\frac{1}{2} \approx \alpha, \] and for all $u,v$ satisfying
    \[ \supp \widehat{u} \subset \big\{ \big| |\xi| - |\xi_0| \big| \ll  \beta \lambda_1 , \,\,  \big( |\xi| |\xi_0| - \xi \cdot \xi_0 \big)^\frac{1}{2} \ll \alpha  \lambda_1 \big\}, \quad \supp \widehat{v} \subset \big\{ \big| |\xi| - |\eta_0| \big| \ll  \beta  \lambda_2, \,\,  \big( |\xi| |\eta_0| - \xi \cdot \eta_0 \big)^\frac{1}{2} \ll \alpha  \lambda_2 \big\}\]
then we have the bilinear estimate
   \[ \| uv \|_{L^q_t L^r_x} \lesa \alpha^{2 - \frac{2}{r}  - \frac{2}{q}} \beta^{1-\frac{1}{r}}  \lambda_{min}^{3-\frac{3}{r} - \frac{1}{q}} \Big( \frac{\lambda_{max}}{\lambda_{min}}\Big)^{\frac{1}{q} - \frac{1}{2}+\epsilon} \| u  \|_{V^2_{\pm_1,m_1}} \| v \|_{V^2_{\pm_2, m_2}} \]
 where $\lambda_{min} = \min\{\lambda_1, \lambda_2\}$,
    $\lambda_{max} = \max\{\lambda_1, \lambda_2\}$, and $\beta = ( \frac{m_1}{\alpha \lambda_1} + \frac{m_2}{\alpha
          \lambda_2} + 1)^{-1}$.
\end{theorem}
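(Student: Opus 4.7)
The plan is to first establish the bound for free solutions $u = \mc{U}^{\pm_1}_{m_1}(t) f$, $v = \mc{U}^{\pm_2}_{m_2}(t) g$, and then lift it to $V^2_{\pm, m}$ via atomic decomposition. For the free-wave case, the space-time Fourier transform of the product $uv$ is supported on the Minkowski sum of two small caps of the characteristic hyperboloids $\tau + \lr{\xi}_{m_j} = 0$, $j=1,2$. Under the hypotheses of the theorem, the tangent planes to these surfaces at the reference points $\xi_0$, $\eta_0$ form an effective angle of order $\alpha$ (with the mass-defect term absorbed into this angle). Plancherel, together with a sharp volume computation of this sum set, produces the $L^2_{t,x}$ estimate with the geometric dependence encoded in $\alpha$, $\beta$, and the frequency scales; interpolating with a trivial Bernstein-based $L^\infty_{t,x}$ bound extends this to the full range $\frac{1}{q} + \frac{2}{r} < 2$.

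The extension from free solutions to $V^2_{\pm, m}$ is the main obstacle. A naive approach using the embedding $V^2 \hookrightarrow U^p$ for $p>2$ combined with $U^2$-atomic decomposition transfers the free-wave bound but loses a logarithm and a small fraction of the bilinear range. To recover the full range, one decomposes $u$, $v$ dyadically according to modulation using the $C^{\pm_j, m_j}_d$ multipliers; by Lemma \ref{lem:disposable} each piece $C^{\pm_j, m_j}_d u$ is bounded in $V^2$ and admits a natural rewriting as a superposition of $U^2$ atoms at modulation scale $d$. Applying the free-wave bilinear bound to each pair of pieces, summing in modulation via Cauchy--Schwarz, and exploiting the near-orthogonality between different modulation bands yields the required estimate.

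The delicate point is maintaining the sharp $\alpha$ and $\beta$ dependence at every scale: losing any fractional power of either small parameter would destroy the optimal exponents in the final bound, yet the free-wave transversality input must be applied uniformly across all modulation pieces. This requires that the Fourier support of each $C^{\pm_j, m_j}_d u$ inherit the original angular and radial localisation on the caps, which is precisely the content of the disposability property from Lemma \ref{lem:disposable}. The accumulated logarithmic cost of this decomposition produces the factor $(\lambda_{max}/\lambda_{min})^{\frac{1}{q} - \frac{1}{2} + \epsilon}$, which is the price of passing from the free-wave world to $V^2$ in the full bilinear range. A complete implementation, going through a Tao--Wolff style induction that tracks these geometric parameters at every dyadic step, is carried out in \cite{Candy2017}.
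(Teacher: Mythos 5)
The paper itself gives no proof of this statement: it is quoted verbatim from \cite{Candy2017}, and the surrounding text is nothing more than a citation plus a remark on the nature of the argument. That remark, however, directly contradicts the strategy you outline. The paper states that this estimate ``is a highly non-trivial observation, which, in contrast to the linear and bilinear estimates mentioned above, \emph{does not} follow from the same bounds for free homogeneous solutions. Instead it requires a more involved direct argument.'' Your whole plan --- prove the estimate for free solutions $\mathcal{U}^{\pm_j}_{m_j}(t)f$ and then transfer to $V^2$ by decomposing in modulation and applying the free-wave bound piece by piece --- is precisely the route the authors are saying is unavailable.

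The reason it is unavailable is worth spelling out. In the range $1 \les q \les 2$ relevant here, there is no embedding $V^2 \hookrightarrow U^q$, so the usual atomic transference does not even begin. Your proposed repair, a dyadic modulation decomposition $u = \sum_d C^{\pm_1,m_1}_d u$ followed by the free-wave bound applied to each pair $(C_{d_1}u, C_{d_2}v)$ and a Cauchy--Schwarz over $d_1, d_2$, fails because the $V^2$ norm controls only $\sup_d d^{1/2}\|C_d u\|_{L^2_{t,x}}$ and not the $\ell^2_d$ sum of $d^{1/2}\|C_d u\|_{L^2_{t,x}}$ (this is exactly the obstruction in passing from $V^2$ to a Besov-type $\dot{B}^{1/2}_{2,2}$ modulation space). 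So the square-summability you need to close the Cauchy--Schwarz step is not available, and the argument diverges. The factor $(\lambda_{max}/\lambda_{min})^{\frac{1}{q}-\frac{1}{2}+\epsilon}$ in the statement is not, as you suggest, a ``logarithmic cost of passing from free waves to $V^2$''; it is the scaling deficit of the bilinear restriction estimate at exponents below $L^2_t$, present already at the free-wave level, and the point of \cite{Candy2017} is to obtain it for $V^2$ data directly via an induction-on-scales/wave-packet argument adapted to $V^2$ atoms, not by lifting a free-wave bound. Your final sentence correctly defers the details to \cite{Candy2017}, but the mechanism you describe getting there is not the one that works, and the paper says so explicitly.
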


\section{Multilinear Estimates in the subcritical case}\label{sec:multi-sub}
Our aim of this section is to establish the following result, which applies to the non-resonant regime. Here, after rescaling, we have $m=1$ and $M>\frac12$ now.
        \begin{theorem}\label{thm:duhamel-sub}
          Let $M>\frac{1}{2}$. If $s_0>0$ is sufficiently small, there exists $0<\theta<1$ and $1<a<2$ and $C>1$, such that if $b=4(\frac{1}{a}-\frac{1}{2})$, for all $s\g s_0 >0$       \begin{equation}\label{eqn:thm duhamel sub:V psi}
                \begin{split}
                    \big\|\Pi_{\pm_1}\mathcal{I}_M^{\pm_1}[\phi \gamma^0\Pi_{\pm_2}\varphi]\big\|_{\mb{V}^{s}_{\pm_1,M}}
                        &\lesa  \sup_{\mu, \lambda_2 \ge 1}\Big(\|\phi_\mu\|_{L^4_{t,x}}\lambda_2^{s-s_0-\frac12}\|\varphi_{\lambda_2}\|_{L^4_{t,x}}\Big)^\theta
                        \Big(\|\phi\|_{\mb{V}^{\frac12+s_0}_{+,1}} \|\varphi\|_{\mb{F}_{\pm_2, M}^s}\Big)^{1-\theta} \\
                        &\qquad + \sup_{\mu,\lambda_2\ge1}\Big(\mu^{s-s_0}\|\phi_\mu\|_{L^4_{t,x}}\lambda_2^{-\frac12}
                        \|\varphi_{\lambda_2}\|_{L^4_{t,x}}\Big)^\theta
                        \Big(\|\phi\|_{\mb{V}^{\frac12 +s}_{+,1}} \|\varphi\|_{\mb{F}_{\pm_2, M}^{s_0}}\Big)^{1-\theta},
          \end{split}
        \end{equation}
          and
           \begin{equation}\label{eqn:thm duhamel sub:Y psi}
\begin{split}
            \big\|\Pi_{\pm_1}\mathcal{I}_M^{\pm_1}[\phi \gamma^0
            \Pi_{\pm_2}\varphi]\big\|_{\mb{Y}^{s}_{\pm_1,M}}
                &\lesa  \sup_{\mu, \lambda_2 \ge 1}\Big(\|\phi_\mu\|_{L^4_{t,x}}\lambda_2^{s-s_0-\frac12}\|\varphi_{\lambda_2}\|_{L^4_{t,x}}\Big)^\theta
            \Big(\|\phi\|_{\mb{V}^{\frac12+s_0}_{+,1}} \|\varphi\|_{\mb{V}_{\pm_2, M}^s}\Big)^{1-\theta} \\
&\qquad +\sup_{\mu, \lambda_2 \ge 1}\big(\mu^{s-s_0}\|\phi_\mu\|_{L^4_{t,x}}\lambda_2^{-\frac12}\|\varphi_{\lambda_2}\|_{L^4_{t,x}}\big)^\theta
            \big(\|\phi\|_{\mb{V}^{\frac12 +s}_{+,1}} \|\varphi\|_{\mb{V}_{\pm_2, M}^{s_0}}\big)^{1-\theta}.
          \end{split}
        \end{equation}
Similarly,
          \begin{equation}\label{eqn:thm duhamel sub:V phi}
\begin{split}
            \big\|\langle \nabla\rangle^{-1}\mathcal{I}_m^{+}[(\Pi_{\pm_1}\psi)^\dagger &\gamma^0 \Pi_{\pm_2}\varphi]\big\|_{\mb{V}^{\frac{1}{2}+s}_{+,1}} \\
              &\lesa  \sup_{\lambda_1,\lambda_2 \ge 1}\Big(\lambda_1^{-\frac{1}{2}} \|\psi_{\lambda_1} \|_{L^4_{t,x}}\lambda_2^{s-s_0-\frac{1}{2}} \|\varphi_{\lambda_2} \|_{L^4_{t,x}}\Big)^\theta \Big(\| \psi \|_{\mb{F}_{\pm_1, M}^{s_0}} \|\varphi\|_{\mb{F}_{\pm_2, M}^s}\Big)^{1-\theta}\\
              &\qquad + \sup_{\lambda_1,\lambda_2 \ge 1}\Big(\lambda_1^{s-s_0-\frac{1}{2}} \|\psi_{\lambda_1} \|_{L^4_{t,x}}\lambda_2^{-\frac{1}{2}} \|\varphi_{\lambda_2} \|_{L^4_{t,x}}\Big)^\theta \Big(\| \psi \|_{\mb{F}_{\pm_1, M}^s} \|\varphi\|_{\mb{F}_{\pm_2, M}^{s_0}}\Big)^{1-\theta}.
            \end{split}
          \end{equation}
        \end{theorem}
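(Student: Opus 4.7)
The plan is to reduce each of \eqref{eqn:thm duhamel sub:V psi}, \eqref{eqn:thm duhamel sub:Y psi}, and \eqref{eqn:thm duhamel sub:V phi} to trilinear integral estimates. Via the $V^2$ duality characterisation \eqref{eqn:energy ineq for V2}, together with a parallel dualisation of the $Y^{\pm,m}_\lambda$ norms against $L^{a'}_t L^2_x$, it suffices to bound, for dyadic $\mu,\lambda_1,\lambda_2\in 2^\NN$ corresponding to Littlewood--Paley pieces of $\phi$, a test function $\psi$, and $\varphi$ respectively, the trilinear form
\[ \int_\RR\int_{\RR^3} \phi_\mu \, \overline{\Pi_{\pm_1}\psi_{\lambda_1}}\,\gamma^0 \Pi_{\pm_2}\varphi_{\lambda_2}\, dx\,dt, \]
and then reassemble via Cauchy--Schwarz in the output frequency. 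The same scheme handles \eqref{eqn:thm duhamel sub:V phi} after relabelling, with both high-frequency factors now being spinors.

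For each triple $(\mu,\lambda_1,\lambda_2)$ I further localise the inputs to angular caps $\kappa\in\mc{C}_\alpha$, cubes $q\in\mc{Q}_{\alpha\lambda^2}$, and modulation bands $d\in 2^\ZZ$, and combine the null form bounds \eqref{eqn:null form bound I}, \eqref{eqn:null form bound II} (producing an angular gain $\alpha$) with the Strichartz estimates of Lemma \ref{lem:wave strichartz} and Lemma \ref{lem:KG strichartz}, exactly as in \cite{Bejenaru2015, Candy2016}. The non-resonance hypothesis $M>\tfrac12$ prevents true parallel low-modulation resonance, so one can afford to take the subcritical exponent $s_0>0$ small and rely on the dyadic gain $(\min/\max)^{s_0}$ to dominate borderline logarithmic losses.

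The new ingredient is the extraction of a small power $\theta\in(0,1)$ of the $L^4_{t,x}$ norms on the right-hand side. Wherever the scheme calls for a bilinear $L^2_{t,x}$ bound of the form
\[ \| u_1 u_2 \|_{L^2_{t,x}} \lesa B\, \| u_1 \|_{V^2_{\pm_1, M}} \| u_2 \|_{V^2_{\pm_2, M}}, \]
I interpolate it against the trivial H\"older bound $\|u_1 u_2\|_{L^2_{t,x}}\les \|u_1\|_{L^4_{t,x}}\|u_2\|_{L^4_{t,x}}$ to obtain
\[ \| u_1 u_2 \|_{L^2_{t,x}} \lesa B^{1-\theta}\big(\|u_1\|_{L^4_{t,x}}\|u_2\|_{L^4_{t,x}}\big)^{\theta}\big(\|u_1\|_{V^2_{\pm_1,M}}\|u_2\|_{V^2_{\pm_2,M}}\big)^{1-\theta}. \]
Choosing $\theta$ sufficiently small, the loss $B^{-\theta}$ consumes only a tiny fraction of the available subcritical margin in $\mu$, $\lambda_1$, $\lambda_2$, so that dyadic summation is preserved. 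The two terms on each right-hand side of \eqref{eqn:thm duhamel sub:V psi}--\eqref{eqn:thm duhamel sub:V phi} then correspond to the standard Leibniz splitting, placing the derivatives on $\varphi$ in one branch and on $\phi$ (or $\psi$) in the other.

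The main obstacle is the high-modulation, same-sign interaction with $\lambda_1\approx\lambda_2\gg\mu$ and $\pm_1=\pm_2$, where the angular null form offers no gain and Theorem \ref{thm:bilinear small scale KG} does not directly apply. Here one is forced to invoke Theorem \ref{thm:cheap bilinear}, whose conclusion already contains an internal $L^4_{t,x}$--$V^2$ interpolation parameter $\gamma$. The delicate point is to coordinate $\gamma$ with the outer interpolation exponent $\theta$ so that simultaneously (i) the $\mathbf{Y}^{s_0}_{\pm_1,M}$ component is recovered through the $1/a$--weight on modulations, (ii) the derivative splittings $\mu^{s-s_0}$, $\lambda^{s-s_0-\frac12}$ demanded in \eqref{eqn:thm duhamel sub:V psi} emerge from the frequency factor $B^{1-\theta}$, and (iii) $\ell^2$-summation in the output frequency survives. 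Once this balance is struck, the dyadic estimates assemble to give the claimed bounds.
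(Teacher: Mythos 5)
Your high-level scheme matches the paper's: dualise via \eqref{eqn:energy ineq for V2}, decompose into modulation pieces $A_0,A_1,A_2$, localise to caps and cubes, exploit the null form bounds and Strichartz estimates, and then sum dyadically using the subcritical margin $s_0>0$. But there is a genuine gap in the mechanism you use to extract the $L^4_{t,x}$ factors, and it is exactly the point at which Theorem~\ref{thm:duhamel-sub} differs from the earlier bounds of \cite{Bejenaru2015,Candy2016}.

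Your proposal is to interpolate the bilinear $L^2_{t,x}$ estimate $\| u_1 u_2\|_{L^2_{t,x}} \lesa B \|u_1\|_{V^2}\|u_2\|_{V^2}$ against H\"older, producing $\big(\|u_1\|_{L^4}\|u_2\|_{L^4}\big)^\theta$. This only yields $L^4$ norms of the \emph{two} functions in the bilinear pair; the third function, the one placed in $L^2_{t,x}$ via its modulation weight, contributes nothing but its $V^2$ norm. Now look at what \eqref{eqn:thm duhamel sub:V psi} requires: after dualising, the test function is $\psi$ and the right-hand side must contain $\|\phi_\mu\|_{L^4}^\theta\,\|\varphi_{\lambda_2}\|_{L^4}^\theta$. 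In the $A_0$ case the high-modulation function is $\phi_\mu$ and your bilinear pair is $(\psi,\varphi)$, so you get $L^4$ of $\psi$ (useless, since $\psi$ is the dual test function and is only bounded in $V^2$) and $L^4$ of $\varphi$, but not $L^4$ of $\phi_\mu$. Since $\|\phi_\mu\|_{L^4_{t,x}}$ is \emph{controlled by} $\mu^{1/2}\|\phi_\mu\|_{V^2_{+,1}}$ and not conversely, you cannot upgrade the resulting factor $\mu^{\theta/2}\|\phi_\mu\|_{V^2_{+,1}}^\theta$ to the required $\|\phi_\mu\|_{L^4_{t,x}}^\theta$. The same problem recurs in the $A_2$ case, where $\varphi$ sits at high modulation. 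The paper closes this gap by pairing each such estimate with a \emph{second}, Klein--Gordon-Strichartz flavour (placing the high-modulation factor in an $L^r_{t,x}$ Strichartz norm with $\tfrac{10}{3}\les r<4$ and interpolating that factor between $L^{10/3}$ and $L^4$) and then interpolating between the two flavours. That double interpolation is what produces the full quantity $\mb{A}$ containing all three $L^4_{t,x}$ norms in Theorem~\ref{thm:trilinear freq loc subcrit}, and is the ingredient your argument is missing. You should also be more precise about the $\mb{Y}^{s}_{\pm_1,M}$ bound: it is not a straightforward duality against $L^{a'}_t L^2_x$, but in the range $\mu\gg\lambda_2$ (or $\lambda_1\gg\lambda_2$) requires the additional splitting \eqref{eqn-proof of F control subcrit-Lp bound wave high} and a separate argument, since the natural trilinear estimate \eqref{eqn:tri freq loc subcrit:Y} only produces a $Y^{\pm_2,M}_{\lambda_2}$ norm on the right, which is not controlled by the $V^2$ norm alone.
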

        The proof, which we postpone to Subsection
        \ref{subsec:proof-duhamel-sub} below, relies on trilinear
        estimates.

        \subsection{A subcritical trilinear
          estimate}\label{subsec:tri-sub}
Here, we consider frequency localised estimates and use the shorthand notation
$f_\lambda=P_\lambda f$.
\begin{theorem}\label{thm:trilinear freq loc subcrit}
  Let $M>\frac{1}{2}$, $0<\varrho \ll 1$, $\frac{5}{3}<a<2$ and $0<b<\frac{\varrho}{4}$.  Let $\phi:\RR^{1+3} \rightarrow \CC$, and $\varphi, \psi: \RR^{1+3} \rightarrow \CC^4$ such that $\Pi_{\pm_1} \psi = \psi$ and $\Pi_{\pm_2} \varphi = \varphi$. Define
          $$ \mb{A}= \| \phi_\mu\|_{L^4_{t,x}} \lambda_1^{-\frac{1}{2}} \| \psi_{\lambda_1} \|_{L^4_{t,x}} \lambda_2^{-\frac{1}{2}} \| \varphi_{\lambda_2} \|_{L^4_{t,x}}. $$
  There exists $\theta_0\in (0,1)$ such that
   \begin{equation}\label{eqn:tri freq loc subcrit:V2 main} \Big| \int_{\RR^{1+3}} \phi_\mu \overline{\psi_{\lambda_1}} \varphi_{\lambda_2} dx dt \Big| \lesa  \mu^\varrho \Big( \frac{\mu}{\max\{\lambda_1, \lambda_2\}}\Big)^{\frac{1}{10}}  \mb{A}^{\theta_0} \Big( \mu^{\frac{1}{2}} \| \phi_{\mu} \|_{V^2_{+, 1}} \| \psi_{\lambda_1} \|_{V^2_{\pm_1,M}}\| \varphi_{\lambda_2} \|_{V^2_{\pm_2, M}}\Big)^{1-\theta_0} .
  \end{equation}
  If $\lambda_1 \gg \lambda_2$, we can improve this to
   \begin{equation}\label{eqn:tri freq loc subcrit:V2-A0}
        \begin{split}
          \Big| \int_{\RR^{1+3}} \phi_\mu \overline{\psi_{\lambda_1}} \varphi_{\lambda_2} - \sum_{d \lesa \lambda_2} C_{\les d}&\phi_\mu \overline{\mc{C}_{\les d}^{\pm_1}\psi_{\lambda_1}} \mc{C}_d^{\pm_2} \varphi_{\lambda_2} dx dt \Big| \\
                &\lesa \lambda_2^{\varrho} \Big( \frac{\lambda_2}{\lambda_1}\Big)^{^{\frac{1}{10}}}  \mb{A}^{\theta_0} \Big( \mu^{\frac{1}{2}} \| \phi_{\mu} \|_{V^2_{+, 1}} \| \psi_{\lambda_1} \|_{V^2_{\pm_1,M}}\| \varphi_{\lambda_2} \|_{V^2_{\pm_2, M}}\Big)^{1-\theta_0} .
        \end{split}
   \end{equation}
   and
     \begin{equation}\label{eqn:tri freq loc subcrit:Y}
       \Big| \int_{\RR^{1+3}} \sum_{d \lesa \lambda_2} C_{\les d}\phi_\mu \overline{\mc{C}_{\les d}^{\pm_1}\psi_{\lambda_1}} \mc{C}_d^{\pm_2} \varphi_{\lambda_2} dx dt \Big|
                \lesa \lambda_2^{\varrho} \Big( \frac{\lambda_2}{\lambda_1}\Big)^{\frac{1}{4}(\frac{1}{a}-\frac{1}{2})}   \mb{A}^{\theta_0} \Big( \mu^{\frac{1}{2}} \| \phi_{\mu} \|_{V^2_{+, 1}} \| \psi_{\lambda_1} \|_{V^2_{\pm_1,M}}\| \varphi \|_{Y^{\pm_2, M}_{\lambda_2}}\Big)^{1-\theta_0}
     \end{equation}
  Similarly, if $\lambda_1 \ll \lambda_2$, we have
   \begin{equation}\label{eqn:tri freq loc subcrit:V2-A0 alt}
        \begin{split}
          \Big| \int_{\RR^{1+3}} \phi_\mu \overline{\psi_{\lambda_1}} \varphi_{\lambda_2} - \sum_{d \lesa \lambda_1} C_{\les d}&\phi_\mu \overline{\mc{C}_{d}^{\pm_1}\psi_{\lambda_1}} \mc{C}_{\les d}^{\pm_2} \varphi_{\lambda_2} dx dt \Big| \\
                &\lesa \lambda_1^{\varrho} \Big( \frac{\lambda_1}{\lambda_2}\Big)^{\frac{1}{10}}   \mb{A}^{\theta_0} \Big( \mu^{\frac{1}{2}} \| \phi_{\mu} \|_{V^2_{+, 1}} \| \psi_{\lambda_1} \|_{V^2_{\pm_1,M}}\| \varphi_{\lambda_2} \|_{V^2_{\pm_2, M}}\Big)^{1-\theta_0} .
        \end{split}
   \end{equation}
   and
     \begin{equation}\label{eqn:tri freq loc subcrit:Y alt}
       \Big| \int_{\RR^{1+3}} \sum_{d \lesa \lambda_1} C_{\les d}\phi_\mu \overline{\mc{C}_{d}^{\pm_1}\psi_{\lambda_1}} \mc{C}_{\les  d}^{\pm_2} \varphi_{\lambda_2} dx dt \Big|
                \lesa \lambda_1^{\varrho} \Big( \frac{\lambda_1}{\lambda_2}\Big)^{\frac{1}{4}(\frac{1}{a}-\frac{1}{2})}  \mb{A}^{\theta_0} \Big( \mu^{\frac{1}{2}} \| \phi_{\mu} \|_{V^2_{+, 1}} \| \psi \|_{Y^{\pm_1,M}_{\lambda_1}} \| \varphi_{\lambda_2} \|_{V^2_{\pm_2, M}}\Big)^{1-\theta_0}.
     \end{equation}
\end{theorem}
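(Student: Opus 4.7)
The plan is to establish these trilinear bounds by a dyadic frequency--modulation--angular decomposition, combining the cheap bilinear bound (Theorem \ref{thm:cheap bilinear}) and the bilinear restriction estimate (Theorem \ref{thm:bilinear small scale KG}) with the null-form identities (\ref{eqn:nullformbound1})--(\ref{eqn:nullsymbolbound}), and then interpolating between a pure $V^2$-type critical bound and an enhanced $L^4_{t,x}$-weighted bound to obtain the $\mb{A}^{\theta_0}$ refinement. The Fourier support of $\overline{\psi_{\lambda_1}}\varphi_{\lambda_2}$ forces $\mu \lesa \max(\lambda_1, \lambda_2)$, so I would split the analysis into the high-high-to-low regime $\mu \ll \lambda_1 \approx \lambda_2$ handled by (\ref{eqn:tri freq loc subcrit:V2 main}), and the two resonant regimes $\lambda_1 \gg \lambda_2$ (with $\mu \approx \lambda_1$) and $\lambda_2 \gg \lambda_1$ (with $\mu \approx \lambda_2$) covered by (\ref{eqn:tri freq loc subcrit:V2-A0})--(\ref{eqn:tri freq loc subcrit:Y alt}).

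In the high-high-to-low regime, the main tool is Theorem \ref{thm:cheap bilinear} applied directly to the pair $\overline{\psi_{\lambda_1}}\varphi_{\lambda_2}$, producing $L^4_{t,x}$ factors on $\psi$ and $\varphi$ to any small power $\gamma$ and absorbing the $(\mu/\lambda_1)^{-2\gamma}$ loss using the surplus high-low gain already present in the $V^2$ scaling. The $\phi_\mu$-slot is then paired off by H\"older in space-time, with the wave Strichartz bound $\|\phi_\mu\|_{L^4_{t,x}} \lesa \mu^{1/2} \|\phi_\mu\|_{V^2_{+,1}}$ providing one endpoint and a pure $L^\infty_t L^2_x$--$L^1_t L^2_x$ energy-Strichartz pairing the other. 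The $\|\phi_\mu\|_{L^4_{t,x}}^{\theta_0}$ factor needed to complete $\mb{A}^{\theta_0}$ is obtained by $L^p$-interpolation within this scheme, exploiting the elementary fact that $L^4_{t,x}$ is controlled by $V^2_{+,1}$ at the scaling weight $\mu^{1/2}$.

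For the resonant regime $\lambda_1 \gg \lambda_2$ (the case $\lambda_2 \gg \lambda_1$ being symmetric), the dispersion-relation computation forces the maximum modulation of the three inputs to satisfy $d_{\max} \gtrsim \lambda_2$ (using the non-resonance $M > \tfrac{1}{2}$), so the trilinear form splits naturally into the low-modulation main term displayed on the left of (\ref{eqn:tri freq loc subcrit:V2-A0}) and a remainder where at least one modulation exceeds $\lambda_2$. The remainder is closed by modulation orthogonality and the Bernstein-type bound $\|C_d^{\pm,m} u\|_{L^2_t L^2_x} \lesa d^{-1/2}\|u\|_{V^2_{\pm,m}}$, trading the large modulation $d$ for a harmless $d^{-1/2}$ that is summable. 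The low-modulation main term is estimated by applying Theorem \ref{thm:bilinear small scale KG} to a transverse pair among $\phi_\mu, \psi_{\lambda_1}, \varphi_{\lambda_2}$ (the non-resonance providing the angular separation via (\ref{eqn:nullformbound1})--(\ref{eqn:nullsymbolbound})), combined with H\"older or Strichartz in the remaining slot; the high-low gain $(\lambda_2/\lambda_1)^{1/10}$ then arises from the wave Strichartz factor at the low frequency $\lambda_2$. The $\mb{Y}$-bounds (\ref{eqn:tri freq loc subcrit:Y}) and (\ref{eqn:tri freq loc subcrit:Y alt}) follow the same decomposition, but use $\|C_d^{\pm, m} u\|_{L^a_t L^2_x} \lesa d^{-1/a}\|u\|_{V^2_{\pm,m}}$ at each modulation scale to match the $Y^{\pm, M}_\lambda$ definition.

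The main obstacle is producing the $L^4_{t,x}$ factors on $\psi_{\lambda_1}$ and $\varphi_{\lambda_2}$ in the resonant regime, where Theorem \ref{thm:cheap bilinear} does not directly apply because the output of $\overline{\psi_{\lambda_1}}\varphi_{\lambda_2}$ is concentrated at the large scale $\lambda_1$ rather than a low scale. The workaround is to exploit instead the transversality of $\phi_\mu$ with $\varphi_{\lambda_2}$ (or with $\psi_{\lambda_1}$): Theorem \ref{thm:bilinear small scale KG} yields a bilinear $L^2_{t,x}$ bound in $V^2$ for this pair, which can then be interpolated with a plain H\"older $L^4_{t,x}\cdot L^4_{t,x}\cdot L^4_{t,x}$ estimate (valid after the null-form angular gain from (\ref{eqn:null form bound I})) to produce the desired $\mb{A}^{\theta_0}$ hybrid refinement. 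Summability of the modulation and angular dyadic sums, together with positivity of the high-low exponents $\tfrac{1}{10}$ and $\tfrac{1}{4}(\tfrac{1}{a} - \tfrac{1}{2})$, constrains the parameters to $a \in (\tfrac{5}{3}, 2)$ and $0 < b < \varrho/4$ with $\varrho$ small, matching the hypotheses of the theorem.
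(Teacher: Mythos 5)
Your outline captures the broad strokes --- dyadic frequency--modulation decomposition, null-form angular gain, Strichartz estimates, and $L^p$-interpolation to produce the $\mb{A}^{\theta_0}$ refinement --- but it stumbles on two essential mechanisms and contains one claim that is simply false.

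First, making Theorem \ref{thm:cheap bilinear} the main tool in the high-high-to-low regime $\mu \lesa \lambda_1 \approx \lambda_2$ does not close the small-modulation subcase $d \lesa \mu$. That theorem carries a \emph{loss} $(\mu/\lambda_1)^{-2\gamma}$, and pairing its $L^2_{t,x}$ output against $\|C_d \phi_\mu\|_{L^2_{t,x}} \lesa d^{-\frac{1}{2}}\|\phi_\mu\|_{V^2}$ and summing $d^{-1/2}$ over $\mu^{-1}\lesa d \lesa \mu$ gives, at best, $\mu^{1/2}\cdot\mu\cdot(\mu/\lambda_1)^{-2\gamma}$ times the $V^2$ norms. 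Against the target $\mu^{\varrho}(\mu/\lambda_1)^{1/10}\mu^{1/2}$ this leaves an unabsorbed factor $\mu^{1-\varrho}(\mu/\lambda_1)^{-2\gamma-1/10}$, which is never $\lesa 1$. There is no ``surplus high-low gain already present in the $V^2$ scaling'': the gain $(\mu/\lambda_1)^{1/10}$ has to be produced actively, by decomposing into angular caps of size $\alpha\approx(d\mu/\lambda_1^2)^{1/2}$ and cubes of size $\mu$, extracting a factor $\alpha$ from the null form bound (\ref{eqn:null form bound I}), and using square-summed cap/cube $L^4_{t,x}$-Strichartz estimates. Theorem \ref{thm:cheap bilinear} is only useful in the complementary region $d\gg\mu$, where the extra $d^{-1/2}$ is strong enough to dominate its loss.

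Second, the non-resonance $M>\frac{1}{2}$ does \emph{not} force $d_{\max}\gtrsim\lambda_2$ when $\lambda_1\gg\lambda_2$; it forces $d_{\max}\gtrsim\lambda_2^{-1}$, two orders of dyadic scale weaker. This is a substantive error: if $d_{\max}\gtrsim\lambda_2$ were true, the modulation range $\lambda_2^{-1}\lesa d\lesa\lambda_2$ --- precisely where the main work lies and where the $Y^{\pm_2,M}_{\lambda_2}$ norm in (\ref{eqn:tri freq loc subcrit:Y}) is needed --- would be vacuous. The whole point of subtracting the $C_{\les d}\phi_\mu\overline{\mc{C}^{\pm_1}_{\les d}\psi_{\lambda_1}}\mc{C}^{\pm_2}_d\varphi_{\lambda_2}$ piece in (\ref{eqn:tri freq loc subcrit:V2-A0}) is that $\varphi_{\lambda_2}$ can carry the dominant modulation anywhere up to $\lambda_2$, and that interaction must be treated separately via the $Y$-norm. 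You also omit the decomposition into $A_0,A_1,A_2$ according to which of the three inputs has dominant modulation; that structure is what lets you put exactly one factor into $L^2_{t,x}$ (gaining $d^{-1/2}$) and organizes the entire case analysis. Finally, you reach for the bilinear restriction estimate of Theorem \ref{thm:bilinear small scale KG} in the resonant regimes, but in the subcritical case the $\mu^\varrho$ slack means this is unnecessary: the wave/Klein--Gordon Strichartz estimates (Lemmas \ref{lem:wave strichartz}--\ref{lem:KG strichartz}) together with the modulation lower bound $d\gtrsim(\min\{\mu,\lambda_1,\lambda_2\})^{-1}$ already suffice, and the $L^4_{t,x}$ factors are produced by interpolating the exponent in the Klein--Gordon Strichartz estimate around $L^{10/3}_{t,x}$ rather than by any bilinear restriction input.
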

\begin{proof}
The first step is to decompose the trilinear product into the modulation localised terms
  \begin{align*} \phi_{\mu} \big( \Pi_{\pm_1}& \psi_{\lambda_1}\big)^\dagger \gamma^0 \Pi_{\pm_2} \varphi_{\lambda_2} \\
                                             &=\sum_{d} C_d \phi_{\mu} \big( \mc{C}_{\ll d}^{\pm_1} \psi_{\lambda_1}\big)^\dagger \gamma^0 \mc{C}_{\ll d}^{\pm_2} \varphi_{\lambda_2} + C_{\lesa d} \phi_{\mu} \big( \mc{C}_{ d}^{\pm_1} \psi_{\lambda_1}\big)^\dagger \gamma^0 \mc{C}_{\lesa d}^{\pm_2} \varphi_{\lambda_2}+C_{\lesa d} \phi_{\mu} \big( \mc{C}_{\lesa d}^{\pm_1} \psi_{\lambda_1}\big)^\dagger \gamma^0 \mc{C}_{d }^{\pm_2} \varphi_{\lambda_2}\\
                                             &=\sum_d A_0 + A_1 + A_2.
  \end{align*}
  We now consider separately the small modulation cases
	$$ \mu \lesa \lambda_1 \approx \lambda_2 \text{ and } d \lesa \mu, \qquad \qquad \mu  \gg \min\{ \lambda_1, \lambda_2\} \text{ and } d \lesa \min\{\lambda_1, \lambda_2\} $$
        and the high modulation cases
	$$ \mu \lesa \lambda_1 \approx \lambda_2 \text{ and } d \gg \mu, \qquad \qquad \mu \gg \min\{ \lambda_1, \lambda_2\} \text{ and } d \gg \min\{\lambda_1, \lambda_2\}. $$
Due to the $L^4_{t,x}$ Strichartz inequality we have the obvious bound
        $$\mb{A} \lesa  \mu^\frac{1}{2}\| \phi_{\mu} \|_{V^2_{+, 1}} \| \psi_{\lambda_1} \|_{V^2_{\pm_1,M}}\| \varphi_{\lambda_2} \|_{V^2_{\pm_2, M}}.$$
Therefore, if we have the bounds \eqref{eqn:tri freq loc subcrit:V2 main}, \eqref{eqn:tri freq loc subcrit:V2-A0}, and \eqref{eqn:tri freq loc subcrit:V2-A0 alt} for some $\theta_0$, we can always replace $\theta_0$ with a smaller factor. In particular, in each of the various cases considered below, it is enough to get some (potentially very small) power of $\mb{A}$, the factor $\theta_0$ can then be taken to the minimum of the powers obtained. In the remaining estimates involving the $\|\cdot\|_{Y_{\lambda_j}^{\pm_j,M}}$-norms, as this norm \emph{does not} give control the $L^4_{t,x}$, we directly verify the fact that it is possible to make the exponent of $\mb{A}$ smaller if needed.\\

Before we start the case by case analysis, we recall some facts from the modulation analysis for the Dirac-Klein-Gordon system \cite{D'Ancona2007b,Bejenaru2015,Candy2016}. As in \cite[Subsection 8.2]{Candy2016} we define the modulation function
\[\mathfrak{M}_{\pm_1,\pm_2}(\xi,\eta)=|\lr{\xi-\eta}\mp_1\lr{\xi}\pm_2\lr{\eta}|\]
where we take $\xi \in \supp  \widehat{\psi}$, $\eta \in \supp \widehat{\varphi}$, and $\xi - \eta \in \supp \widehat{\phi}$. A computation shows that, for any $j=0,1,2$,
\begin{equation}\label{eq:m-b}
\int_{\RR^{1+3}}A_j dxdt\ne 0 \quad \text{ implies }  \quad \mathfrak{M}_{\pm_1,\pm_2}(\xi,\eta)\lesa d \text{ within the domain of integration.}
\end{equation}
Furthermore, since $m=1$ and $M>\frac12$ we are in the non-resonant case where
\begin{equation}\label{eq:global-nr}
\mathfrak{M}_{\pm_1,\pm_2}\gtrsim (\min\{\mu,\lambda_1,\lambda_2\})^{-1}.
\end{equation}
In particular, in the case by case analysis below, we may assume that $d \gtrsim (\min\{\mu,\lambda_1,\lambda_2\})^{-1}$. \\

\textbf{Case 1: $\mu \lesa \lambda_1 \approx \lambda_2$ and $d \lesa \mu$.} We first observe that the sum over the modulation is restricted to the region $\mu^{-1} \lesa d \lesa \mu$. Moreover, the resonance identities in \cite[Lemma 8.7]{Candy2016} together with \eqref{eq:m-b} imply that
\begin{equation}\label{eq:res-id1}
 d \gtrsim \mathfrak{M}_{\pm_1,\pm_2}(\xi,\eta) \gtrsim \frac{\lambda_1^2}{\mu}\theta^2(\pm_1\xi,\pm_2\eta)+\mu\theta^2(\xi-\eta,\pm_1\xi)+\mu\theta^2(\xi-\eta,\pm_2\eta).
\end{equation}
 Consequently, the angle between the Fourier supports of $\psi$ and $\varphi$
        must be of size $\alpha = (\frac{d\mu}{\lambda_1^2})^\frac{1}{2}$.
        In particular, decomposing $\psi$ and $\varphi$ into caps of
        size $\alpha$, and cubes of size $\mu$, applying H\"older's
        inequality, and using the null form bound (\ref{eqn:null form
          bound I}) we deduce that for every $0<\delta<1$
	\begin{align}
         & \Big| \int_{\RR^{1+3}} A_0 dx dt \Big| \lesa \alpha \sum_{\substack{q, q' \in \mc{Q}_\mu \\ |q-q'| \approx \mu}} \sum_{\substack{\kappa, \kappa'\in \mc{C}_\alpha \\ |\pm_1 \kappa - \pm_2  \kappa'| \approx \alpha }} \| C_d \phi_\mu \|_{L^2_{t,x}} \| P_q R_\kappa \psi_{\lambda_1} \|_{L^4_{t,x}} \| P_{q'} R_{\kappa'} \varphi_{\lambda_2} \|_{L^4_{t,x}} \notag \\
                                                 \lesa{}& \Big( \frac{d}{\mu}\Big)^{-2\delta} \Big( \frac{\mu}{\lambda_1} \Big)^{\frac{1}{2} -8\delta}  \mu^{\frac{1}{2}} \|\phi_\mu \|_{V^2_{+,1}}\Big( \lambda_1^{-\frac{1}{2}}\lambda_2^{-\frac{1}{2}} \| \psi_{\lambda_1} \|_{L^4_{t,x}} \| \varphi_{\lambda_2} \|_{L^4_{t,x}}\Big)^{\delta} \Big(\| \psi_{\lambda_1} \|_{V^2_{\pm_1, M}}  \| \varphi_{\lambda_2} \|_{V^2_{\pm_2, M}}\Big)^{1-\delta} \label{eqn:thm trilinear subcrit:case 1 A_0}
	\end{align}
        where we absorbed the square over caps and cubes using
        \cite[Lemma 8.6]{Candy2016}, which gives, in particular
  \begin{equation}\label{eqn:thm trilinear subcrit:example of square sum control}\Big( \sum_{q \in \mc{Q}_{\mu} }\sum_{\kappa \in \mc{C}_{\alpha}} \| P_q R_{\kappa} \psi_{\lambda_1} \|_{L^4_{t,x}}^2 \Big)^{\frac{1}{2}} \lesa \alpha^{-2\delta} \Big(\frac{\mu}{\lambda_1}\Big)^{-2\delta}(\mu \lambda_1 )^{\frac{1}{4}}\Big( \lambda_1^{-\frac{1}{2}} \| \psi_{\lambda_1} \|_{L^4_{t,x}}\Big)^{\delta} \| \psi_{\lambda_1} \|_{V^2_{\pm_1, M}}^{1-\delta} .\end{equation}
 On the other hand, applying the Klein-Gordon Strichartz estimate and \eqref{eq:res-id1}, we deduce that for
 $\frac{10}{3} \les r <4$ and
 $\beta = ( \frac{d}{\mu} )^{\frac{1}{2}}$
 \begin{align*}
   \Big| \int_{\RR^{1+3}} A_0 dx dt \Big| &\lesa \alpha  \sum_{\substack{\kappa, \kappa'\in \mc{C}_\alpha \\ |\pm_1 \kappa - \pm_2  \kappa'| \approx \alpha }}  \sum_{\substack{\kappa''\in \mc{C}_\beta \\ | \kappa'' - \pm_2  \kappa'| \lesa \beta }} \| C_d R_{\kappa''} \phi_\mu \|_{L^\frac{r}{r-2}_{t,x}} \| R_\kappa \mc{C}_{\ll d}^{\pm_1} \psi_{\lambda_1} \|_{L^r_{t,x}} \| R_{\kappa'} \mc{C}_{\ll d}^{\pm_2} \varphi_{\lambda_2} \|_{L^r_{t,x}} \\
                                          &\lesa \Big( \frac{d}{\mu} \Big)^{\frac{4}{r}-1 - 2\delta} \Big( \frac{\mu}{\lambda_1} \Big)^{-6\delta} \mu^{\frac{8}{r} - 2}  \| \phi_\mu \|_{L^4_{t,x}}^{\frac{8}{r}-2} \Big( \mu^\frac{1}{2} \| \phi_\mu \|_{V^2_{+,1}} \Big)^{3-\frac{8}{r}} \| \psi_{\lambda_1} \|_{V^2_{\pm_1, M}} \| \varphi_{\lambda_2}\|_{V^2_{\pm_2, M}}.
 \end{align*}
 Combining these bounds with $r = \frac{10}{3}$ then gives, for every
 $0<\delta, \theta<\frac{2}{5}$,
	$$ \Big| \int_{\RR^{1+3}} A_0 dx dt \Big| \lesa \Big(\frac{d}{\mu}\Big)^{ - 2 \delta} \Big( \frac{\mu}{\lambda_1}\Big)^{\frac{1}{2} - 6 (\theta + \delta)} \mu^{ \frac{2}{5} \theta}  \mb{A}^{\delta \theta} \Big( \mu^{\frac{1}{2}} \| \phi_\mu\|_{V^2_{+,1}} \| \psi_{\lambda_1 } \|_{V^2_{\pm_1, M}} \| \varphi_{\lambda_2} \|_{V^2_{\pm_2, M}}\Big)^{1-\delta \theta}.  $$
        Summing up over $\mu^{-1} \lesa d \lesa \mu$, and choosing
        $\delta, \theta>0$ sufficiently small, then gives the required
        estimate for the $A_0$ term.

        To estimate the $A_1$ term, we again put the high modulation
        term into $L^2_{t,x}$, which gives
	\begin{align}
          \Big| \int_{\RR^{1+3}} &A_1 dx dt \Big| \notag \\
          &\lesa \alpha  \sum_{\substack{\kappa, \kappa'\in \mc{C}_\alpha \\ |\pm_1 \kappa - \pm_2  \kappa'| \lesa \alpha }}  \sum_{\substack{\kappa''\in \mc{C}_\beta \\ | \kappa'' - \pm_2  \kappa'| \lesa \beta }} \| R_{\kappa''} C_{\lesa d} \phi_\mu \|_{L^4_{t,x}} \| R_{\kappa} \mc{C}_d^{\pm_1} \psi_{\lambda_1} \|_{L^2_{t,x}} \| R_{\kappa'} \mc{C}_{\lesa d}^{\pm_2}\varphi_{\lambda_2} \|_{L^4_{t,x}} \notag \\
                                                 &\lesa \Big( \frac{d}{\mu}\Big)^{-2\delta} \Big( \frac{\mu}{\lambda_1}\Big)^{\frac{1}{2} - 6\delta} \Big(  \| \phi_\mu \|_{L^4_{t,x}} \lambda_2^{-\frac{1}{2}} \| \varphi_{\lambda_2} \|_{L^4_{t,x}}\Big)^{\delta} \Big( \mu^\frac{1}{2} \| \phi_\mu \|_{V^2_{+, 1}}\| \varphi_{\lambda_2} \|_{V^2_{\pm_2, M}}\Big)^{1-\delta}\| \psi_{\lambda_1} \|_{V^2_{\pm_1, M}}.
                                                 \label{eqn:thm trilinear subcrit:case 1 A_1}
	\end{align}
        To gain an $L^4_{t,x}$ norm of $\psi_{\lambda_1}$, we essentially repeat the argument used in the $A_0$ case. More precisely, we observe that using the Klein-Gordon Strichartz estimate for $\phi_\mu$ and $\varphi_{\lambda_2}$ gives for every $\frac{10}{3}\les r < 4$ and $\delta>0$
            \begin{align*}
              \Big| \int_{\RR^{1+3}} &A_1 dx dt \Big|\\
                &\lesa \alpha \sum_{\substack{\kappa, \kappa'\in \mc{C}_\alpha \\ |\pm_1 \kappa - \pm_2  \kappa'| \lesa \alpha }} \| C_{\lesa d} \phi_\mu \|_{L^r_{t,x}} \| C_d R_\kappa \psi_{\lambda_1} \|_{L^2_{t,x}}^{3-\frac{8}{r}} \| C_d R_\kappa \psi_{\lambda_1} \|_{L^4_{t,x}}^{\frac{8}{r} - 2} \| C_{\lesa d} R_{\kappa'} \varphi_{\lambda_2} \|_{L^r_{t,x}} \\
                &\lesa \Big( \frac{d}{\mu} \Big)^{\frac{4}{r} -1 -4\delta} \Big( \frac{\mu}{\lambda_1} \Big)^{\frac{3}{2} -\frac{4}{r} -  4 \delta} \mu^{\frac{4}{r} - 1} \mu^\frac{1}{2} \| \phi_\mu \|_{V^2_{+, 1}}\Big( \lambda_1^{-\frac{1}{2}} \| \psi_{\lambda_1} \|_{L^4_{t,x}}\Big)^{\frac{8}{r} -2} \| \psi_{\lambda_1} \|_{V^2_{\pm_1, M}}^{ 3 - \frac{8}{r}}\| \varphi_{\lambda_2} \|_{V^2_{\pm_2, M}}
            \end{align*}
        where the square sums over caps were again controlled by using \cite[Lemma 8.6]{Candy2016}. Together with (\ref{eqn:thm trilinear subcrit:case 1 A_1}), this completes the proof for the $A_1$ component. An identical computation gives an acceptable bound for the $A_2$ term. Hence, by choosing $\delta$ sufficiently small and summing up over $\mu^{-1} \lesa d \lesa \mu$, we get the required bound in Case 1. \\

        \textbf{Case 2: $\mu \gg \min\{\lambda_1, \lambda_2\}$ and $d \lesa \min\{\lambda_1, \lambda_2\}$.} We only consider the case
        $\lambda_1 \g \lambda_2$, as the remaining case is
        identical. As previously, we first estimate $A_0$ by placing
        $\phi_\mu \in L^2$ and $\psi_{\lambda_1}, \varphi_{\lambda_2} \in L^4$.
From the resonance identities in \cite[Lemma 8.7]{Candy2016} together with \eqref{eq:m-b} we obtain
\begin{equation}\label{eq:res-id2}
 d \gtrsim \mathfrak{M}_{\pm_1,\pm_2}(\xi,\eta) \gtrsim \frac{\mu^2}{\lambda_2}\theta^2(\xi-\eta,\pm_1\xi)+\lambda_2\theta^2(\pm_1\xi,\pm_2\eta)+\lambda_2\theta^2(\xi-\eta,\pm_2\eta).
\end{equation}
Hence,  if we let $\beta = (\frac{d}{\lambda_2})^\frac{1}{2}$ we obtain
	\begin{align}
          \Big| &\int_{\RR^{1+3}} A_0 dx dt \Big|\lesa \beta  \sum_{\substack{q, q'' \in Q_{\lambda_2} \\ |q-q''| \approx \lambda_2}} \sum_{\substack{ \kappa, \kappa' \in \mc{C}_\beta \\ |\pm_1 \kappa - \pm_2\kappa'|\lesa \beta}} \| P_{q''} C_d \phi_\mu \|_{L^2_{t,x}} \| R_\kappa P_q \psi_{\lambda_1} \|_{L^4_{t,x}} \| R_{\kappa'}  \varphi_{\lambda_2}\|_{L^4_{t,x}} \notag \\
                                                 &\lesa \Big( \frac{d}{\lambda_2}\Big)^{-4\delta} \Big( \frac{\lambda_2}{\mu} \Big)^{\frac{1}{4} - 2 \delta}  \mu^\frac{1}{2} \| \phi_\mu \|_{V^2_{+,1}}  \Big( \lambda_1^{-\frac{1}{2}} \| \psi_{\lambda_1} \|_{L^4_{t,x}} \lambda_2^{-\frac{1}{2}} \| \varphi_{\lambda_2} \|_{L^4_{t,x}} \Big)^\delta \Big( \| \psi_{\lambda_1 } \|_{V^2_{\pm_1, M}} \| \varphi_{\lambda_2} \|_{V^2_{\pm_2, M}}\Big)^{1-\delta} \label{eqn:thm trilinear subcrit:case 2 A0}
	\end{align} for every
        $0<\delta<1$.
        Thus we have a high-low gain provided we place the functions
        $\phi_{\mu}$, $\psi_{\lambda_1}$, and $\varphi_{\lambda_2}$
        into the relevant $V^2$ space. On the other hand, letting
        $\alpha = (\frac{d \lambda_2}{\mu^2})^\frac{1}{2}$ and  applying
        the Klein-Gordon Strichartz estimate gives for
        $\frac{10}{3} \les r < 4$
        \begin{align*}
          \Big| \int_{\RR^{1+3}} A_0 dx dt \Big| &\lesa \beta\sum_{ \substack{ \kappa, \kappa' \in \mc{C}_\beta \\ |\pm_1 \kappa - \pm_2\kappa'| \lesa \beta}} \sum_{\substack{ \kappa'' \in \mc{C}_\alpha \\ |\pm_1 \kappa - \kappa''| \lesa \alpha }}  \| C_d R_{\kappa''} \phi_\mu \|_{L^{\frac{r}{r-2}}_{t,x}} \| R_{\kappa} \mc{C}_d^{\pm_1} \psi_{\lambda_1} \|_{L^r_{t,x}} \| R_{\kappa'} \mc{C}_d^{\pm_2} \varphi_{\lambda_2}\|_{L^r_{t,x}} \\
                                                 &\lesa \Big( \frac{d}{\lambda_2} \Big)^{\frac{4}{r}  -1 - 2\delta} \Big( \frac{\mu}{\lambda_2} \Big)^{1+\frac{4}{r}} \lambda_2^{\frac{8}{r} - 2} \big( \mu^{\frac{1}{2}} \| \phi_\mu \|_{V^2_{+, 1}} \big)^{3 - \frac{8}{r}} \| \phi_\mu \|_{L^4_{t,x}}^{\frac{8}{r} - 2} \| \psi_{\lambda_1} \|_{V^2_{\pm_1, M}} \| \varphi_{\lambda_2} \|_{V^2_{\pm_2, M}},
        \end{align*}
where we used that $\sup_{\kappa \in \mc{C}_\beta} \{ \kappa'' \in \mc{C}_\alpha : |\pm_1 \kappa - \kappa''| \lesa \alpha\}\lesa (\frac{\mu}{\lambda_2})^2$.
        Provided we choose $\delta>0$ sufficiently small, the above
        estimates and summation with respect to $\lambda^{-1}_2\lesa d\lesa \mu$ give an acceptable bound for the $A_0$ term.

The  argument to control the $A_1$ term is similar, we just reverse
        the roles of $\phi_\mu$ and $\psi_{\lambda_1}$, and note that,
        with $\alpha = (\frac{ d \lambda_2}{\mu^2} )^\frac{1}{2}$,
        \begin{align}
          \Big| \int_{\RR^{1+3}}& A_1 dx dt \Big| \notag \\
                                &\lesa\beta  \sum_{\substack{q, q'' \in Q_{\lambda_2} \\ |q-q''| \approx \lambda_2}} \sum_{\substack{\kappa, \kappa'' \in \mc{C}_\alpha \\ |\pm_1 \kappa - \kappa''| \lesa \alpha}} \sum_{\substack{ \kappa' \in \mc{C}_\beta \\ |\pm_1 \kappa - \pm_2\kappa'|\lesa \beta}}  \| P_{q''} R_{\kappa''} \phi_\mu \|_{L^4_{t,x}} \| R_\kappa P_q \mc{C}^{\pm_1}_d \psi_{\lambda_1} \|_{L^2_{t,x}} \| R_{\kappa'}  \varphi_{\lambda_2}\|_{L^4_{t,x}} \notag \\
                                &\lesa \Big( \frac{d}{\lambda_2}\Big)^{-4\delta} \Big( \frac{\lambda_2}{\mu} \Big)^{\frac{1}{4} - 4\delta}  \| \psi_{\lambda_1 } \|_{V^2_{\pm_1, M}} \Big( \|\phi_\mu \|_{L^4_{t,x}} \lambda_2^{-\frac{1}{2}} \| \varphi_{\lambda_2} \|_{L^4_{t,x}} \Big)^\delta \Big(\mu^\frac{1}{2} \| \phi_\mu \|_{V^2_{+,1}}\| \varphi_{\lambda_2} \|_{V^2_{\pm_2, M}}\Big)^{1-\delta}. \label{eqn:thm trilinear subcrit:case 2 A1}
	\end{align}
        As in the $A_0$ case, we can apply the Klein-Gordon Strichartz
        estimate to gain a positive factor of
        $\frac{d}{\lambda_2}$ as well as an $L^4_{t,x}$ factor of $\psi_{\lambda_1}$. Hence summing up over
        $\lambda_2^{-1} \lesa d \ll \lambda_2$ gives the claimed
        bound for the $A_1$ term.

        Finally to bound the $A_2$ component, we can either lose
        $\epsilon$ high derivatives, or avoid this loss by exploiting
        the $Y^{\pm, M}_{\lambda}$ type norms. More precisely, using
        the wave Strichartz pair $(\frac{2r}{r-1}, 2r)$ with $\frac{1}{r} = \frac{1}{a} - \frac{1}{2}(\frac{1}{a} - \frac{1}{2})$ and $a$ is as in the definition of the $Y^{\pm, M}_{\lambda}$ norm, we see that for any $\delta>0$ sufficiently small
	\begin{align}
          \Big| &\int_{\RR^{1+3}} A_2 dx dt \Big| \notag \\
		&\lesa  \beta\sum_{\substack{q, q''\in Q_{\lambda_2}\\ |q-q''| \approx \lambda_2}} \sum_{\substack{\kappa, \kappa'' \in \mc{C}_{\alpha} \\ |\pm_1\kappa - \kappa''| \lesa \alpha}} \sum_{\substack{\kappa' \in \mc{C}_{\beta} \\ |\pm_1 \kappa - \pm_2\kappa'| \lesa \beta}}\|  P_{q''} R_{\kappa''} C^+_{\lesa d} \phi_\mu \|_{L^{\frac{2r}{r-1}}_t L^{2r}_x} \| R_{\kappa} P_{q}\mc{C}_{\lesa d}^{\pm_1} \psi_{\lambda_1} \|_{L^\frac{2r}{r-1}_t L^{2r}_x} \| R_{\kappa'} \mc{C}_d^{\pm_2} \varphi_{\lambda_2} \|_{L^r_t L^{\frac{r}{r-1}}_x} \notag \\
		&\lesa \Big( \frac{d}{\lambda_2}\Big)^{ - b - 4\delta} \Big( \frac{\lambda_2}{\mu} \Big)^{\frac{1}{2a} - \frac{1}{4} - 4\delta}  \mb{A}^\delta  \Big( \mu^{\frac{1}{2}} \| \phi_\mu \|_{V^2_{+, 1}} \| \psi_{\lambda_1} \|_{V^2_{\pm_1, M}} \| \varphi \|_{Y^{\pm_2, M}_{\lambda_2}} \Big)^{1-\delta}.
                  \label{eqn:thm trilinear subcrit:case 2 A2}
	\end{align}
Here, we have used two estimates which require further explanation. First,
$L^r$ interpolation together with Bernstein's inequality implies

\[\| R_{\kappa'} \mc{C}_d^{\pm_2} \varphi_{\lambda_2} \|_{L^r_t L^{\frac{r}{r-1}}_x} \lesa  \Big( \frac{d}{\lambda_2} \Big)^{-\frac{1}{2} } \lambda_2^{\frac{2}{r}-\frac{3}{2}}\big(\lambda_2^{-\frac12}\| R_{\kappa'} \mc{C}_d^{\pm_2} \varphi_{\lambda_2} \|_{L^4_{t,x}}\big)^{\delta_0}\big(d^{-\frac{1}{a}} \| R_{\kappa'} \mc{C}_d^{\pm_2} \varphi_{\lambda_2} \|_{L^a_t L^{2}_x}\big)^{1-\delta_0}\]
with $\delta_0\in (0,1)$ satisfying $\frac{1}{r}=\frac{\delta_0}{4}+\frac{1-\delta_0}{a}$. Then, by writing $C_d^{\pm_2} \varphi_{\lambda_2}$ as a superposition of free waves, applying the Strichartz estimate and H\"older we obtain
\[
\lambda_2^{-\frac12}\| R_{\kappa'} \mc{C}_d^{\pm_2} \varphi_{\lambda_2} \|_{L^4_{t,x}}\lesa d^{\frac{1}{a}}\| R_{\kappa'} \mc{C}_d^{\pm_2} \varphi_{\lambda_2} \|_{L^a_t L^{2}_x}.
\]
We conclude that for all $0<\delta<\delta_0$
        $$\| R_{\kappa'} \mc{C}_d^{\pm_2}\varphi_{\lambda_2} \|_{L^r_t L^{\frac{r}{r-1}}_x} \lesa \Big( \frac{d}{\lambda_2} \Big)^{-\frac{1}{2} - b} \lambda_2^{\frac{1}{a} - 1} \Big( \lambda_2^{-\frac{1}{2}} \| \varphi_{\lambda_2} \|_{L^4_{t,x}} \Big)^\delta \| \varphi \|_{Y^{\pm_2, M}_{\lambda_2}}^{1-\delta},$$
since $R_{\kappa'}\mc{C}_d^{\pm_2}$ is uniformly disposable here.
The second estimate used above is the following: Since $\frac{5}{3}<a<2$ we have via
        \cite[Lemma 8.6]{Candy2016} for every $0<\delta <\frac{1}{2}$
	\begin{align*}
          \Big( \sum_{q \in Q_{\lambda_2}} \sum_{\kappa \in \mc{C}_{\alpha}} \| R_{\kappa} P_{q} \mc{C}_{\lesa d}^{\pm_1} \psi_{\lambda_1} \|_{L^{\frac{2r}{r-1}}_t L^{2r}_x}^2 \Big)^{\frac{1}{2}}&\lesa (\mu \lambda_2)^{\frac{1}{2} - \frac{1}{2r}} \Big( \sum_{q \in Q_{\lambda_2}} \sum_{\kappa \in \mc{C}_{\beta}} \big( \lambda_1^{-\frac{1}{2}} \| \psi_{\lambda_1} \|_{L^4_{t,x}}\big)^{2\delta} \| \psi_{\lambda_1} \|_{V^2_{\pm_1, M}}^{2-2\delta} \Big)^{\frac{1}{2}} \\
          &\lesa \Big( \frac{\lambda_2}{\mu} \Big)^{-2 \delta} \Big( \frac{d}{\lambda_2} \Big)^{-2\delta} ( \mu \lambda_2)^{\frac{3}{8}-\frac{1}{4a}}
          \Big( \lambda_1^{-\frac{1}{2}} \| \psi_{\lambda_1} \|_{L^4_{t,x}} \Big)^\delta \| \psi_{\lambda_1} \|_{V^2_{\pm_1, M}}^{1-\delta}
        \end{align*}
        together with a similar bound for the $\phi_{\mu}$ term. Thus
        summing up over $\lambda_2^{-1} \lesa d \lesa \lambda_2$ and
        choosing $\delta>0$ sufficiently small (depending on both $a$
        and $\varrho$), we get an acceptable bound for the $A_2$ term.

        We also require a bound for the $A_2$ component without using the $Y^{\pm, M}_{\lambda}$ norm. To this end, we note that for every
        $\delta>0$ we have the weaker bound
	\begin{align}
          \Big| \int_{\RR^{1+3}} & A_2 dx dt \Big| \notag\\
                                 &\lesa \beta\sum_{\substack{q, q''\in \mc{C}_{\lambda_2}\\ |q-q''| \approx \lambda_2}} \sum_{\substack{\kappa, \kappa'' \in \mc{C}_{\alpha} \\ |\pm_1\kappa - \kappa''| \lesa \alpha}} \sum_{\substack{\kappa' \in \mc{C}_{\beta} \\ |\pm_1 \kappa - \pm_2\kappa'| \lesa \beta}} \|   R_{\kappa''} P_{q''}C^+_{\lesa d} \phi_\mu \|_{L^4_{t,x}} \| R_{\kappa} P_{q}\mc{C}_{\lesa d}^{\pm_1} \psi_{\lambda_1} \|_{L^4_{t,x}} \| R_{\kappa'} \mc{C}_d^{\pm_2} \varphi_{\lambda_2} \|_{L^2_{t,x}} \notag\\
                                 &\lesa \Big( \frac{d}{\lambda_2} \Big)^{-4\delta} \Big( \frac{\lambda_2}{\mu}\Big)^{-4\delta}\Big( \| \phi_\mu \|_{L^4_{t,x}} \lambda_1^{-\frac{1}{2}} \|\psi_{\lambda_1} \|_{L^4_{t,x}} \Big)^\delta \Big( \mu^\frac{1}{2} \| \phi_\mu \|_{V^2_{+, 1}} \| \psi_{\lambda_1} \|_{V^2_{\pm_1, M}} \Big)^{1-\delta} \| \varphi_{\lambda_2} \|_{V^2_{\pm_2, M}}. \label{eqn:thm trilinear subcrit:case 2 A2 without Y}
	\end{align}
        To gain a power of the $L^4_{t,x}$ norm of $\varphi_{\lambda_2}$, we exploit the Klein-Gordon Strichartz estimates as previously which gives for $\frac{10}{3}\les r < 4$ and $\delta>0$
            \begin{align*}
              \Big| \int_{\RR^{1+3}} &A_2 dx dt \Big|\\
                &\lesa \beta \sum_{\substack{\kappa, \kappa'\in \mc{C}_\alpha \\ |\pm_1 \kappa - \pm_2  \kappa'| \lesa \alpha }} \sum_{\substack{\kappa' \in \mc{C}_{\beta} \\ |\pm_1 \kappa - \pm_2\kappa'| \lesa \beta}} \| C_{\lesa d} R_{\kappa''} \phi_\mu \|_{L^r_{t,x}} \| C_{\lesa d} R_{\kappa} \psi_{\lambda_1} \|_{L^r_{t,x}} \| C_d R_{\kappa'} \varphi_{\lambda_2} \|_{L^2_{t,x}}^{3-\frac{8}{r}} \| C_d R_{\kappa'} \varphi_{\lambda_2} \|_{L^4_{t,x}}^{\frac{8}{r} - 2}  \\
                &\lesa \Big( \frac{d}{\lambda_2} \Big)^{\frac{4}{r} -1 +\delta} \Big( \frac{\lambda_2}{\mu} \Big)^{-\frac{1}{2} - 2 \delta} \lambda_2^{\frac{8}{r} - 2} \mu^\frac{1}{2} \| \phi_\mu \|_{V^2_{+, 1}}\| \psi_{\lambda_1} \|_{V^2_{\pm_1, M}} \Big( \lambda_1^{-\frac{1}{2}} \| \varphi_{\lambda_2} \|_{L^4_{t,x}}\Big)^{\frac{8}{r} -2} \| \varphi_{\lambda_2} \|_{V^2_{\pm_2, M}}^{ 3 - \frac{8}{r}}.  \end{align*}
        Combining these bounds and summing up with respect to $\lambda_2^{-1}\lesa d\lesa \lambda_2$, we obtain an acceptable contribution for the $A_2$ term. \\

        \textbf{Case 3: $\mu \ll \lambda_1 \approx \lambda_2$ and
          $d \gg \mu$.} To bound the $A_0$ component, we first observe that $\int_{\RR^{1+3}} A_0 \not = 0$ implies $d \approx \mathfrak{M}_{\pm_1, \pm_2}$. In particular, since either $\mathfrak{M}_{\pm_1, \pm_2} \lesa \mu$ or $\mathfrak{M}_{\pm_1, \pm_2} \approx \lambda_1$,  the sum over the modulation is restricted to $d \gtrsim \lambda_1$. If now apply Theorem \ref{thm:cheap bilinear} with $\gamma =\frac{1}{8}$ we see that
        \begin{align*}
          \Big| \int_{\RR^{1+3}} A_0 dx dt \Big|&\lesa \| C^+_d \phi_\mu \|_{L^2_{t,x}} \big\| P_{\mu} \big[ \big(  \mc{C}_{\ll d}^{\pm_1} \psi_{\lambda_1} \big)^\dagger \gamma^0   \mc{C}_{\ll d}^{\pm_2} \varphi_{\lambda_2} \big] \big\|_{L^2_{t,x}}\\
                                                &\lesa \Big( \frac{d}{\lambda_1} \Big)^{-\frac{1}{2}} \Big( \frac{\mu}{\lambda_1} \Big)^{\frac{1}{4}} \mu^\frac{1}{2} \|\phi_\mu\|_{V^2_{+,1}} \Big( \lambda_1^{-\frac{1}{2}} \| \psi_{\lambda_1} \|_{L^4_{t,x}} \lambda_1^{-\frac{1}{2}} \| \varphi_{\lambda_2} \|_{L^4_{t,x}} \Big)^{\frac{1}{8}} \Big( \| \psi_{\lambda_1} \|_{V^2_{\pm_1, M}} \| \varphi_{\lambda_2} \|_{V^2_{\pm_2, M}}\Big)^{\frac{7}{8}}.
        \end{align*}
        On the other hand, using the Klein-Gordon Strichartz
        estimates, and noting that the $C_{\ll d}$ multipliers are now
        disposable, we have for $\frac{10}{3}\les r < 4$ the estimate
        \begin{align}
          \Big| \int_{\RR^{1+3}} A_0 dx dt \Big| &\les \| C_d \phi_\mu \|_{L^\frac{r}{r-2}_{t,x}} \| \psi_{\lambda_1} \|_{L^r_{t,x}} \|  \varphi_{\lambda_2} \|_{L^r_{t,x}} \\
                                                 &\lesa \Big( \frac{d}{\mu} \Big)^{\frac{4}{r}-1} \mu^{\frac{8}{r} - 2}  \| \phi_\mu \|_{L^4_{t,x}}^{\frac{8}{r}-2} \Big( \mu^\frac{1}{2} \| \phi_\mu \|_{V^2_{+,1}} \Big)^{3-\frac{8}{r}} \| \psi_{\lambda_1} \|_{V^2_{\pm_1, M}} \| \varphi_{\lambda_2}\|_{V^2_{\pm_2, M}}.\label{eqn:proof of thm tri sub:c3 kg bound A0}
	\end{align}
        In particular, interpolating between these bounds and summing
        up over $d \gtrsim \lambda_1$ gives an acceptable contribution
        for the $A_0$ term.

        The bound for the $A_1$ term is slightly different as we no longer have $\mathfrak{M}_{\pm_1, \pm_2} \approx d$, and thus have to consider the full region $d \gg \mu$. We first deal with the region $d \gtrsim \lambda_1$. Here we argue as usual by controlling the integral by $L^4 \times L^2 \times L^4$, which gives for every $0< \delta\les 1$
	\begin{align}
          \Big| \int_{\RR^{1+3}} A_1dx dt \Big| &\lesa \| \phi_\mu \|_{L^4_{t,x}} \sum_{\substack{q, q' \in Q_\mu \\ |q - q'| \lesa \mu}} \| P_q C_d \psi_{\lambda_1} \|_{L^2_{t,x}} \| P_{q'} \varphi_{\lambda_2} \|_{L^4_{t,x}} \notag \\
                                           &\lesa \Big( \frac{d}{\lambda_1} \Big)^{-\frac{1}{2}} \Big( \frac{\mu}{\lambda_1} \Big)^{\frac{1}{4} - 2\delta} \| \phi_\mu \|_{L^4_{t,x}} \| \psi_{\lambda_2} \|_{V^2_{\pm_1, M}} \Big( \lambda_1^{-\frac{1}{2}} \| \varphi_{\lambda_2} \|_{L^4_{t,x}} \Big)^\delta \| \varphi_{\lambda_2} \|_{V^2_{\pm_2, M}}^\delta \label{eqn:proof of thm tri sub:case 3 pm}
	\end{align}
        where we controlled the square sum as previously via an estimate analogous to
        (\ref{eqn:thm trilinear subcrit:example of square sum control}). To gain an $L^4_{t,x}$ norm of $\psi$, we again exploit the Klein-Gordon Strichartz estimate and observe that for $\frac{10}{3} \les r <4$ we have
    \begin{align}
              \Big| \int_{\RR^{1+3}} A_1 dx dt \Big|
                &\lesa\| \phi_\mu \|_{L^r_{t,x}} \| C_d \psi_{\lambda_1} \|_{L^2_{t,x}}^{3-\frac{8}{r}} \| C_d R_\kappa \psi_{\lambda_1} \|_{L^4_{t,x}}^{\frac{8}{r} - 2} \| \varphi_{\lambda_2} \|_{L^r_{t,x}} \notag \\
                &\lesa \Big( \frac{d}{\lambda_1} \Big)^{\frac{4}{r} -\frac{3}{2}}  \lambda_1^{\frac{8}{r}-2} \mu^\frac{1}{2} \| \phi_\mu \|_{V^2_{+, 1}}\| \varphi_{\lambda_2} \|_{V^2_{\pm_2, M}} \Big( \lambda_1^{-\frac{1}{2}} \| \psi_{\lambda_1} \|_{L^4_{t,x}}\Big)^{\frac{8}{r} -2} \| \psi_{\lambda_1} \|_{V^2_{\pm_1, M}}^{ 3 - \frac{8}{r}}.
                \label{eqn:proof of thm tri sub:c3 kg A1}
            \end{align}
    Combining the bounds (\ref{eqn:proof of thm tri sub:case 3 pm}) and \eqref{eqn:proof of thm tri sub:c3 kg A1}, and summing up over modulation, we deduce the required bound for the $A_1$ in the region $d \gtrsim \lambda_1$.

    We now consider the case $\mu \ll d \ll \lambda_1$. This implies that $\mathfrak{M}_{\pm_1, \pm_2} \ll \lambda_1$, which is only possible if $\pm_1 = \pm_2$. The key point is that we may now exploit the null structure in the product of the spinors $\psi$ and $\varphi$, since
    we gain $\theta(\xi, \eta)$, and the angle between the supports of $\widehat{\psi}$ and $\widehat{\phi}$ is less than $\frac{\mu}{\lambda_1}$. In particular, exploiting the standard null structure bound implies that we may improve   \eqref{eqn:proof of thm tri sub:case 3 pm} to
            \begin{align}
          \Big| \int_{\RR^{1+3}} A_1dx dt \Big|
                                                     &\lesa \Big( \frac{d}{\lambda_1} \Big)^{-\frac{1}{2}} \Big( \frac{\mu}{\lambda_1} \Big)^{\frac{5}{4} - 2\delta} \| \phi_\mu \|_{L^4_{t,x}} \| \psi_{\lambda_2} \|_{V^2_{\pm_1, M}} \Big( \lambda_1^{-\frac{1}{2}} \| \varphi_{\lambda_2} \|_{L^4_{t,x}} \Big)^\delta \| \varphi_{\lambda_2} \|_{V^2_{\pm_2, M}}^\delta
                                                     \label{eqn:proof of thm tri sub:c3 A1 med mod}
	        \end{align}
    Again combining this bound with \eqref{eqn:proof of thm tri sub:c3 kg A1}, and summing up over $\mu \ll d \ll \lambda_1$, the required bound for the $A_1$ term follows. The $A_2$ term follows from an identical argument. \\

        \textbf{Case 4: $\mu \gtrsim \min\{\lambda_1, \lambda_2\}$ and $d \gtrsim \min\{\lambda_1, \lambda_2\}$.} It is enough to consider the case        $\lambda_1 \g \lambda_2$. To estimate the $A_0$ term, we first observe that as in the previous case, we may restrict the sum over modulation to $d \gtrsim  \mu$. If we now  observe that
	\begin{align}
          \Big| \int_{\RR^{1+3}} A_0 dx dt \Big| &\lesa \| C_d^+ \phi_\mu \|_{L^2_{t,x}} \|\psi_{\lambda_1}\|_{L^4_{t,x}} \| \varphi_{\lambda_2} \|_{L^4_{t,x}} \notag \\
                                                 &\lesa \Big( \frac{d}{\mu} \Big)^{-\frac{1}{2}} \Big( \frac{\lambda_2}{\mu} \Big)^{\frac{1}{2}} \mu^{\frac{1}{2}} \| \phi_\mu \|_{V^2_{+,1}} \lambda_1^{-\frac{1}{2}} \| \psi_{\lambda_1} \|_{L^4_{t,x}} \lambda_2^{-\frac{1}{2}} \| \varphi_{\lambda_2} \|_{L^4_{t,x}}
                                                 \label{eqn:proof of thm tri sub:c4 A0}
	\end{align}
    then, together with \eqref{eqn:proof of thm tri sub:c3 kg bound A0}, summing up over $d\gtrsim \mu$ gives the required bound for the $A_0$ term.

    If we suppose that $d \gtrsim \mu$, then a similar argument handles  the $A_1$ term. Again supposing that $d \gtrsim \mu$, to bound the $A_2$ term, we decompose into cubes of diameter $\mu$ to obtain
	\begin{align}
          \Big| \int_{\RR^{1+3}}& A_2 dx dt \Big| \notag \\&\lesa \sum_{\substack{ q, q' \in Q_{\lambda_2} \\ |q-q''| \approx \mu}} \| P_{q''} \phi_\mu \|_{L^4_{t,x}} \| P_q \psi_{\lambda_1} \|_{L^4_{t,x}} \| \mc{C}_d^{\pm_2} \varphi_{\lambda_2} \|_{L^2_{t,x}} \notag \\
                                                 &\lesa \Big( \frac{d}{\mu} \Big)^{-\frac{1}{2}} \Big( \frac{\lambda_2} {\mu}\Big)^{\frac{1}{2} - 4 \delta } \Big( \| \phi_\mu \|_{L^4_{t,x}} \lambda_1^{-\frac{1}{2}} \|\psi_{\lambda_1} \|_{L^4_{t,x}} \Big)^\delta \Big( \mu^{\frac{1}{2}} \|\phi_\mu\|_{V^2_{+,1}} \|\psi_{\lambda_1} \|_{V^2_{\pm_1, M}} \Big)^{1-\delta} \| \varphi_{\lambda_2} \|_{V^2_{\pm_2, M}}.
                                                 \label{eqn:proof of thm tri sub:c4 A2}
	\end{align}
    An analogous estimate to \eqref{eqn:proof of thm tri sub:c3 kg A1} then gives an additional $L^4_{t,x}$ norm of $\varphi_{\lambda_2}$. This gives an acceptable bound when $d \gtrsim \mu$.

    It remains to bound the $A_1$ and $A_2$ terms when $\lambda_2 \ll d \ll \mu$. We first recall that either $\mathfrak{M}_{\pm_1, \pm_2}\approx \lambda_1$ or $\mathfrak{M}_{\pm_1, \pm_2} \lesa \lambda_2$. Hence the restriction $\lambda_2 \ll d \ll \mu$ implies that
    $\mathfrak{M}_{\pm_1, \pm_2} \ll d$ and consequently, a short computation shows that at least \emph{two} of the functions $\phi$, $\psi$, and $\varphi$ must have large modulation. More precisely, we have the decomposition
         \begin{equation}\label{eqn:proof of thm tri sub:c4 A1 mod decomp}
            \int_{\RR^{1+3}} A_1 dx dt
            = \int_{\RR^{1+3}} C_{\approx d} \phi_{\mu} \big( \mc{C}_{ d}^{\pm_1} \psi_{\lambda_1}\big)^\dagger \gamma^0 \mc{C}_{\lesa d}^{\pm_2} \varphi_{\lambda_2} dx dt + \int_{\RR^{1+3}} C_{\ll d} \phi_{\mu} \big( \mc{C}_{ d}^{\pm_1} \psi_{\lambda_1}\big)^\dagger \gamma^0 \mc{C}_{\approx d}^{\pm_2} \varphi_{\lambda_2} dx dt.\end{equation}
    To bound the first term in \eqref{eqn:proof of thm tri sub:c4 A1 mod decomp}, we observe that
        \begin{align*}
          \Big| \int_{\RR^{1+3}} C_{\approx d} \phi_{\mu} \big( \mc{C}_{ d}^{\pm_1} \psi_{\lambda_1}\big)^\dagger \gamma^0 \mc{C}_{\lesa d}^{\pm_2} \varphi_{\lambda_2} dx dt\Big|
                &\lesa \| C_{\approx d} \phi_{\mu}\|_{L^2_{t,x}} \| \mc{C}_{ d}^{\pm_1} \psi_{\lambda_1} \|_{L^2_{t,x}} \| \mc{C}_{\lesa d}^{\pm_2} \varphi_{\lambda_2}\|_{L^\infty_{t,x}} \\
                &\lesa \Big( \frac{d}{\lambda_2}\Big)^{-1}  \Big( \frac{\lambda_2}{\mu} \Big)^\frac{1}{2} \mu^\frac{1}{2} \| \phi_\mu \|_{V^2_{+, 1}} \| \psi_{\lambda_1} \|_{V^2_{\pm_1, M}} \| \varphi_{\lambda_2} \|_{V^2_{\pm_2, M}}.
        \end{align*}
    On the other hand, to bound the second term in \eqref{eqn:proof of thm tri sub:c4 A1 mod decomp}, we decompose into cubes of size $\lambda_2$ and apply Bernstein's inequality which gives for every $\epsilon>0$
        \begin{align*}
          \Big| \int_{\RR^{1+3}} C_{\ll d} \phi_{\mu} \big( \mc{C}_{ d}^{\pm_1} \psi_{\lambda_1}\big)^\dagger \gamma^0 \mc{C}_{\lesa d}^{\pm_2} \varphi_{\lambda_2} dx dt\Big|
                &\lesa \sum_{\substack{q, q' \in \mc{Q}_{\lambda_2} \\ |q-q'| \lesa \lambda_2}} \| C_{\ll d} P_{q'} \phi_{\mu}\|_{L^\infty_{t,x}} \| P_q \mc{C}_{ d}^{\pm_1} \psi_{\lambda_1} \|_{L^2_{t,x}} \| \mc{C}_{\approx d}^{\pm_2} \varphi_{\lambda_2}\|_{L^2_{t,x}} \\
                &\lesa \Big( \frac{d}{\lambda_2}\Big)^{-1}  \Big( \frac{\lambda_2}{\mu} \Big)^{\frac{1}{2}-\epsilon} \mu^\frac{1}{2} \| \phi_\mu \|_{V^2_{+, 1}} \| \psi_{\lambda_1} \|_{V^2_{\pm_1, M}} \| \varphi_{\lambda_2} \|_{V^2_{\pm_2, M}}.
        \end{align*}
    Thus we deduce that for $\lambda_2 \ll d \ll \mu$ and $\mu \approx \lambda_1 \gtrsim \lambda_2$ we have
        \begin{equation}\label{eqn:proof of thm tri sub:c4 A1 restricted d}
            \Big| \int_{\RR^{1+3}} A_1 dx dt \Big| \lesa \Big( \frac{d}{\lambda_2}\Big)^{-1}  \Big( \frac{\lambda_2}{\mu} \Big)^{\frac{1}{4}} \mu^\frac{1}{2} \| \phi_\mu \|_{V^2_{+, 1}} \| \psi_{\lambda_1} \|_{V^2_{\pm_1, M}} \| \varphi_{\lambda_2} \|_{V^2_{\pm_2, M}}.
        \end{equation}
     To gain powers of the $L^4_{t,x}$ norm, we simply use an analogous bound to \eqref{eqn:proof of thm tri sub:c4 A0} and \eqref{eqn:proof of thm tri sub:c3 kg bound A0}. Thus summing up over $\lambda_2 \ll d \ll \mu$ then gives the required bound for the $A_1$ term.

     We now consider the $A_2$ term in the region $\lambda_2 \ll d \ll \mu$. As in the argument for the $A_1$ term, we have the decomposition
        \begin{equation}\label{eqn:proof of thm tri sub:c4 A2 mod decomp}
            \int_{\RR^{1+3}} A_2 dx dt
            = \int_{\RR^{1+3}} C_{\approx d} \phi_{\mu} \big( \mc{C}_{ \lesa d}^{\pm_1} \psi_{\lambda_1}\big)^\dagger \gamma^0 \mc{C}_{d}^{\pm_2} \varphi_{\lambda_2} dx dt + \int_{\RR^{1+3}} C_{\ll d} \phi_{\mu} \big( \mc{C}_{\approx  d}^{\pm_1} \psi_{\lambda_1}\big)^\dagger \gamma^0 \mc{C}_{ d}^{\pm_2} \varphi_{\lambda_2} dx dt.\end{equation}
     The first term in \eqref{eqn:proof of thm tri sub:c4 A2 mod decomp} can be handled in an analogous manner to the second term in \eqref{eqn:proof of thm tri sub:c4 A1 mod decomp}. Namely, decomposing into cubes and applying Bernstein's inequality gives for every $\epsilon>0$
        \begin{align*}
          \Big| \int_{\RR^{1+3}} C_{\approx d} \phi_{\mu} \big( \mc{C}_{ \lesa d}^{\pm_1} \psi_{\lambda_1}\big)^\dagger \gamma^0 \mc{C}_{ d}^{\pm_2} \varphi_{\lambda_2} dx dt\Big|
                &\lesa \sum_{\substack{q, q' \in \mc{Q}_{\lambda_2} \\ |q-q'| \lesa \lambda_2}} \| C_{\approx  d} P_{q'} \phi_{\mu}\|_{L^2_{t,x}} \| P_q \mc{C}_{ d}^{\pm_1} \psi_{\lambda_1} \|_{L^\infty_{t,x}} \| \mc{C}_{\approx d}^{\pm_2} \varphi_{\lambda_2}\|_{L^2_{t,x}} \\
                &\lesa \Big( \frac{d}{\lambda_2}\Big)^{-1}  \Big( \frac{\lambda_2}{\mu} \Big)^{\frac{1}{2}-\epsilon} \mu^\frac{1}{2} \| \phi_\mu \|_{V^2_{+, 1}} \| \psi_{\lambda_1} \|_{V^2_{\pm_1, M}} \| \varphi_{\lambda_2} \|_{V^2_{\pm_2, M}}.
        \end{align*}
     Applying an identical argument to the second term in \eqref{eqn:proof of thm tri sub:c4 A2 mod decomp}, we deduce that
        \begin{equation}\label{eqn:proof of thm tri sub:c4 A2 restricted d}
            \Big| \int_{\RR^{1+3}} A_2 dx dt \Big| \lesa \Big( \frac{d}{\lambda_2}\Big)^{-1}  \Big( \frac{\lambda_2}{\mu} \Big)^{\frac{1}{4}} \mu^\frac{1}{2} \| \phi_\mu \|_{V^2_{+, 1}} \| \psi_{\lambda_1} \|_{V^2_{\pm_1, M}} \| \varphi_{\lambda_2} \|_{V^2_{\pm_2, M}}.
        \end{equation}
     Together with the standard bound \eqref{eqn:proof of thm tri sub:c4 A2}, and the $A_2$ version of \eqref{eqn:proof of thm tri sub:c3 kg A1}, after summing up over $\lambda_2 \ll d \ll \mu$ we deduce the final bound required for the $A_2$ component.
\end{proof}

\subsection{Proof of Theorem \ref{thm:duhamel-sub}}\label{subsec:proof-duhamel-sub}
The first step is to obtain  frequency localised versions of the
required bounds. Namely, if $\varrho>0$ is sufficiently small and we take $\frac{1}{a}=\frac{1}{2} + \frac{\varrho}{32}$ and $b=4(\frac{1}{a}-\frac{1}{2})$, our aim is to show
there exists $0<\theta_1<\frac{1}{4}$ such that for all $0\les \theta \les \theta_1$ we have for the Dirac Duhamel term, the bounds
\begin{equation}\label{eqn:proof of thm duhamel subcrit:psi bd V2}\begin{split}
    \big\|  \Pi_{\pm_1} P_{\lambda_1} &\mc{I}^{\pm_1, M}\big( \phi_{\mu} \gamma^0 \Pi_{\pm_2} \varphi_{\lambda_2} \big) \big\|_{V^2_{\pm_1, M}} \\
        &\lesa  (\min\{\mu, \lambda_2\})^{\varrho} \Big( \frac{\min\{\mu, \lambda_2\}}{\max\{\mu, \lambda_2\}}\Big)^{\frac{\varrho}{100}} \Big( \|
        \phi_\mu\|_{L^4_{t,x}} \lambda_2^{-\frac{1}{2}}\| \varphi_{\lambda_2} \|_{L^4_{t,x}} \Big)^{\theta} \Big(  \mu^{\frac{1}{2}} \|
        \phi_\mu \|_{V^2_{+,1}} \| \varphi \|_{F^{\pm_2,M}_{\lambda_2}} \Big)^{1-\theta}
  \end{split}
\end{equation}
and
    \begin{equation}\label{eqn:proof of thm duhamel subcrit:psi bd Y}
        \begin{split}
          \big\|  \Pi_{\pm_1}  &\mc{I}^{\pm_1, M}\big( \phi_{\mu} \gamma^0 \Pi_{\pm_2} \varphi_{\lambda_2} \big) \big\|_{Y^{\pm_1, M}_{\lambda_1}} \\
    &\lesa (\min\{\mu,\lambda_2\})^{\varrho} \Big( \frac{\min\{
      \mu, \lambda_2\}}{\max\{\mu, \lambda_2\}}\Big)^{\frac{\varrho}{100}} \Big( \|
    \phi_\mu\|_{L^4_{t,x}} \lambda_2^{-\frac{1}{2}}\| \varphi_{\lambda_2} \|_{L^4_{t,x}} \Big)^{\theta} \Big(  \mu^{\frac{1}{2}} \|
    \phi_\mu \|_{V^2_{+,1}} \| \varphi_{\lambda_2} \|_{V^2_{\pm_2,
        M}} \Big)^{1-\theta},
        \end{split}
    \end{equation}
while for the wave Duhamel term, we have
\begin{equation}\label{eqn:proof of thm duhamel subcrit:phi bd V2}\begin{split}
    &\mu^{-\frac{1}{2}} \big\|  P_{\mu} \mc{I}^{+,1}\big( \overline{\Pi_{\pm_1} \psi_{\lambda_1}} \Pi_{\pm_2}\varphi_{\lambda_2}\big) \big\|_{V^2_{+,1}} \\
    &\lesa  (\min\{\mu, \lambda_1, \lambda_2\})^{\varrho} \Big(
    \frac{\min\{ \mu, \lambda_1, \lambda_2\}}{\max\{\mu, \lambda_1,
      \lambda_2\}}\Big)^{\frac{\varrho}{100}} \Big( \lambda_1^{-\frac{1}{2}} \|
    \psi_{\lambda_1} \|_{L^4_{t,x}} \lambda_2^{-\frac{1}{2}} \| \varphi_{\lambda_2} \|_{L^4_{t,x}} \Big)^{\theta} \Big(  \| \psi
    \|_{F^{\pm_1, M}_{\lambda_1}} \| \varphi \|_{F^{\pm_2,
        M}_{\lambda_2}} \Big)^{1-\theta}.
  \end{split}
\end{equation}
Assuming the bounds \eqref{eqn:proof of thm duhamel subcrit:psi bd V2}, \eqref{eqn:proof of thm duhamel subcrit:psi bd Y}, and \eqref{eqn:proof of thm duhamel subcrit:psi bd Y} for the moment, the estimates in Theorem \ref{thm:duhamel-sub} are a consequence of a straightforward summation argument. More precisely, fix $s_0>0$ sufficiently small. We have
        \begin{align*}
         & \big\|  \Pi_{\pm_1} \mc{I}^{\pm_1, M}\big( \phi \gamma^0 \Pi_{\pm_2} \varphi_{\lambda_2} \big) \big\|_{\mb{V}_{\pm_1, M}^{s_0}}\\
                           \lesa{} & \sum_{\lambda_1} \lambda_1^{s_0} \bigg(\sum_{\substack{\mu, \lambda_2 \\  \mu \approx \lambda_2 \gg \lambda_1}} \big\|  P_{\lambda_1}\Pi_{\pm_1} \mc{I}^{\pm_1, M}\big( \phi_\mu \gamma^0 \Pi_{\pm_2} \varphi_{\lambda_2} \big) \big\|_{V^2_{\pm_1, M}} \\
&\qquad + \sum_{\substack{\mu, \lambda_2 \\  \lambda_1 \approx \max\{\mu, \lambda_2\}}}\big\|  P_{\lambda_1}\Pi_{\pm_1} \mc{I}^{\pm_1, M}\big( \phi_\mu \gamma^0 \Pi_{\pm_2} \varphi_{\lambda_2} \big) \big\|_{{V^2_{\pm_1, M}}}\bigg)
        \end{align*}
 An application of \eqref{eqn:proof of thm duhamel subcrit:psi bd V2} with $\varrho = \frac{100}{101} s_0$ gives $0<\theta_0<\frac{1}{2}$ such that for all $0<\theta<\min\{\theta_0, \frac{1}{202}\}$ we have
\begin{align*}
&\big\|  \Pi_{\pm_1} \mc{I}^{\pm_1, M}\big( \phi \gamma^0 \Pi_{\pm_2} \varphi_{\lambda_2} \big) \big\|_{\mb{V}_{\pm_1, M}^{s_0}}\\
&\lesa\bigg( \sup_{\mu, \lambda_2} \Big(\|\phi_\mu \|_{L^4_{t,x}} \lambda_2^{-\frac{1}{2}} \| \varphi_{\lambda_2}\|_{L^4_{t,x}}\Big)^\theta \Big( \mu^{\frac{1}{2}+s_0} \| \phi_\mu\|_{V^2_{+,1}} \lambda_2^{s_0} \| \varphi \|_{F^{\pm_2, M}_{\lambda_2}} \Big)^{(1-\theta)}\bigg) \\
                             & \qquad \cdot
                                                          \bigg( \sum_{\lambda_1} \sum_{\mu \gg \lambda_1} \lambda_1^{-(\frac{1}{101} - 2\theta)s_0} \Big(\frac{\mu}{\lambda_1}\Big)^{-(\frac{102}{101}-2\theta)s_0} +\sum_{\lambda_1} \sum_{\mu \lesa \lambda_1} \lambda_1^{-(\frac{1}{101} - 2\theta)s_0} \Big( \frac{\mu}{\lambda_1} \Big)^{\theta s_0} \bigg)\\
                             &\lesa \sup_{\mu, \lambda_2} \Big(\|\phi_\mu \|_{L^4_{t,x}} \lambda_2^{-\frac{1}{2}} \| \varphi_{\lambda_2}\|_{L^4_{t,x}}\Big)^\theta \Big( \mu^{\frac{1}{2}+s_0} \| \phi_\mu\|_{V^2_{+,1}} \lambda_2^{s_0} \| \varphi \|_{F^{\pm_2, M}_{\lambda_2}} \Big)^{(1-\theta)}.
        \end{align*}
Thus we obtain \eqref{eqn:thm duhamel sub:V psi} in the case $s=s_0$. The general case $s>s_0$ follows by using the fact that $\lambda_1^s \lesa \lambda_1^{s_0} (\max\{ \mu, \lambda_2 \})^{s-s_0}$. An identical argument using (\ref{eqn:proof of thm duhamel subcrit:psi bd Y}) gives the $\mb{Y}^s_{\pm_1, M}$ bound (\ref{eqn:thm duhamel sub:Y psi}). Similarly the bound \eqref{eqn:thm duhamel sub:V phi} follows from (\ref{eqn:proof of thm duhamel subcrit:phi bd V2}).

We now turn to the proof of the estimates \eqref{eqn:proof of thm duhamel subcrit:psi bd V2}, \eqref{eqn:proof of thm duhamel subcrit:psi bd Y}, and \eqref{eqn:proof of thm duhamel subcrit:phi bd V2}. It is enough to consider the case $\theta=\theta_1$, as the $L^4_{t,x}$ terms are dominated by the corresponding $V^2$ norms.  The bounds (\ref{eqn:proof of thm duhamel subcrit:psi bd V2}) and (\ref{eqn:proof of thm duhamel subcrit:phi bd V2}) follow directly from Theorem \ref{thm:trilinear freq loc subcrit} together with \eqref{eqn:energy ineq for V2}. On the other hand, the argument used to obtain (\ref{eqn:proof of thm duhamel subcrit:psi bd Y}) is slightly more involved. We start by considering the case $\mu \lesa \lambda_2$. An application of the Klein-Gordon Strichartz estimate gives for every $\epsilon>0$
    \begin{align*}
     d^{\frac{3}{5}} \Big( \frac{\min\{d,\lambda_1\}}{\lambda_1} \Big)^{1-\frac{3}{5}} \big\| P_{\lambda_1} \mc{C}^{\pm_1, M}_d \mc{I}^{\pm_1, M}\big( \phi_{\mu} \gamma^0 \Pi_{\pm_2} \varphi_{\lambda_2} \big) \big\|_{L^\frac{5}{3}_t L^2_x}
                                                                          &\lesa  \lambda_1^{-\frac{2}{5}} \Big\| \Big( \sum_{q \in Q_{\mu} }\|\phi_\mu P_q\varphi_{\lambda_2} \|_{L^2_x}^2\Big)^\frac{1}{2}\Big\|_{L^\frac{5}{3}_t} \notag \\
                                                                          &\lesa \lambda_1^{-\frac{2}{5}}  \| \phi_\mu \|_{L^{\frac{10}{3}}_{t,x}} \Big( \sum_{q\in Q_{\mu}} \| P_q \varphi_{\lambda_2} \|_{L^{\frac{10}{3}}_t L^5_x}^2 \Big)^\frac{1}{2}  \notag \\
                                                                          &\lesa \lambda_1^{-\frac{2}{5}}  ( \mu \lambda_2 )^{\frac{3}{10}} \Big( \frac{\mu}{\lambda_2} \Big)^{-\epsilon} \mu^{\frac{1}{2}} \| \phi_\mu \|_{V^2_{+, 1}} \| \varphi_{\lambda_2} \|_{V^2_{\pm_2, M}}.
    \end{align*}
As we may assume that $\max\{\mu, \lambda_1\} \approx \lambda_2$, by choosing $\epsilon>0$ small, we deduce that
    \begin{equation}\label{eqn-proof of F control subcrit-wave low Lp}
        d^{\frac{3}{5}} \Big( \frac{\min\{d,\lambda_1\}}{\lambda_1} \Big)^{1-\frac{3}{5}}  \big\| P_{\lambda_1} \mc{C}^{\pm_1, M}_d \mc{I}^{\pm_1, M}\big( \phi_{\mu} \gamma^0 \Pi_{\pm_2} \varphi_{\lambda_2} \big) \big\|_{L^\frac{5}{3}_t L^2_x}
                \lesa \mu^{\frac{1}{5}}  \Big( \frac{\mu}{\lambda_2} \Big)^{\frac{1}{11}} \mu^{\frac{1}{2}} \| \phi_\mu \|_{V^2_{+, 1}} \| \varphi_{\lambda_2} \|_{V^2_{\pm_2, M}}.
    \end{equation}
On the other hand, an application of \eqref{eqn:tri freq loc subcrit:V2 main} in Theorem \ref{thm:trilinear freq loc subcrit} gives
\begin{align}
  d^{\frac{1}{2}} \big\| P_{\lambda_1} \mc{C}^{\pm_1, M}_d &\mc{I}^{\pm_1, M}\big( \phi_{\mu} \gamma^0 \Pi_{\pm_2} \varphi_{\lambda_2} \big) \big\|_{L^2_{t,x}} \notag \\
                                                           &\lesa \big\| P_{\lambda_1} \Pi_{\pm_1} \mc{I}^{\pm_1, M}\big( \phi_{\mu} \gamma^0 \Pi_{\pm_2} \varphi_{\lambda_2} \big) \big\|_{V^2_{\pm_1, M}} \notag \\
                                                           &\lesa{}\mu^{\frac{\varrho}{2}} \Big(
                                                             \frac{\mu}{\lambda_2}\Big)^{\frac{1}{10}} \Big( \| \phi_\mu\|_{L^4_{t,x}} \lambda_2^{-\frac{1}{2}} \| \varphi_{\lambda_2} \|_{L^4_{t,x}} \Big)^{\theta_0}
                                                             \Big( \mu^{\frac{1}{2}} \| \phi_\mu \|_{V^2_{+,1}} \| \varphi_{\lambda_2} \|_{V^2_{\pm_2, M}}
                                                             \Big)^{1-\theta_0}.\label{eqn-proof of F control subcrit-L2 bound1}
\end{align}
Hence (\ref{eqn:proof of thm duhamel subcrit:psi bd Y}) in the region $\mu \lesa \lambda_2$ follows by interpolating between (\ref{eqn-proof of F control subcrit-wave low Lp}) and (\ref{eqn-proof of F control subcrit-L2 bound1}) and using the condition $\frac{1}{a} = \frac{1}{2} + \frac{\varrho}{32}$.

We now consider the case $\mu \gg \lambda_2$. For this frequency interaction, Theorem \ref{thm:trilinear freq loc subcrit} requires a $Y^{\pm, M}_{\lambda_2}$ norm on the righthand side. Thus, as our goal is to obtain a bound only using the $V^2_{\pm, M}$ norms, we have to work a little harder. We start by writing the product as
     \begin{equation}\label{eqn-proof of F control subcrit-Lp bound wave high}\phi_\mu \gamma^0 \Pi_{\pm_2} \varphi_{\lambda_2} = \Big(\phi_\mu \gamma^0 \Pi_{\pm_2} \varphi_{\lambda_2} - \sum_{d' \lesa \lambda_2} C_{\les d'}^{\pm_1, M} \big( C_{\les d'} \phi_{\mu} \gamma^0 \mc{C}^{\pm_2}_{d'} \varphi_{\lambda_2} \big) \Big)+ \sum_{d' \lesa \lambda_2} C_{\les d'}^{\pm_1, M} \big( C_{\les d'} \phi_{\mu} \gamma^0 \mc{C}^{\pm_2}_{d'} \varphi_{\lambda_2} \big).\end{equation}
The first term can be bounded by adapting the argument used in the case $\mu \lesa \lambda_2$ as here (\ref{eqn:tri freq loc subcrit:V2-A0}) in  Theorem \ref{thm:trilinear freq loc subcrit} gives a bound without using the $Y^{\pm, M}_{\lambda_2} $ norm. More precisely, letting $\beta = (\frac{d'}{\lambda_2})^\frac{1}{2}$ and exploiting null structure, we have
 \begin{align*}
     d^{\frac{3}{5}} \Big( \frac{d}{\lambda_1} \Big)^{1-\frac{3}{5}} \Big\| P_{\lambda_1} \mc{C}^{\pm_1}_d \mc{I}^{\pm_1, M}\Big( \sum_{d' \lesa \lambda_2} C_{\les d'}^{\pm_1, M} &\big( C_{\les d'} \phi_{\mu} \gamma^0 \mc{C}^{\pm_2}_{d'} \varphi_{\lambda_2} \big)\Big) \Big\|_{L^\frac{5}{3}_t L^2_x}\\
                                                                          &\lesa  \mu^{-\frac{2}{5}} \sum_{d'\lesa \lambda_2}  \Big\| \Big( \sum_{\substack{\kappa, \kappa'  \in \mc{C}_{\beta}\\|\kappa - \kappa'|\lesa \beta} }\big\| R_{\kappa}\Pi_{\pm_1} \big(C_{\les d'}\phi_\mu \gamma^0 R_{\kappa'} \mc{C}^{\pm_2}_{d'} \varphi_{\lambda_2}\big) \big\|_{L^2_x}^2\Big)^\frac{1}{2}
                                                                          \Big\|_{L^\frac{5}{3}_t} \notag \\
                                                                          &\lesa \mu^{-\frac{2}{5}} \sum_{d' \lesa \lambda_2} \beta \|  C_{\les d'} \phi_\mu \|_{L^{\frac{10}{3}}_{t,x}} \Big( \sum_{\kappa'  \in \mc{C}_{\beta} } \| R_{\kappa'} \mc{C}_{d'}^{\pm_2} \varphi_{\lambda_2} \|_{L^{\frac{10}{3}}_t L^5_x}^2 \Big)^\frac{1}{2}  \notag \\
                                                                          &\lesa \lambda_2^{\frac{1}{5}}  \Big( \frac{\lambda_2}{\mu} \Big)^{\frac{2}{5}} \mu^{\frac{1}{2}} \| \phi_\mu \|_{V^2_{+, 1}} \| \varphi_{\lambda_2} \|_{V^2_{\pm_2, M}}.
    \end{align*}
 Consequently, applying a similar argument to the $\phi_\mu \gamma^0  \Pi_{\pm_2 }\varphi_{\lambda_2}$ component, we deduce that
    \begin{align*} d^{\frac{3}{5}} \Big( \frac{d}{\lambda_1} \Big)^{1-\frac{3}{5}} \Big\|P_{\lambda_1} \mc{C}^{\pm_1}_d \mc{I}^{\pm_1, M}\Big(\phi_\mu \gamma^0 \Pi_{\pm_2 }\varphi_{\lambda_2} -  \sum_{d' \lesa \lambda_2} C_{\les d'}^{\pm_1, M} &\big( C_{\les d'} \phi_{\mu} \gamma^0 \mc{C}^{\pm_2}_{d'} \varphi_{\lambda_2} \big)\Big) \Big\|_{L^\frac{5}{3}_t L^2_x}\\
            &\lesa \lambda_2^{\frac{1}{5}}  \Big( \frac{\lambda_2}{\mu} \Big)^{\frac{2}{5}} \mu^{\frac{1}{2}} \| \phi_\mu \|_{V^2_{+, 1}} \| \varphi_{\lambda_2} \|_{V^2_{\pm_2, M}}.
    \end{align*}
 On the other hand, an application of Theorem \ref{thm:trilinear freq loc subcrit} gives
    \begin{align*}
  d^{\frac{1}{2}} \Big\| P_{\lambda_1} & \mc{C}^{\pm_1}_d \mc{I}^{\pm_1, M}\Big(\phi_\mu \gamma^0 \Pi_{\pm_2 }\varphi_{\lambda_2} -  \sum_{d' \lesa \lambda_2} C_{\les d'}^{\pm_1, M} \big( C_{\les d'} \phi_{\mu} \gamma^0 \mc{C}^{\pm_2}_{d'} \varphi_{\lambda_2} \big)\Big) \Big\|_{L^2_{t,x}}\\
                                                           &\lesa \big\| P_{\lambda_1} \Pi_{\pm_1} \mc{I}^{\pm_1, M}\Big(\phi_\mu \gamma^0 \Pi_{\pm_2 }\varphi_{\lambda_2} -  \sum_{d' \lesa \lambda_2} C_{\les d'}^{\pm_1, M} \big( C_{\les d'} \phi_{\mu} \gamma^0 \mc{C}^{\pm_2}_{d'} \varphi_{\lambda_2} \big)\Big)   \big\|_{V^2_{\pm_1, M}} \\
                                                           &\lesa{}\lambda_2^{\frac{\varrho}{2}} \Big(
                                                             \frac{\lambda_2}{\mu}\Big)^{\frac{1}{10}} \Big( \| \phi_\mu\|_{L^4_{t,x}} \lambda_2^{-\frac{1}{2}} \| \varphi_{\lambda_2} \|_{L^4_{t,x}} \Big)^{\theta_0}
                                                             \Big( \mu^{\frac{1}{2}} \| \phi_\mu \|_{V^2_{+,1}} \| \varphi_{\lambda_2} \|_{V^2_{\pm_2, M}}
                                                             \Big)^{1-\theta_0}
\end{align*}
and therefore interpolating as before gives the required bound for the first term in the decomposition (\ref{eqn-proof of F control subcrit-Lp bound wave high}). It remains to bound the second term in (\ref{eqn-proof of F control subcrit-Lp bound wave high}). Let $ 1< r<a$. Exploiting null structure and decomposing $\phi_{\mu}$ into caps of size $\alpha = (\frac{d' \lambda_2}{\mu^2})^\frac{1}{2}$, and $\varphi_{\lambda_2}$ into caps of size $\beta = ( \frac{d'}{\lambda_2})^\frac{1}{2}$,  we deduce that for all $0<\theta < \frac{1}{4}$ and $\max\{d, \lambda_2^{-1}\}\lesa d' \lesa \lambda_2$ we have
\begin{align}
     \big\| &P_{\lambda_1} \mc{C}^{\pm_1}_{d} C_{\les d'}^{\pm_1,M}\big( C_{\les d'} \phi_{\mu} \gamma^0 \mc{C}^{\pm_2}_{d'} \varphi_{\lambda_2} \big) \big\|_{L^a_t L^2_x}\notag\\
            &\lesa  d^{\frac{1}{r}-\frac{1}{a}} \Big\| \Big( \sum_{q \in Q_{\lambda_2}}\sum_{\substack{ \kappa, \kappa'' \in \mc{C}_\alpha \\ |\kappa - \kappa''| \lesa \alpha}} \sum_{\substack{\kappa' \in \mc{C}_{\beta}\\ |\kappa - \kappa'| \lesa \beta}}   \big\| P_{\lambda_1} R_{\kappa} \Pi_{\pm_1}\big( C_{\les d'} R_{\kappa''}P_q \phi_{\mu} \gamma^0 \mc{C}^{\pm_2}_{d'} R_{\kappa'} \varphi_{\lambda_2} \big) \big\|_{L^2_x}^2 \Big)^\frac{1}{2} \Big\|_{L^r_t} \notag\\
            &\lesa d^{\frac{1}{r}-\frac{1}{a}} \beta \Big( \sum_{q \in Q_{\lambda_2}} \sum_{\substack{ \kappa'' \in \mc{C}_\alpha}}  \| C_{\les d'} R_{\kappa''} P_q \phi_{\mu} \|_{L^{2r}_t L^\frac{2r}{r-1}_x}^2\Big)^\frac{1}{2}  \Big( \sum_{\kappa' \in \mc{C}_{\beta}}   \| \mc{C}^{\pm_2}_{d'} R_{\kappa'} \varphi_{\lambda_2} \|_{L^{2r}_{t,x}}^2 \Big)^\frac{1}{2} \notag\\
            &\lesa d^{\frac{1}{r}-\frac{1}{a}}\mu^{1-\frac{1}{r}} \Big( \frac{\lambda_2}{\mu} \Big)^{\frac{3}{2} - \frac{3}{2r} - 6\theta}  \Big( \frac{d'}{\lambda_2}\Big)^{1-\frac{1}{r} - 2\theta}  \Big( \| \phi_\mu \|_{L^4_{t,x}} \lambda_2^{-\frac{1}{2}} \| \varphi_{\lambda_2} \|_{L^4_{t,x}} \Big)^\theta \Big( \mu^\frac{1}{2} \| \phi_\mu \|_{V^2_{+, m}} \| \varphi_{\lambda_2} \|_{V^2_{\pm_2, M}}\Big)^{1-\theta} \label{eqn:proof of thm duhamel sub:Y bound fixed mod}
    \end{align}
where we used the bounds
$$ \Big( \sum_{\substack{ \kappa'' \in \mc{C}_\alpha}}\sum_{q\in \mc{Q}_{\lambda_2}}  \| C_{\les d'} R_{\kappa''} P_q \phi_{\mu} \|_{L^{2r}_t L^\frac{2r}{r-1}_x}^2\Big)^\frac{1}{2} \lesa \lambda_2^{\frac{1}{r}-\frac{1}{2}} \alpha^{-2\theta} \Big( \frac{\lambda_2}{\mu}\Big)^{\frac{1}{2} - \frac{1}{2r} - 4\theta} \| \phi_\mu \|_{L^4_{t,x}}^\theta \Big( \mu^\frac{1}{2} \| \phi_\mu \|_{V^2_{+, m}}\Big)^{1-\theta}$$
and
$$  \Big( \sum_{\kappa' \in \mc{C}_{\beta}}   \| \mc{C}^{\pm_2}_{d'} R_{\kappa'} \varphi_{\lambda_2} \|_{L^{2r}_{t,x}}^2 \Big)^\frac{1}{2}
            \lesa \beta^{-\theta} \lambda_2^{1-\frac{1}{r}} (d')^{\frac{1}{2}-\frac{1}{r}} \Big( \lambda_2^{-\frac{1}{2}} \| \varphi_{\lambda_2} \|_{L^4_{t,x}}\Big)^\theta \| \varphi_{\lambda_2} \|_{V^2_{\pm_2, M}}^{1-\theta}$$
which, similar to (\ref{eqn:thm trilinear subcrit:example of square sum control}), hold for all sufficiently small $\theta>0$ and follow from $L^p$ interpolation, Lemma \ref{lem:wave strichartz}, an application of H\"older's inequality, and the square sum bound for $V^2$. An application of \eqref{eq:global-nr} implies that after restricting the output to modulation $d$, the sum over the modulation is only over the region $\max\{ d, \lambda_2^{-1}\} \lesa d' \lesa \lambda_2$. Consequently, summing up \eqref{eqn:proof of thm duhamel sub:Y bound fixed mod} we deduce that for $1<r<a$ and $\theta>0$ sufficiently small
    \begin{align}
     \Big\| &P_{\lambda_1} \mc{C}^{\pm_1}_d \mc{I}^{\pm_1, M}\Big( \sum_{d' \lesa \lambda_2} C_{\les d'}^{\pm_1, M} \big( C_{\les d'} \phi_{\mu} \gamma^0 \mc{C}^{\pm_2}_{d'} \varphi_{\lambda_2} \big) \Big)\Big\|_{L^a_t L^2_x}\notag \\
            &\lesa d^{-1}  \sum_{\max\{ d, \lambda_2^{-1}\} \lesa d' \lesa \lambda_2}  \big\| P_{\lambda_1} \mc{C}^{\pm_1}_{d} \big( C_{\les d'} \phi_{\mu} \gamma^0 \mc{C}^{\pm_2}_{d'} \varphi_{\lambda_2} \big) \big\|_{L^a_t L^2_x}\notag \\
            &\lesa  d^{\frac{1}{r}-\frac{1}{a}-1} \mu^{1-\frac{1}{r}} \Big( \frac{\lambda_2}{\mu} \Big)^{\frac{3}{2} - \frac{3}{2r} - 6\theta}  \sum_{\max\{ d, \lambda_2^{-1}\} \lesa d' \lesa \lambda_2} \Big( \frac{d'}{\lambda_2}\Big)^{1-\frac{1}{r} - 2\theta}  \Big( \| \phi_\mu \|_{L^4_{t,x}} \lambda_2^{-\frac{1}{2}} \| \varphi_{\lambda_2} \|_{L^4_{t,x}} \Big)^\theta \Big( \mu^\frac{1}{2} \| \phi_\mu \|_{V^2_{+, m}} \| \varphi_{\lambda_2} \|_{V^2_{\pm_2, M}}\Big)^{1-\theta} \notag\\
            &\lesa d^{-\frac{1}{a}} \Big( \frac{d}{\mu} \Big)^{\frac{1}{r}-1} \Big( \frac{\lambda_2}{\mu} \Big)^{1-\frac{1}{r}}\Big( \| \phi_\mu \|_{L^4_{t,x}} \lambda_2^{-\frac{1}{2}} \| \varphi_{\lambda_2} \|_{L^4_{t,x}} \Big)^\theta \Big( \mu^\frac{1}{2} \| \phi_\mu \|_{V^2_{+, m}} \| \varphi_{\lambda_2} \|_{V^2_{\pm_2, M}}\Big)^{1-\theta}. \label{eqn:proof of thm duhamel sub:Y bd without loss}
    \end{align}
Therefore, taking $\frac{1}{r}=1 - 4(\frac{1}{a}-\frac{1}{2})$ we obtain \eqref{eqn:proof of thm duhamel subcrit:psi bd Y}. This completes the proof of Theorem \ref{thm:duhamel-sub}.

        \section{Multilinear Estimates in the critical
          case}\label{sec:multi-crit}
        In this section we consider the scale-invariant regime with a small amount of angular regularity. Here, after rescaling, we have $m=1$ and $M>0$.

        \begin{theorem}\label{thm:duhamel-crit}
          Let $M>0$ and $\sigma>0$. There exists $0<\theta<1$, $1<a<2$, and $b>0$ such that for all $s\g 0$
                \begin{equation}\label{eqn:thm duhamel crit:V psi}
\begin{split}
                    \big\|\Pi_{\pm_1}\mathcal{I}^{\pm_1,M}\big(\phi \gamma^0\Pi_{\pm_2}\varphi\big)\big\|_{\mb{V}^{s, \sigma}_{\pm_1,M}}
                        \lesa{}& \Big( \sup_{\mu, \lambda_2 \ge 1}\|\phi_\mu\|_{\mb{D}^{s}_{\sigma}}\|\varphi_{\lambda_2}\|_{\mb{D}^{-\frac12}_{\sigma}}\Big)^\theta
                        \Big(\|\phi\|_{\mb{V}^{s+\frac12, \sigma}_{+,1}} \|\varphi\|_{\mb{F}_{\pm_2, M}^{0, \sigma}}\Big)^{1-\theta}\\
&+\Big( \sup_{\mu, \lambda_2 \ge 1}\|\phi_\mu\|_{\mb{D}^{0}_{\sigma}}\|\varphi_{\lambda_2}\|_{\mb{D}^{s-\frac12}_{\sigma}}\Big)^\theta
                        \Big(\|\phi\|_{\mb{V}^{\frac12, \sigma}_{+,1}} \|\varphi\|_{\mb{F}_{\pm_2, M}^{s, \sigma}}\Big)^{1-\theta}
                      \end{split}
                    \end{equation}
        and
               \begin{equation}\label{eqn:thm duhamel crit:Y psi}
\begin{split}
            \big\|\Pi_{\pm_1}\mathcal{I}^{\pm_1,M}\big(\phi \gamma^0\Pi_{\pm_2}\varphi\big)\big\|_{\mb{Y}^{s, \sigma}_{\pm_1,M}}
                \lesa{}&  \Big( \sup_{\mu, \lambda_2 \ge 1}\|\phi_\mu\|_{\mb{D}^{s}_{\sigma}}\|\varphi_{\lambda_2}\|_{\mb{D}^{-\frac12}_{\sigma}}\Big)^\theta
            \Big(\|\phi\|_{\mb{V}^{s+\frac12, \sigma}_{+,1}} \|\varphi\|_{\mb{V}_{\pm_2, M}^{0, \sigma}}\Big)^{1-\theta} \\
&+\Big( \sup_{\mu, \lambda_2 \ge 1}\|\phi_\mu\|_{\mb{D}^{0}_{\sigma}}\|\varphi_{\lambda_2}\|_{\mb{D}^{s-\frac12}_{\sigma}}\Big)^\theta
            \Big(\|\phi\|_{\mb{V}^{\frac12, \sigma}_{+,1}} \|\varphi\|_{\mb{V}_{\pm_2, M}^{s, \sigma}}\Big)^{1-\theta}.
          \end{split}
        \end{equation}
Similarly,
          \begin{equation}\label{eqn:thm duhamel crit:V phi}
            \begin{split}
            \big\|\langle \nabla\rangle^{-1}\mathcal{I}^{+,1}\big(\overline{\Pi_{\pm_1}\psi} \Pi_{\pm_2}\varphi\big)\big\|_{\mb{V}^{s+\frac{1}{2}, \sigma}_{+,1}}
             \lesa{}& \Big(\sup_{\lambda_1,\lambda_2 \ge 1} \|\psi_{\lambda_1} \|_{\mb{D}^{s-\frac12}_{\sigma}} \|\varphi_{\lambda_2} \|_{\mb{D}^{-\frac12}_{\sigma}}\Big)^\theta \Big(\| \psi \|_{\mb{F}_{\pm_1, M}^{s, \sigma}} \|\varphi\|_{\mb{F}_{\pm_2, M}^{0, \sigma}}\Big)^{1-\theta}\\
&+\Big(\sup_{\lambda_1,\lambda_2 \ge 1} \|\psi_{\lambda_1} \|_{\mb{D}^{-\frac12}_{\sigma}} \|\varphi_{\lambda_2} \|_{\mb{D}^{s-\frac12}_{\sigma}}\Big)^\theta \Big(\| \psi \|_{\mb{F}_{\pm_1, M}^{0, \sigma}} \|\varphi\|_{\mb{F}_{\pm_2, M}^{s, \sigma}}\Big)^{1-\theta}.
            \end{split}
          \end{equation}
        \end{theorem}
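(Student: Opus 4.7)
The plan is to mirror the proof of Theorem \ref{thm:duhamel-sub} in structure: first establish frequency- and angular-frequency-localized versions of the three Duhamel estimates with explicit gain factors in both the dyadic frequency ratio and in the angular frequency ratio, then carry out a summation argument that uses the $L^4_{t,x}$ factors (raised to a small power $\theta$) to close. After applying \eqref{eqn:energy ineq for V2}, matters reduce to bounding the trilinear form
\[
\Big|\int_{\RR^{1+3}} P_\mu H_{N_3}\phi \;\overline{\Pi_{\pm_1} P_{\lambda_1} H_{N_1}\psi} \;\Pi_{\pm_2}P_{\lambda_2} H_{N_2}\varphi\; dx\,dt\Big|
\]
by the expected product of one $L^4$-type factor (raised to a small power $\theta$) and one $V^2$-type factor (raised to $1-\theta$), together with a prefactor $(\mathrm{small}/\mathrm{large})^\delta$ in both the radial and the angular frequencies.

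The frequency-localized estimate will be proved case by case following the pattern of Theorem \ref{thm:trilinear freq loc subcrit}, splitting according to whether the modulation is below or above the minimum radial frequency. In the off-diagonal regimes $\mu\ll\lambda_1\approx\lambda_2$ and $\mu\gg\min(\lambda_1,\lambda_2)$ with small modulation, the standard resonance identity still yields an angular separation $\alpha$ between the supports of $\widehat\psi$ and $\widehat\varphi$ and hence, after cap and cube decompositions subordinate to $\alpha$, the null form bounds \eqref{eqn:nullformbound1}--\eqref{eqn:null form bound II} produce a radial gain. In these regimes a small angular-frequency gain is then extracted as in \cite{Candy2016}: since the output is concentrated in a cap of diameter $\alpha$, the angular concentration bound \eqref{eqn:ang-con} converts an input cap of size $\alpha\lesssim 1$ into a factor $(\alpha N)^\epsilon$, and the relation $\alpha\lesssim (\text{small}/\text{large})^{1/2}$ supplies the dyadic $(\min/\max)^\delta$ in the angular frequencies, after which the $L^4_{t,x}$ and Klein-Gordon Strichartz interpolation used in Theorem \ref{thm:trilinear freq loc subcrit} produces the $L^4$ gain. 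The high-modulation regimes ($d\gg\mu$ for $\mu\ll\lambda_1\approx\lambda_2$, and $d\gg\min(\lambda_1,\lambda_2)$ for $\mu\gtrsim\min(\lambda_1,\lambda_2)$) proceed exactly as in Case~3 and Case~4 of the subcritical proof, using Theorem \ref{thm:cheap bilinear} and Bernstein to place one factor in $L^2_{t,x}$.

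The crux, which is the only genuinely new ingredient compared with the non-resonant case, is the diagonal low-modulation regime $\mu\lesssim\lambda_1\approx\lambda_2$ with $\pm_1=\pm_2$, where $\mathfrak M_{\pm_1,\pm_2}$ can vanish and the resonance identity gives no lower bound on $\alpha$. Here one decomposes $\psi_{\lambda_1}$ and $\varphi_{\lambda_2}$ into caps of size $\alpha\approx (d/\mu)^{1/2}$ (the smallest scale consistent with the modulation constraint) and applies Theorem \ref{thm:bilinear small scale KG} to each cap-pair to bound $\|\overline{R_\kappa\psi_{\lambda_1}} R_{\kappa'}\varphi_{\lambda_2}\|_{L^q_tL^r_x}$ uniformly in $V^2_{\pm,M}$ across the full bilinear range; this replaces the wave Strichartz square-sum bound used in the non-resonant proof. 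The absence of any radial gain in this regime is compensated by pure angular regularity: the output cap of size $\alpha$, combined with \eqref{eqn:ang-con}, produces a factor $(\alpha N_i)^\sigma$ for the inputs that, after summing the geometric series in $\alpha$ from $\lambda_1^{-1}$ up to $1$ and using $\sigma>0$, gives the required $(\min N/\max N)^\delta$ together with an $L^4_{t,x}$ interpolation factor from the $\phi_\mu$ slot via the Klein--Gordon Strichartz estimate.

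With the frequency-localized bounds in hand, the passage to \eqref{eqn:thm duhamel crit:V psi}--\eqref{eqn:thm duhamel crit:V phi} is the same dyadic summation as at the end of Subsection \ref{subsec:proof-duhamel-sub}: one separates the output frequency $\lambda_1$ into the two standard regimes (either $\lambda_1\approx\max\{\mu,\lambda_2\}$ or $\lambda_1\ll\mu\approx\lambda_2$), places the weights $\lambda^{2s}N^{2\sigma}$ on the larger input when $s,\sigma>0$, applies Cauchy--Schwarz to the $\ell^2$ sums defining $\mb{V}^{s,\sigma}$ and $\mb{F}^{s,\sigma}$, and uses the strict positivity of the radial and angular gains $\delta$ together with $\theta>0$ to absorb the $L^4$ suprema. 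The bound \eqref{eqn:thm duhamel crit:Y psi} follows from an $L^a_tL^2_x$ variant, in the spirit of \eqref{eqn-proof of F control subcrit-wave low Lp}--\eqref{eqn:proof of thm duhamel sub:Y bd without loss}, by interpolating between a Strichartz bound and the $V^2$ estimate just established; the main obstruction, and the step requiring the most care, is the resonant diagonal case above, as all other interactions admit a straightforward adaptation of the subcritical argument once one keeps track of the angular decompositions.
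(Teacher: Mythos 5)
Your proposal correctly identifies the overall two-step structure (frequency-localised trilinear estimates, then dyadic summation), and your description of the nonresonant regimes and the summation at the end is essentially in line with the paper. However, there are two significant gaps in your treatment of the resonant case, which is where the genuinely new difficulty lies.

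First, you locate the resonance in the wrong place. You write that the crux is ``the diagonal low-modulation regime $\mu\lesssim\lambda_1\approx\lambda_2$ with $\pm_1=\pm_2$, where $\mathfrak{M}_{\pm_1,\pm_2}$ can vanish.'' This is not the resonant configuration. By \cite[Lemma 8.7]{Candy2016}, the resonance function satisfies $\mathfrak{M}_{\pm_1,\pm_2}\gtrsim(\min\{\mu,\lambda_1,\lambda_2\})^{-1}$ whenever $\pm_1=\pm_2$, or $(\pm_1,\pm_2)=(-,+)$, or $M>\tfrac12$. The only surviving resonant configuration is $(\pm_1,\pm_2)=(+,-)$ with $\mu\approx\lambda_1\approx\lambda_2$ and $0<M\leq\tfrac12$.

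Second, and more substantively, your mechanism for handling the resonant interaction fails at small modulation scales. You propose to decompose $\psi_{\lambda_1},\varphi_{\lambda_2}$ into caps of size $\alpha\approx(d/\mu)^{1/2}$ at each modulation scale $d$ and sum a geometric series in $\alpha$ from $\lambda_1^{-1}$ up to $1$. But $\alpha\gtrsim\lambda_1^{-1}$ corresponds to $d\gtrsim\mu^{-1}$, which only covers the regime handled by the paper's Case~2. The truly resonant regime (the paper's Case~3) is $d\ll\mu^{-1}$, where the resonance identity gives no lower bound on $d$, and summing the modulation series over ever-smaller $d$ with caps of size $(d/\mu)^{1/2}\ll\mu^{-1}$ does not converge. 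The paper resolves this by \emph{not} decomposing into modulations below $\mu^{-1}$: it bounds the single leftover term $\int C_{\ll\mu^{-1}}\phi_{\mu,N}\,\overline{\mc{C}^{+}_{\ll\mu^{-1}}\psi_{\lambda_1,N_1}}\,\mc{C}^{-}_{\ll\mu^{-1}}\varphi_{\lambda_2,N_2}\,dx\,dt$ directly, using a Whitney-type cap decomposition at fixed scale $\mu^{-1}$ (not $(d/\mu)^{1/2}$), exploiting the built-in transversality from the resonance identity \eqref{eq:res-bound-pm}, and applying Theorem \ref{thm:bilinear small scale KG} combined with a Klein--Gordon Strichartz estimate on the wave component. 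The paper further distinguishes $M=\tfrac12$ (where the null structure still acts at all scales and gives a convergent $\beta=(d/\mu)^{1/2}$ sum) from $0<M<\tfrac12$ (where it does not); your proposal treats them uniformly.

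A further organisational difference worth noting: rather than re-deriving each case of the trilinear estimate with $L^4$ factors inserted, the paper isolates the $L^4$ gain once and for all in Lemma \ref{lem:tri L4 gain}, an interpolation between a crude H\"older/$L^2$ bound (with modulation loss) and a wave Strichartz bound with angular regularity, and then simply combines this lemma with the existing trilinear bounds cited from \cite{Candy2016}. This is cleaner than your case-by-case insertion of $L^4$ factors, although that aspect of your plan is not wrong per se.
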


        The proof will be postponed to Subsection
        \ref{subsec:proof-duhamel-crit} below.

        \subsection{The Trilinear Estimate}\label{subsec:tri-crit}
To obtain the $L^4_{t,x}$ norms in Theorem \ref{thm:duhamel-crit}, we use the following consequence of Theorem \ref{thm:bilinear small scale KG} which gives the required $L^4_{t,x}$ at a cost of modulation and high-low factors. However, as we will have a little room in our estimates, we can always absorb a small power of this loss elsewhere, which is sufficient to obtain the required Strichartz norms in Theorem \ref{thm:duhamel-crit}.

\begin{lemma}\label{lem:tri L4 gain}
Let $d>0$ and for $j=1, 2, 3$, let $m_j > 0$, and $\lambda_j, N_j \g 1$. If $\sigma>0$ there exists $\theta>0$ such that
     \begin{align*} \Big| \int_{\RR^{1+3}} C^{\pm_1, m_1}_d u_{\lambda_1, N_1} v_{\lambda_2, N_2} w_{\lambda_3, N_3} dx dt \Big| &\lesa N_{min}^{\sigma} \Big(\frac{\min\{d, \lambda_1\}}{\lambda_1}\Big)^{-\frac{1}{2}-\sigma} \lambda_{min}^{-1}\Big(\| u_{\lambda_1, N_1} \|_{L^4_{t,x}} \| v_{\lambda_2, N_3} \|_{L^4_{t,x}} \| w_{\lambda_3, N_3} \|_{L^4_{t,x}}\Big)^\theta \\
       &\qquad \qquad \cdot \Big( (\lambda_1 \lambda_2 \lambda_3)^{\frac{1}{2}}\| u_{\lambda_1, N_1} \|_{V^2_{\pm_1, m_1}} \| v_{\lambda_2, N_2} \|_{V^2_{\pm_2, m_2}} \| w_{\lambda_3, N_3} \|_{V^2_{\pm_3, m_3}}\Big)^{1-\theta}
     \end{align*}
where $\lambda_{min} = \min\{ \lambda_1, \lambda_2, \lambda_3\}$ and $N_{min} = \min\{N_1, N_2, N_3\}$.
\begin{proof}
We first observe that an application of H\"{o}lder's inequality gives
    \begin{align}
   & \Big| \int_{\RR^{1+3}} C^{\pm_1, m_1}_d u_{\lambda_1, N_1} v_{\lambda_2, N_2} w_{\lambda_3, N_3} dx dt \Big| \notag \\
\les{} & \| C^{\pm_1, m_1}_d u_{\lambda_1, N_1} \|_{L^2_{t,x}} \| v_{\lambda_2, N_2} \|_{L^4_{t,x}} \| w_{\lambda_3, N_3} \|_{L^4_{t,x}} \notag \\
      \lesa{}& d^{-\frac{1}{2}}   \| u_{\lambda_1, N_1} \|_{V^2_{\pm_1, m_1}} \| v_{\lambda_2, N_2} \|_{L^4_{t,x}} \| w_{\lambda_3, N_3} \|_{L^4_{t,x}} \notag \\
      \lesa{}& \Big( \frac{\min\{d, \lambda_1\}}{\lambda_1}\Big)^{-\frac{1}{2}} \lambda_{min}^{-1}\lambda_1^{\frac{1}{2}}  \| u_{\lambda_1, N_1} \|_{V^2_{\pm_1, m_1}} \| v_{\lambda_2, N_2} \|_{L^4_{t,x}} \| w_{\lambda_3, N_3} \|_{L^4_{t,x}}.
      \label{eqn:proof of lem tri L4 gain:L4 norms of v w}\end{align}
Thus it only remains to show we can gain a power of the $L^4_{t,x}$ norm of $u_{\lambda_1, N_1}$. To this end, we first consider the case where $N_2 = N_{min}$. An application of the wave Strichartz estimate with angular regularity in Lemma \ref{lem:wave strichartz} gives for $3<p<4$
    \begin{align}
     & \Big| \int_{\RR^{1+3}} C^{\pm_1, m_1}_d u_{\lambda_1, N_1} v_{\lambda_2, N_2} w_{\lambda_3, N_3} dx dt \Big| \notag \\
        \les{}& \| C^{\pm_1, m_1}_d u_{\lambda_1, N_1} \|_{L^4_{t,x}}^{\frac{4}{p}-1} \| C^{\pm_1, m_1}_d u_{\lambda_1, N_1} \|_{L^2_{t,x}}^{2-\frac{4}{p}} \| v_{\lambda_2, N_2} \|_{L^p_{t,x}} \| w_{\lambda_3, N_3} \|_{L^4_{t,x}} \notag \\
      \lesa{}& \Big( \frac{ \min\{ d, \lambda_1\}}{\lambda_1} \Big)^{ - \frac{5}{2}(\frac{4}{p}-1)}  \| u_{\lambda_1, N_1} \|_{L^4_{t,x}}^{\frac{4}{p}-1} \Big( d^{-\frac{1}{2}} \| u_{\lambda_1, N_1} \|_{V^2_{\pm_1, m_1}}\Big)^{2-\frac{4}{p}} N_2 \lambda_2^{\frac{3}{2}-\frac{4}{p}} \| v_{\lambda_2, N_2} \|_{V^2_{\pm_2, m_2}}\lambda_3^{\frac{1}{2}}  \| w_{\lambda_3, N_3} \|_{V^2_{\pm_3, m_3}}\notag\\
      \lesa{}& \Big( \frac{ \min\{ d, \lambda_1\}}{\lambda_1} \Big)^{ -\frac{3}{2}} \frac{N_{min}}{ \lambda_{min}} \| u_{\lambda_1, N_1} \|_{L^4_{t,x}}^{\frac{4}{p}-1} \Big( \lambda_1^{\frac{1}{2}} \| u_{\lambda_1, N_1} \|_{V^2_{\pm_1, m_1}}\Big)^{2-\frac{4}{p}} \lambda_2^{\frac{1}{2}} \| v_{\lambda_2, N_2} \|_{V^2_{\pm_2, m_2}} \lambda_3^{\frac{1}{2}} \| w_{\lambda_3, N_3} \|_{V^2_{\pm_3, m_3}}
      \label{eqn:proof of lem tri L4 gain:N2 small}
    \end{align}
where to remove the $C^{\pm_1, m_1}_d$ multiplier from the $L^4_{t,x}$ norm, we let $\alpha = (\frac{\min\{d, \lambda_1\}}{\lambda_1})^{\frac{1}{2}}$ and apply Lemma \ref{lem:disposable} to deduce that
    \begin{align}
      \| C^{\pm_1, m_1}_d u_{\lambda_1, N_1} \|_{L^4_{t,x}} \les{}& \sum_{\kappa \in \mc{C}_\alpha} \sum_{q \in \mc{Q}_{\alpha \lambda^2} } \| C^{\pm_1, m_1}_d R_{\kappa} P_q u_{\lambda_1, N_1} \|_{L^4_{t,x}} \notag\\
            \lesa{}& \alpha^{-2} \big( (\alpha \lambda)^{-3}  + 1 \big) \| u_{\lambda_1, N_1} \|_{L^4_{t,x}} \lesa \Big( \frac{\min\{d, \lambda_1\}}{\lambda_1} \Big)^{-\frac{5}{2}} \| u_{\lambda_1, N_1} \|_{L^4_{t,x}}. \label{eqn:proof of lem tri L4 gain:Cd dis}
    \end{align}
If we combine \eqref{eqn:proof of lem tri L4 gain:L4 norms of v w} and \eqref{eqn:proof of lem tri L4 gain:N2 small} we deduce that
    \begin{align*}
      & \Big| \int_{\RR^{1+3}} C^{\pm_1, m_1}_d u_{\lambda_1, N_1} v_{\lambda_2, N_2} w_{\lambda_3, N_3} dx dt \Big|\\
       \lesa{}& \Big[ \Big( \frac{\min\{d, \lambda_1\}}{\lambda_1}\Big)^{-\frac{1}{2}} \lambda_{min}^{-1} \lambda_1^{\frac{1}{2}}  \| u_{\lambda_1, N_1} \|_{V^2_{\pm_1, m_1}} \| v_{\lambda_2, N_2} \|_{L^4_{t,x}} \| w_{\lambda_3, N_3} \|_{L^4_{t,x}}\Big]^{1-\sigma} \\
       &\cdot \Big[\Big( \frac{ \min\{ d, \lambda_1\}}{\lambda_1} \Big)^{ - \frac{3}{2}}  \lambda_{min}^{-1} N_{min} \| u_{\lambda_1, N_1} \|_{L^4_{t,x}}^{\frac{4}{p}-1} \Big( \lambda_1^{\frac{1}{2}} \| u_{\lambda_1, N_1} \|_{V^2_{\pm_1, m_1}}\Big)^{2-\frac{4}{p}} \lambda_2^{\frac{1}{2}} \| v_{\lambda_2, N_2} \|_{V^2_{\pm_2, m_2}} \lambda_3^{\frac{1}{2}} \| w_{\lambda_3, N_3} \|_{V^2_{\pm_3, m_3}} \Big]^{\sigma}\\
       \lesa{}& N_{min}^{\sigma} \Big(\frac{\min\{d, \lambda_1\}}{\lambda_1}\Big)^{-\frac{3}{2}} \lambda_{min}^{-1}\Big(\| u_{\lambda_1, N_1} \|_{L^4_{t,x}} \| v_{\lambda_2, N_3} \|_{L^4_{t,x}} \| w_{\lambda_3, N_3} \|_{L^4_{t,x}}\Big)^{\sigma(\frac{4}{p}-1)} \\
       &\qquad \cdot \Big( (\lambda_1 \lambda_2 \lambda_3)^{\frac{1}{2}}\| u_{\lambda_1, N_1} \|_{V^2_{\pm_1, m_1}} \| v_{\lambda_2, N_2} \|_{V^2_{\pm_2, m_2}} \| w_{\lambda_3, N_3} \|_{V^2_{\pm_3, m_3}}\Big)^{1- \sigma(\frac{4}{p}-1)}
    \end{align*}
as required. A similar argument gives the case $N_3 = N_{min}$. Thus
it remains to consider $N_1=N_{min}$. It also straightforward to deal with the case $\lambda_2 \approx 1$, since we may then apply the Klein-Gordon Stichartz estimate. More precisely, an application of Lemma \ref{lem:KG strichartz} to $v_{\lambda_2, N_2}$ together with $L^q_{t,x}$ interpolation, gives for $\frac{10}{3}\les p < 4$
    \begin{align}
      &\Big| \int_{\RR^{1+3}} C^{\pm_1, m_1}_d u_{\lambda_1, N_1} v_{\lambda_2, N_2} w_{\lambda_3, N_3} dx dt \Big| \notag \\
        \les{}& \| C^{\pm_1, m_1}_d u_{\lambda_1, N_1} \|_{L^4_{t,x}}^{\frac{4}{p}-1} \| C^{\pm_1, m_1}_d u_{\lambda_1, N_1} \|_{L^2_{t,x}}^{2-\frac{4}{p}} \| v_{\lambda_2, N_2} \|_{L^p_{t,x}} \| w_{\lambda_3, N_3} \|_{L^4_{t,x}} \notag \\
      \lesa{}& \Big( \frac{ \min\{ d, \lambda_1\}}{\lambda_1} \Big)^{ - \frac{5}{2}(\frac{4}{p}-1)}  \lambda_3^{\frac{1}{2}}  \| u_{\lambda_1, N_1} \|_{L^4_{t,x}}^{\frac{4}{p}-1} \Big( d^{-\frac{1}{2}} \| u_{\lambda_1, N_1} \|_{V^2_{\pm_1, m_1}}\Big)^{2-\frac{4}{p}} \| v_{\lambda_2, N_2} \|_{V^2_{\pm_2, m_2}} \| w_{\lambda_3, N_3} \|_{V^2_{\pm_3, m_3}}\notag\\
      \lesa{}& \Big( \frac{ \min\{ d, \lambda_1\}}{\lambda_1} \Big)^{ \frac{3}{2}- \frac{8}{p}}  \lambda_{min}^{-1}  \| u_{\lambda_1, N_1} \|_{L^4_{t,x}}^{\frac{4}{p}-1} \Big( \lambda_1^{\frac{1}{2}} \| u_{\lambda_1, N_1} \|_{V^2_{\pm_1, m_1}}\Big)^{2-\frac{4}{p}} \lambda_2^{\frac{1}{2}} \| v_{\lambda_2, N_2} \|_{V^2_{\pm_2, m_2}} \lambda_3^{\frac{1}{2}} \| w_{\lambda_3, N_3} \|_{V^2_{\pm_3, m_3}}
      \label{eqn:proof of lem tri L4 gain:lambda approx one}
    \end{align}
where we applied the bound \eqref{eqn:proof of lem tri L4 gain:Cd dis}. Combining \eqref{eqn:proof of lem tri L4 gain:L4 norms of v w} and \eqref{eqn:proof of lem tri L4 gain:lambda approx one} we obtain the required bound in the case where $\lambda_2 \approx 1$. A similar argument gives the case $\lambda_3 \approx 1$.

It remains to gain a power of the $L^4$ norm of $u_{\lambda_1, N_1}$, in the case where $N_{min} = N_1$ and $\lambda_2, \lambda_3 \gg 1$. Clearly we may also assume that $\lambda_2\g \lambda_3$. In this region we apply the bilinear restriction estimate to deduce the required bound. The first step is to observe that, as either $\lambda_1 \ll \lambda_2 \approx \lambda_3$, or $\lambda_1 \approx \max\{\lambda_2, \lambda_3\}$, an angular Whitney type decomposition gives the identity
    \begin{align} \int_{\RR^{1+3}} &C^{\pm_1, m_1}_d u_{\lambda_1, N_1} v_{\lambda_2, N_2} w_{\lambda_3, N_3} dx dt\notag \\
                &= \sum_{\frac{1}{ \lambda_3} \lesa \ell \lesa \max\{1,\frac{\lambda_1}{ \lambda_3}\}} \sum_{\substack{\kappa, \kappa' \in \mc{C}_{\ell} \\ |\pm_2 \kappa - \pm_3\kappa'| \approx \ell}} \int_{\RR^{1+3}} C^{\pm_1, m_1}_d u_{\lambda_1, N_1} R_{\kappa} v_{\lambda_2, N_2} R_{\kappa'} w_{\lambda_3, N_3} dx dt \notag\\
                &\qquad \qquad \qquad \qquad  \qquad \qquad  + \sum_{\substack{\kappa, \kappa' \in \mc{C}_{\frac{1}{ \lambda_3}} \\ |\pm_2 \kappa - \pm_3\kappa'| \lesa \frac{1}{\lambda_3}}} \int_{\RR^{1+3}} C^{\pm_1, m_1}_d u_{\lambda_1, N_1} R_{\kappa} v_{\lambda_2, N_2} R_{\kappa'} w_{\lambda_3, N_3} dx dt. \label{eqn:proof of lem tri L4 gain:initial ang decomp}
    \end{align}
To estimate the first term in \eqref{eqn:proof of lem tri L4 gain:initial ang decomp}, we note that as we may restrict the support of $\widehat{u}_{\lambda_1, N_1}$ to lie in the set
        $$\{ -(\xi + \eta) \mid \xi \in \kappa, \eta \in \kappa', |\xi| \approx \lambda_2, |\eta| \approx \lambda_3\} $$
we have  for $\kappa, \kappa' \in \mc{C}_\ell$ with $|\pm_2 \kappa - \pm_3 \kappa'| \approx \ell$
    \begin{align*}
    \int_{\RR^{1+3}} C^{\pm_1, m_1}_d u_{\lambda_1, N_1} R_{\kappa} v_{\lambda_2, N_2} R_{\kappa'} w_{\lambda_3, N_3} dx dt
    &=  \sum_{\kappa'' \in \mc{E}} \int_{\RR^{1+3}} C^{\pm_1, m_1}_d
      R_{\kappa''} u_{\lambda_1, N_1} R_{\kappa} v_{\lambda_2, N_2}
      R_{\kappa'} w_{\lambda_3, N_3} dx dt
    \end{align*}
where $
\mc{E}$ is the set of all $\kappa''\in \mc{C}_{\ell
  \frac{\lambda_3}{\lambda_1}}$ satisfying $\min\{|\kappa''-\kappa|, |\kappa'' + \kappa|\} \lesa \ell \frac{\lambda_3}{\lambda_1}\}$. Notice that we have $\# \mc{E}\lesa 1$.
Consequently, applying H\"older's inequality, the angular concentration bound \eqref{eqn:ang-con}, and Theorem \ref{thm:bilinear small scale KG}, we deduce that for  for $\kappa, \kappa' \in \mc{C}_\ell$ with $|\pm_2 \kappa - \pm_3 \kappa'| \approx \ell$ and $\frac{3}{2}<p<2$ we have
      \begin{align*}& \Big|\int_{\RR^{1+3}} C^{\pm_1, m_1}_d u_{\lambda_1, N_1} R_{\kappa} v_{\lambda_2, N_2} R_{\kappa'} w_{\lambda_3, N_3}  dx dt\Big|\\
                \lesa{}& \Big(\sup_{ \kappa'' \in \mc{C}_{\ell \frac{\lambda_3}{\lambda_1} }}
                    \| C^{\pm_1, m_1}_d R_{\kappa''} u_{\lambda_1, N_1}\|_{L^4_{t,x}}^{4(\frac{1}{p}-\frac{1}{2})} \| C^{\pm_1, m_1}_d R_{\kappa''} u_{\lambda_1, N_1}\|_{L^2_{t,x}}^{3-\frac{4}{p}} \Big) \| R_{\kappa} v_{\lambda_2, N_2} R_{\kappa'} w_{\lambda_3, N_3}\|_{L^p_{t,x}} \\
                \lesa{}& N_1^{\sigma} \Big(\ell \frac{\lambda_3}{\lambda_1}\Big)^{\sigma} \ell^{2-\frac{4}{p}} \lambda_3^{\frac{7}{2}-\frac{5}{p}-\epsilon} \lambda_2^{\frac{1}{q} - \frac{1}{2} + \epsilon}  \| C^{\pm_1, m_1}_d u_{\lambda_1, N_1}\|_{L^4_{t,x}}^{4(\frac{1}{p}-\frac{1}{2})} \Big( d^{-\frac{1}{2}} \| u_{\lambda_1, N_1}\|_{V^2_{\pm_1, m_1}}\Big)^{3-\frac{4}{p}}\\
                &\qquad \qquad \qquad \qquad \qquad \qquad \qquad \qquad \cdot \| R_{\kappa} v_{\lambda_2, N_2}\|_{V^2_{\pm_2, m_2}} \| R_{\kappa'} w_{\lambda_3, N_3}\|_{V^2_{\pm_3, m_3}}  \\
                \lesa{}& \ell^{\sigma + 2 - \frac{4}{p}} N_1^\sigma \Big( \frac{\min\{d, \lambda_1\}}{\lambda_1} \Big)^{\frac{7}{2}-\frac{8}{p}} \lambda_{min}^{-1} \| u_{\lambda_1, N_1}\|_{L^4_{t,x}}^{4(\frac{1}{p}-\frac{1}{2})} \Big( \lambda_1^{\frac{1}{2}} \| u_{\lambda_1, N_1}\|_{V^2_{\pm_1, m_1}}\Big)^{3-\frac{4}{p}} \\
                &\qquad \qquad \qquad \qquad \qquad \qquad \qquad \qquad \cdot \lambda_2^{\frac{1}{2}} \| R_{\kappa} v_{\lambda_2, N_2}\|_{V^2_{\pm_2, m_2}}  \lambda_3^{\frac{1}{2}} \| R_{\kappa'} w_{\lambda_3, N_3}\|_{V^2_{\pm_3, m_3}}
    \end{align*}
where we again applied the bound \eqref{eqn:proof of lem tri L4 gain:Cd dis} to dispose of the $C_d$ multiplier. Hence summing up over caps, letting $\frac{1}{p} = \frac{1}{2} + \frac{\sigma}{8}$, and applying the square sum bound in $V^2$, we deduce that
    \begin{align*}
     & \sum_{\frac{1}{ \lambda_3} \lesa \ell \lesa \max\{1,\frac{\lambda_1}{ \lambda_3}\}} \sum_{\substack{\kappa, \kappa' \in \mc{C}_{\ell} \\ |\pm_2 \kappa - \pm_3\kappa'| \approx \ell}} \Big|\int_{\RR^{1+3}} C^{\pm_1, m_1}_d u_{\lambda_1, N_1} R_{\kappa} v_{\lambda_2, N_2} R_{\kappa'} w_{\lambda_3, N_3} dx dt\Big|\\
        \lesa{}& N_1^\sigma \Big( \frac{\min\{d, \lambda_1\}}{\lambda_1} \Big)^{-\frac{1}{2}-\sigma} \lambda_{min}^{-1} \| u_{\lambda_1, N_1}\|_{L^4_{t,x}}^{\frac{\sigma}{2}} \Big( \lambda_1^{\frac{1}{2}} \| u_{\lambda_1, N_1}\|_{V^2_{\pm_1, m_1}}\Big)^{1-\frac{\sigma}{2}} \lambda_2^{\frac{1}{2}} \| R_{\kappa} v_{\lambda_2, N_2}\|_{V^2_{\pm_2, m_2}}  \lambda_3^{\frac{1}{2}} \| R_{\kappa'} w_{\lambda_3, N_3}\|_{V^2_{\pm_3, m_3}}.
    \end{align*}
Together with \eqref{eqn:proof of lem tri L4 gain:L4 norms of v w}, we obtain an acceptable bound for the first term in \eqref{eqn:proof of lem tri L4 gain:initial ang decomp}. To bound the second term in \eqref{eqn:proof of lem tri L4 gain:initial ang decomp}, we apply a similar argument together with the Klein-Gordon Strichartz estimate to deduce that for $\frac{5}{3}<p<2$ we have
    \begin{align*}
      &\sum_{\substack{\kappa, \kappa' \in \mc{C}_{\frac{1}{ \lambda_3}} \\ |\pm_2 \kappa - \pm_3\kappa'| \lesa \frac{1}{\lambda_3}}} \Big|\int_{\RR^{1+3}} C^{\pm_1, m_1}_d u_{\lambda_1, N_1} R_{\kappa} v_{\lambda_2, N_2} R_{\kappa'} w_{\lambda_3, N_3} dx dt\Big|\\
            \lesa{}& \sum_{\substack{\kappa, \kappa' \in \mc{C}_{\frac{1}{ \lambda_3}} \\ |\pm_2 \kappa - \pm_3\kappa'| \lesa \frac{1}{\lambda_3}}} \sup_{\kappa'' \in \mc{C}_{\ell \frac{\lambda_3}{\lambda_1}}} \| C^{\pm_1, m_1}_d R_{\kappa''} u_{\lambda_1, N_1}\|_{L^4_{t,x}}^{4(\frac{1}{p}-\frac{1}{2})} \| C^{\pm_1, m_1}_d R_{\kappa''} u_{\lambda_1, N_1}\|_{L^2_{t,x}}^{3-\frac{4}{p}} \| R_{\kappa} v_{\lambda_2, N_2} \|_{L^{2p}_{t,x}} \| R_{\kappa'} w_{\lambda_3, N_3} \|_{L^{2p}_{t,x}} \\
            \lesa{}& N_1^\sigma \Big( \frac{\min\{d,
              \lambda_1\}}{\lambda_1} \Big)^{\frac{7}{2} -
              \frac{8}{p}} \lambda_1^{8(\frac{1}{p} -\frac{1}{2}) -
              \sigma} \lambda_{min}^{-1}   \|u_{\lambda_1,
              N_1}\|_{L^4_{t,x}}^{4(\frac{1}{p}-\frac{1}{2})} \Big(
              \lambda_1^{-\frac{1}{2}}\| u_{\lambda_1,
              N_1}\|_{V^2_{\pm_1, m_1}}\Big)^{3-\frac{4}{p}} \\
&\qquad \cdot \lambda_2^{\frac{1}{2}} \| v_{\lambda_2, N_2} \|_{V^2_{\pm_2, m_2}} \lambda_3^{\frac{1}{2}} \| w_{\lambda_3, N_3} \|_{V^2_{\pm_3, N_3}}.
    \end{align*}
Choosing $\frac{1}{p} = \frac{1}{2} + \frac{\sigma}{8}$ as before, and combining the resulting bound with \eqref{eqn:proof of lem tri L4 gain:L4 norms of v w}, the lemma follows.
\end{proof}
\end{lemma}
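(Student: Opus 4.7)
The plan is to obtain the estimate by $L^p$-interpolation between a baseline Cauchy--Schwarz type inequality—which places $C^{\pm_1,m_1}_d u_{\lambda_1,N_1}$ in $L^2_{t,x}$ and the other two factors in $L^4_{t,x}$—and a refined bound that extracts a small positive power of $\|u_{\lambda_1,N_1}\|_{L^4_{t,x}}$. The baseline follows immediately from H\"older together with $\|C^{\pm,m}_d u\|_{L^2_{t,x}} \lesa d^{-\frac{1}{2}} \|u\|_{V^2_{\pm,m}}$: after writing $d^{-\frac{1}{2}} = (\min\{d,\lambda_1\}/\lambda_1)^{-\frac{1}{2}} \lambda_1^{-\frac{1}{2}}$ in the regime $d\les \lambda_1$ (the regime $d\g \lambda_1$ being even better), one recovers precisely the shape of the right-hand side, but with trivial $u$-exponent. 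So the whole task reduces to gaining a tiny piece of $\|u_{\lambda_1,N_1}\|_{L^4_{t,x}}$.

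To produce the refined bound I would apply the $L^p$-interpolation $\|C_d u\|_{L^p_{t,x}} \les \|C_d u\|_{L^4_{t,x}}^{4/p-1} \|C_d u\|_{L^2_{t,x}}^{2-4/p}$ for some $p\in (2,4)$, then dispose of the $C^{\pm_1,m_1}_d$ multiplier in the $L^4$ factor via Lemma~\ref{lem:disposable}, at the cost of a polynomial loss in $\min\{d,\lambda_1\}/\lambda_1$ which is recovered by choosing the interpolation weight small. What remains is to estimate $v_{\lambda_2,N_2} w_{\lambda_3,N_3}$ in some $L^q_t L^r_x$; the execution depends on which index achieves $N_{min}$ and on the relative sizes of $\lambda_2,\lambda_3$.

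When $N_{min}\in\{N_2,N_3\}$, the angular-regularity wave Strichartz estimate in Lemma~\ref{lem:wave strichartz} applied to whichever of $v,w$ carries $N_{min}$ yields the required $N_{min}^\sigma$ factor together with the correct power of $\lambda_{min}$. When instead $N_1=N_{min}$ but $\min\{\lambda_2,\lambda_3\}\approx 1$, the Klein--Gordon Strichartz estimate of Lemma~\ref{lem:KG strichartz} does the job directly on the low-frequency factor.

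The main obstacle is the last case $N_1=N_{min}$ with $\lambda_2,\lambda_3\gg 1$, since the angular regularity is pinned on the modulation-localised factor $u_{\lambda_1,N_1}$ and cannot be transferred through linear Strichartz bounds. Here I would exploit the bilinear adjoint restriction inequality of Theorem~\ref{thm:bilinear small scale KG}. After an angular Whitney-type decomposition of $v_{\lambda_2,N_2}\, w_{\lambda_3,N_3}$ into transverse cap pairs $\kappa,\kappa'\in\mc{C}_\ell$ with $|\pm_2\kappa-\pm_3\kappa'|\approx \ell$ and $1/\lambda_3\lesa \ell\lesa \max\{1,\lambda_1/\lambda_3\}$, Theorem~\ref{thm:bilinear small scale KG} produces the product bound with the correct $\ell$-power; the Fourier support of $R_\kappa v\cdot R_{\kappa'} w$ then forces $u_{\lambda_1,N_1}$ to live in a fixed cap of size $\ell\lambda_3/\lambda_1$, and the angular concentration bound~\eqref{eqn:ang-con} applied there delivers $N_1^\sigma$ at a cost of $\ell^\sigma$, which sums against the surplus $\ell$-power from the bilinear estimate. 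The degenerate caps $|\pm_2\kappa-\pm_3\kappa'|\lesa 1/\lambda_3$, where transversality fails, are treated separately by Klein--Gordon Strichartz on each of $v,w$. Choosing $1/p=1/2+\sigma/8$ preserves a uniform margin in every sum, and a final interpolation against the baseline yields the stated bound for some $\theta=\theta(\sigma)>0$.
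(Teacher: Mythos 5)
Your proposal follows essentially the same route as the paper's proof: a baseline $L^2\times L^4\times L^4$ estimate, $L^p$-interpolation to extract a small power of $\|u_{\lambda_1,N_1}\|_{L^4_{t,x}}$ with $C^{\pm_1,m_1}_d$ disposed of via Lemma \ref{lem:disposable}, wave Strichartz with angular regularity when $N_{min}\in\{N_2,N_3\}$, Klein--Gordon Strichartz when a low frequency $\lambda_j\approx 1$ is present, and in the hard case $N_1=N_{min}$ with $\lambda_2,\lambda_3\gg 1$ the angular Whitney decomposition combined with the bilinear restriction bound of Theorem \ref{thm:bilinear small scale KG}, the Fourier-support localisation of $u$ to a cap of size $\ell\lambda_3/\lambda_1$, the angular concentration bound \eqref{eqn:ang-con} with $1/p=1/2+\sigma/8$, and a final interpolation against the baseline. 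This is the paper's argument in both structure and detail.
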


We now give the proof of the main step in the proof of Theorem \ref{thm:duhamel-crit}.

        \begin{theorem}\label{thm:tri freq loc crit}
          Let $M>\frac{1}{2}$, $0<\varrho \ll 1$, $\frac{5}{3}<a<2$ and $0<b<\frac{\varrho}{4}$. Define
            $$ \mb{A} = \| \phi_{\mu, N} \|_{L^4_{t,x}} \lambda_1^{-\frac{1}{2}} \| \psi_{\lambda_1, N_1} \|_{L^4_{t,x}} \lambda_2^{-\frac{1}{2}} \| \varphi_{\lambda_2, N_2} \|_{L^4_{t, x}}.$$
          There exists $\theta_0\in (0,1)$ such that, if $\mu \lesa \lambda_1\sim \lambda_2$ we have
          \begin{equation}\label{eqn:tri freq loc crit V2:main} \begin{split} \Big|\int_{\RR^{3+1}} \phi_{\mu,N} &\overline{ \Pi_{\pm_1} \psi_{\lambda_1,N_1}} \Pi_{\pm_2} \varphi_{\lambda_2,N_2} dx dt\Big| \\
              &\lesa N_{min}^{\varrho} \Big( \frac{\mu}{\lambda_1}\Big)^{ \frac{1}{10} }   \mb{A}^{\theta_0} \big( \mu^\frac{1}{2} \| \phi_{\mu,N} \|_{V^2_{+, 1}} \| \psi_{\lambda_1,N_1} \|_{V^2_{\pm_1,M}} \| \varphi_{\lambda_2,N_2} \|_{V^2_{\pm_2, M}}\big)^{1-\theta_0}.
              \end{split}
          \end{equation}
          On the other hand, when $\mu \sim \lambda_1 \gg \lambda_2$ we have
          \begin{equation}\label{eqn:tri freq loc crit V2:main-A0} \begin{split}  \Big|\int_{\RR^{3+1}} &\phi_{\mu,N} \overline{ \Pi_{\pm_1} \psi_{\lambda_1,N_1}} \Pi_{\pm_2} \varphi_{\lambda_2,N_2} - \sum_{\lambda_2^{-1} \lesa d \lesa \lambda_2} C_{\les d} \phi_{\mu, N} \overline{\mc{C}^{\pm_1}_{\les d} \psi_{\lambda_1, N_1}} \mc{C}^{\pm_2}_d \varphi_{\lambda_2, N_2} dx dt \Big| \\
              &\lesa N_{min}^{\varrho} \Big( \frac{ \lambda_2}{\mu}\Big)^{\frac{1}{10}}  \mb{A}^{\theta_0} \big( \mu^\frac{1}{2} \| \phi_{\mu,N} \|_{V^2_{+, 1}} \| \psi_{\lambda_1, N_1} \|_{V^2_{\pm_1, M}} \| \varphi_{\lambda_2, N_2} \|_{V^2_{\pm_2, M}}\big)^{1-\theta_0}\end{split}
          \end{equation}
          and
          \begin{equation}\label{eqn:tri freq loc crit:d bound with V}
            \begin{split} \sum_{\lambda_2^{-1} \lesa d \lesa \lambda_2} \Big|\int_{\RR^{3+1}}& C_{\les d} \phi_{\mu, N} \overline{\mc{C}^{\pm_1}_{\les d} \psi_{\lambda_1, N_1} } \mc{C}^{\pm_2}_d \varphi_{\lambda_2, N_2} dx dt \Big| \\
              &\lesa (\min\{N, N_1\})^{\varrho} \Big( \frac{ \lambda_2}{\mu}\Big)^{\frac{1}{10}}  \mb{A}^{\theta_0} \big( \mu^\frac{1}{2} \| \phi_{\mu,N} \|_{V^2_{+, 1}} \| \psi_{\lambda_1, N_1} \|_{V^2_{\pm_1, M}} \| \varphi_{\lambda_2, N_2} \|_{V^2_{\pm_2, M}}\big)^{1-\theta_0}.\end{split}
          \end{equation}
          Moreover, if we also use the $Y^{\pm, M}$ norm, we have
          \begin{equation}\label{eqn:tri freq loc crit:d bound with Y}
            \begin{split} \sum_{\lambda_2^{-1} \lesa d \lesa \lambda_2} \Big|\int_{\RR^{3+1}}& C_{\les d} \phi_{\mu, N} \overline{\mc{C}^{\pm_1}_{\les d} \psi_{\lambda_1, N_1} } \mc{C}^{\pm_2}_d \varphi_{\lambda_2, N_2} dx dt \Big| \\
              &\lesa N_{min}^{\varrho} \Big( \frac{ \lambda_2}{\mu}\Big)^{\frac{1}{4}(\frac{1}{a}-\frac{1}{2})}  \mb{A}^{\theta_0} \big( \mu^\frac{1}{2} \| \phi_{\mu,N} \|_{V^2_{+, 1}} \| \psi_{\lambda_1, N_1} \|_{V^2_{\pm_1, M}} \| \varphi_{ N_2} \|_{Y^{\pm_2, M}_{\lambda_2}}\big)^{1-\theta_0}.\end{split}
          \end{equation}
          Analogous bounds hold in the case $\lambda_1 \ll \lambda_2$.
        \end{theorem}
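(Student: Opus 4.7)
The overall strategy mirrors the subcritical proof of Theorem \ref{thm:trilinear freq loc subcrit}, so I would begin by decomposing the product $\phi_{\mu,N}\,\overline{\Pi_{\pm_1}\psi_{\lambda_1,N_1}}\,\gamma^0\,\Pi_{\pm_2}\varphi_{\lambda_2,N_2}$ into the three modulation pieces $A_0, A_1, A_2$ according to which of the three functions dominates in modulation, just as in the non-resonant analysis. The resonance identities \cite[Lemma 8.7]{Candy2016} still apply and restrict the relevant range of modulations $d$: in the high-high to low case $\mu\lesa\lambda_1\approx\lambda_2$ the sum runs over $\mu^{-1}\lesa d\lesa \mu$ (when also $d\lesa\mu$), while in the high-low case $\mu\approx\lambda_1\gg\lambda_2$ the sum runs over $\lambda_2^{-1}\lesa d\lesa\lambda_2$. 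Unlike the subcritical proof, we cannot rely on the global non-resonance bound \eqref{eq:global-nr}, so $M$ is unrestricted here and only the output modulation controls the geometry.

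For each piece, I would repeat the case-by-case H\"older analysis from the subcritical proof, inserting caps $\kappa\in\mc{C}_\alpha$ with $\alpha=(d\mu/\lambda_1^2)^{1/2}$ or $\alpha=(d\lambda_2/\mu^2)^{1/2}$, cubes, and null-form bounds \eqref{eqn:null form bound I}--\eqref{eqn:null form bound II}. The critical modification is that wherever the subcritical proof picks up a small positive power of $\mu$ or $\lambda_2$ (which is what supplies the $\mu^\varrho$/$\lambda_2^\varrho$ factor), I replace this by the angular concentration bound \eqref{eqn:ang-con} applied to whichever function carries angular frequency $N_{\min}$; this converts a cap-index square-sum loss $\alpha^{-2\delta}$ into $(\alpha N_{\min})^{2\delta}$ for small $\delta$, extracting $N_{\min}^\varrho$ as required. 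In the moderate and high modulation regimes (Cases 3 and 4 of the subcritical proof), the cheap bilinear Theorem \ref{thm:cheap bilinear} still applies and the same interpolation with Klein-Gordon Strichartz gives the desired decay in $d$. The $L^4$-gain $\mb{A}^{\theta_0}$ is then obtained either by tensoring in H\"older's inequality as in the subcritical bounds \eqref{eqn:thm trilinear subcrit:case 1 A_0}--\eqref{eqn:thm trilinear subcrit:case 2 A2 without Y}, or, when needed, by applying Lemma \ref{lem:tri L4 gain} directly after the cap/cube decomposition; choosing $\theta_0$ as the minimum of the exponents from all subcases yields \eqref{eqn:tri freq loc crit V2:main} and, in the $\mu\gg\lambda_2$ regime, \eqref{eqn:tri freq loc crit V2:main-A0}.

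For \eqref{eqn:tri freq loc crit:d bound with V}, the target is the piece $\sum_{d}C_{\les d}\phi\,\overline{\mc{C}^{\pm_1}_{\les d}\psi}\,\mc{C}^{\pm_2}_d\varphi$ corresponding to the $A_2$-type modulation localisation, which is the source of the logarithmic divergence in $d$. Here I would insert Theorem \ref{thm:bilinear small scale KG} to bound $\|P_\mu(\overline{\mc{C}_{\les d}^{\pm_1}\psi_{\lambda_1,N_1}}\,\mc{C}_d^{\pm_2}\varphi_{\lambda_2,N_2})\|_{L^q_tL^r_x}$ by the product of $V^2$ norms with a $(d/\lambda_2)^{2-2/r-2/q}$ factor, combined with $C_{\les d}\phi_{\mu,N}$ placed in the dual Strichartz space via Lemma \ref{lem:KG strichartz}. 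The angular regularity $N_{\min}^\varrho$ emerges by applying \eqref{eqn:ang-con} to whichever $H_N$-localised factor has the smallest angular frequency. Summing the resulting fixed-$d$ bound over $\lambda_2^{-1}\lesa d\lesa\lambda_2$ produces the required $(\lambda_2/\mu)^{1/10}$ high-low gain; note that the power on the min $(N,N_1)$ in \eqref{eqn:tri freq loc crit:d bound with V} rather than $N_{\min}$ reflects that the angular concentration can only be taken on $\phi_{\mu,N}$ or $\psi_{\lambda_1,N_1}$ when $\varphi_{\lambda_2}$ is at high modulation.

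The main obstacle, and the step I expect to require the most work, is \eqref{eqn:tri freq loc crit:d bound with Y}, where the right-hand side is measured in the auxiliary $Y^{\pm_2,M}_{\lambda_2}$ norm rather than $V^2_{\pm_2,M}$. Here the strategy follows the subcritical bound \eqref{eqn:proof of thm duhamel sub:Y bound fixed mod}: I would localise $\mc{C}^{\pm_2}_d\varphi_{\lambda_2}$ into caps of size $\beta=(d/\lambda_2)^{1/2}$, place $\phi_{\mu,N}$ and $\psi_{\lambda_1,N_1}$ into $L^{2r}_tL^{2r/(r-1)}_x$ via the wave Strichartz pair for some $r<a$, and then use the definition of $Y^{\pm_2,M}_{\lambda_2}$ to estimate $\|R_{\kappa'}\mc{C}^{\pm_2}_d\varphi_{\lambda_2}\|_{L^r_tL^{r/(r-1)}_x}$ by $(d/\lambda_2)^{-1/2-b}\lambda_2^{1/a-1}\|\varphi\|_{Y^{\pm_2,M}_{\lambda_2}}$ (after $L^p$-interpolation with the $L^4$ Strichartz bound). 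The constraint $\frac{5}{3}<a<2$ ensures the wave Strichartz square-sum bounds over caps and cubes can be closed, and the angular concentration bound again provides the $N_{\min}^\varrho$ factor. The exponent $\frac{1}{4}(\frac{1}{a}-\frac{1}{2})$ of the high-low ratio then falls out from the interplay between $\beta$, $\alpha$, and the Strichartz exponent, and the $\mb{A}^{\theta_0}$ gain is extracted via Lemma \ref{lem:tri L4 gain} applied to whichever factor(s) still have room after the Strichartz bound. The analogous estimates in the $\lambda_1\ll\lambda_2$ case follow by symmetry, swapping the roles of $\psi$ and $\varphi$.
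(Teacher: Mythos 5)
There is a genuine gap: your proposal does not address the truly resonant region, which is the fundamentally new difficulty in the critical case. You correctly observe that the global non-resonance bound \eqref{eq:global-nr} is no longer available, but you then proceed as if the modulation $d$ is still bounded below by $(\min\{\lambda_1,\lambda_2\})^{-1}$ (your proposed ranges of $d$ are $\mu^{-1}\lesa d\lesa\mu$ and $\lambda_2^{-1}\lesa d\lesa\lambda_2$). In the intended scope $M>0$ (the $M>\tfrac12$ in the statement is inconsistent with the proof's own remark that $M\leq\tfrac12$ ``is admissible now'' and with Theorem \ref{thm:duhamel-crit}, which assumes only $M>0$), the case $\mu\approx\lambda_1\approx\lambda_2$ with $\pm_1=+$, $\pm_2=-$ and $M<\tfrac12$ admits resonant interactions with $d$ arbitrarily small. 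There one cannot decompose in modulation and sum: the paper instead estimates the full low-modulation term $\int C_{\ll\mu^{-1}}\phi_{\mu,N}\,\overline{\mc{C}^+_{\ll\mu^{-1}}\psi}\,\mc{C}^-_{\ll\mu^{-1}}\varphi$ in one piece, using a Whitney-type cap/cube decomposition, the null-form bound, and the bilinear restriction estimate from Theorem \ref{thm:bilinear small scale KG} in $L^q_tL^r_x$ with $r$ close to $2$ and $q$ close to $1$, paired with the interpolated linear Strichartz bound \eqref{eqn:proof of thm trilinear crit:case 3 phi linear bound} for $\phi_{\mu,N}$; neither Theorem \ref{thm:cheap bilinear} nor Lemma \ref{lem:tri L4 gain} applies here. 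A separate null-structure argument is needed at $M=\tfrac12$ where the symbol degenerates more weakly. These steps cannot be obtained by ``repeating the subcritical case analysis with angular concentration inserted.''

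Where your proposal does reach the non-resonant cases, it also takes a substantially heavier route than the paper. You propose to re-run the full cap/cube/H\"older analysis of Theorem \ref{thm:trilinear freq loc subcrit}, substituting angular concentration \eqref{eqn:ang-con} for the fractional derivative gain $\mu^\varrho$. The paper avoids this entirely: it cites the frequency-localised estimates already proved in \cite{Candy2016} (equations (8.16), (8.18), (8.23), (8.25), (8.38), (8.39) there), which carry the $N_{\min}^\varrho$ and high-low factors but no $L^4$ gain, and then takes a convex combination with Lemma \ref{lem:tri L4 gain}, which gives an $\mb{A}^\theta$ gain at the cost of a negative power of $\min\{d,\lambda_1\}/\lambda_1$. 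Multiplying the two bounds raised to powers $1-\theta$ and $\theta$ trades a small fraction of the modulation decay for the $L^4$ factor. Your alternative of ``tensoring in H\"older'' to extract $\mb{A}^\theta$ (as in the subcritical \eqref{eqn:thm trilinear subcrit:case 1 A_0}) relies on the extra room provided by $s_0>0$ and is not available at $s_0=0$; at the critical exponent only the interpolation-with-Lemma-\ref{lem:tri L4 gain} route closes.

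Your treatment of \eqref{eqn:tri freq loc crit:d bound with Y} via the $Y^{\pm_2,M}_{\lambda_2}$ norm and $L^{2r}_tL^{2r/(r-1)}_x$ Strichartz is in the right spirit and essentially matches what the paper does in Case~2 for the $A_2$ term with $N_2=N_{\min}$, and the remark about why \eqref{eqn:tri freq loc crit:d bound with V} only achieves $(\min\{N,N_1\})^\varrho$ is correct. But without the resonant-region argument, the proof does not go through as stated.
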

        \begin{proof}
          As in the proof of Theorem \ref{thm:trilinear freq loc subcrit}, we begin by decomposing
          \begin{align*} \phi_{\mu} \big( \Pi_{\pm_1}& \psi_{\lambda_1}\big)^\dagger \gamma^0 \Pi_{\pm_2} \varphi_{\lambda_2} \\
                                                     &=\sum_{d} C_d \phi_{\mu} \big( \mc{C}_{\ll d}^{\pm_1} \psi_{\lambda_1}\big)^\dagger \gamma^0 \mc{C}_{\ll d}^{\pm_2} \varphi_{\lambda_2} + C_{\lesa d} \phi_{\mu} \big( \mc{C}_{ d}^{\pm_1} \psi_{\lambda_1}\big)^\dagger \gamma^0 \mc{C}_{\lesa d}^{\pm_2} \varphi_{\lambda_2}+C_{\lesa d} \phi_{\mu} \big( \mc{C}_{\lesa d}^{\pm_1} \psi_{\lambda_1}\big)^\dagger \gamma^0 \mc{C}_{ d }^{\pm_2} \varphi_{\lambda_2}\\
                                                     &=\sum_d A_0 +
                                                       A_1 + A_2.
          \end{align*}
          and consider seperately the small modulation cases
	$$ \mu \ll \lambda_1 \approx \lambda_2 \text{ and } d \lesa \mu, \qquad \qquad \mu  \gtrsim \min\{ \lambda_1, \lambda_2\} \text{ and } d \lesa \min\{\lambda_1, \lambda_2\} $$
        and the high modulation case
	$$ d \gg \min\{\mu, \lambda_1, \lambda_2\}$$

Note that the bound \eqref{eq:global-nr} does not hold in the case $M\leq \frac12$, which is admissible now.\\
	
        \textbf{Case 1: $\mu \ll \lambda_1 \approx \lambda_2$ and
          $d \lesa \mu$.}
From \cite[(8.16)]{Candy2016} we have the bound
        \[   \Big| \int A_0 dx dt \Big|  \lesa \Big( \frac{d}{\mu}\Big)^{\frac{1}{8} - \epsilon} \Big( \frac{\mu}{\lambda_1}\Big)^{-1} N_{min}^{\frac{1}{4}} \mu^{\frac{1}{2}} \| \phi_{\mu, N} \|_{V^2_{+, 1}} \| \psi_{\lambda_1, N_1} \|_{V^2_{\pm_1, M}} \| \varphi_{\lambda_2, N_2} \|_{V^2_{\pm_2, M}}. \]
        Combining this bound with Lemma \ref{lem:tri L4 gain}, choosing $\epsilon>0$ sufficiently small, and summing up over $d \lesa \mu$, then gives $\theta>0$ such that
            $$ \sum_{d \lesa \mu} \Big| \int_{\RR^{1+3}} A_0 dx dt \Big|
                        \lesa N_{min}^{\varrho} \Big( \frac{\mu}{\lambda_2} \Big)^{\frac{1}{4}} \mb{A}^{\theta}\Big( \mu^{\frac{1}{2}} \| \phi_{\mu, N} \|_{V^2_{+, 1}} \| \psi_{\lambda_1, N_1} \|_{V^2_{\pm_1, M}} \| \varphi_{\lambda_2, N_2} \|_{V^2_{\pm_2, M}}\Big)^{1-\theta}. $$
        We now turn to the bound for the $A_1$ term. From \cite[(8.18)]{Candy2016} we have for every $\epsilon>0$
        $$ \Big| \int A_1 dx dt \Big|  \lesa \Big( \frac{d}{\mu}\Big)^{\frac{1}{8} - \epsilon} \Big( \frac{\mu}{\lambda_1}\Big)^{-\frac{1}{2}} N_{min}^{\frac{1}{4}} \mu^{\frac{1}{2}} \| \phi_{\mu, N} \|_{V^2_{+, 1}} \| \psi_{\lambda_1, N_1} \|_{V^2_{\pm_1, M}} \| \varphi_{\lambda_2, N_2} \|_{V^2_{\pm_2, M}}.$$
    Again combining this bound with a small power of the estimate from Lemma \ref{lem:tri L4 gain}, we get an acceptable contribution for the $A_1$ term. The proof for the $A_2$ term is identical. \\

        \textbf{Case 2: $\mu \gtrsim \min\{\lambda_1, \lambda_2\}$ and
          $(\min\{\lambda_1, \lambda_2\})^{-1} \lesa d \lesa \min\{\lambda_1, \lambda_2\}$.} We
        may assume that $\lambda_1 \g \lambda_2$. From \cite[(8.23)]{Candy2016}, we have for
        every $\epsilon>0$
    $$ \Big| \int_{\RR^{1+3}} A_0 dx dt \Big| + \Big| \int_{\RR^{1+3}} A_1 dx dt \Big|\lesa \Big( \frac{d}{\lambda_2} \Big)^{\frac{1}{8}-\epsilon} N^{\frac{1}{4}}_{min} \mu^\frac{1}{2} \| \phi_{\mu, N} \|_{V^2_{+,1}} \| \psi_{\lambda_1, N_1} \|_{V^2_{\pm_1, M}} \| \varphi_{\lambda_2, N_2} \|_{V^2_{\pm_2, M}}. $$
     An application of Lemma \ref{lem:tri L4 gain}, summing up over $\lambda_2^{-1} \lesa d \lesa \lambda_2$, then gives an acceptable bound for both $A_0$ and $A_1$.

    It remains to bound the $A_2$ term.  From \cite[(8.25)]{Candy2016}
    we have
            $$ \Big| \int_{\RR^{1+3}} A_2 dx dt \Big| \lesa \Big( \frac{d}{\lambda_2} \Big)^{\frac{1}{4} - \epsilon} \Big( \frac{\lambda_2}{\mu}\Big)^{\frac{1}{4} - \epsilon}  \min\{ N, N_1\} \mu^\frac{1}{2} \| \phi_{\mu, N} \|_{V^2_{+, 1}} \| \psi_{\lambda_1, N_1} \|_{V^2_{\pm_1, M}} \| \varphi_{\lambda_2, N_2} \|_{V^2_{\pm_2, M}}. $$
    As previously, together with  with Lemma \ref{lem:tri L4 gain}, this is enough to deduce the required estimates.  The remaining case where $N_2=N_{min}$, requires the use of the $Y^{\pm_2, M}_{\lambda_2}$ norm. To this end, a similar argument to (\ref{eqn:thm trilinear subcrit:case 2 A2}), implies that if we let $\beta=(\frac{d}{\lambda_2})^\frac{1}{2}$, then for every $\epsilon>0$ by using the angular concentration bound on $\varphi_{\lambda_2, N_2}$  we have
              \begin{align*}
              \Big| &\int_{\RR^{1+3}} A_2 dx dt \Big|  \\
                    &\lesa \sum_{\substack{q, q''\in Q_{\lambda_2}\\ |q-q''| \approx \lambda_2}} \sum_{\substack{\kappa, \kappa' \kappa'' \in \mc{C}_{\beta} \\ |\pm_1\kappa - \kappa''| \lesa \beta, \\ |\pm_1\kappa - \pm_2 \kappa'| \lesa \beta}}  \beta\|  P_{q''} R_{\kappa''} C^+_{\lesa d} \phi_{\mu, N} \|_{L^{\frac{2a}{a-1}}_t L^{2a}_x} \| R_{\kappa} P_{q}\mc{C}_{\lesa d}^{\pm_1} \psi_{\lambda_1, N_1} \|_{L^\frac{2a}{a-1}_t L^{2a}_x} \| R_{\kappa'} \mc{C}_d^{\pm_2} \varphi_{\lambda_2, N_2} \|_{L^a_t L^{\frac{a}{a-1}}_x} \\
                    &\lesa \Big( \frac{d}{\lambda_2}\Big)^{\frac{\varrho}{2} - b - \epsilon} \Big( \frac{\lambda_2}{\mu} \Big)^{\frac{1}{2a} - \frac{1}{4} - \epsilon} N_2^\varrho  \mu^{\frac{1}{2}} \| \phi_{\mu, N} \|_{V^2_{+, 1}} \| \psi_{\lambda_1, N_1} \|_{V^2_{\pm_1, M}}  \| H_{N_2} \varphi \|_{Y^{\pm_2, M}_{\lambda_2}}.
            \end{align*}
            Therefore, since $\frac{1}{a}=\frac{1}{2} + \frac{\varrho}{16}$, provided we choose $\epsilon>0$ sufficiently small, by combining the above estimate with Lemma \ref{lem:tri L4 gain} we get the claimed bound for the $A_2$ component. \\

            \textbf{Case 3: $\mu \gtrsim \min\{\lambda_1, \lambda_2\}$
              and $d \ll (\min\{\lambda_1, \lambda_2\})^{-1}$.} We now
            turn our attention to the \emph{resonant} region where we
            have no longer have a lower bound on $d$. It is worth noting that in this region the proof deviates somewhat from the argument in \cite{Candy2016}, as here we obtain an improvement in the amount of angular regularity required.

If $\mu \gg \min\{\lambda_1, \lambda_2\}$, \cite[Lemma 8.7]{Candy2016} implies that
\begin{equation}\label{eq:lower-bound-m}
\mathfrak{M}_{\pm_1,\pm_2}\gtrsim (\min\{\lambda_1, \lambda_2\})^{-1}
\end{equation}
which is ruled out in Case 3, hence from now on we may assume $\mu \approx \lambda_1 \approx \lambda_2$. If $\pm_1=\pm_2$, or $(\pm_1,\pm_2)=(-,+)$, or $M>\frac12$
\cite[Lemma 8.7]{Candy2016}  implies \eqref{eq:lower-bound-m} again, so that it remains to consider $(\pm_1,\pm_1)= (+,-)$ and we are either in the weakly resonant regime $M=\frac{1}{2}$, or the strongly resonant case $0<M<\frac{1}{2}$. In order to treat this case, we use
\begin{equation}\label{eq:res-bound-pm}
\begin{split}
\mathfrak{M}_{+, -}(\xi, \eta) \approx&  \frac{1}{\lr{\xi} + \lr{\eta}} \bigg| M^2 \frac{ (|\xi| - |\eta|)^2}{\lr{\xi}_{M} \lr{\eta}_{M} + |\xi| |\eta| + M^2}  + |\xi| |\eta| + \xi \cdot \eta + \frac{ 4 M^2 - 1}{2} \bigg|\\
\mathfrak{M}_{+, -}(\xi, \eta) \gtrsim & \frac{1}{\lr{\eta}} \bigg| \frac{ ( |\xi| - M |\xi - \eta|)^2}{\lr{\xi}_{M} \lr{\xi - \eta} + |\xi | |\xi - \eta| + M}  + |\xi| |\xi - \eta| - \xi \cdot (\xi - \eta)  + \frac{ 2M - 1 }{2} \bigg|
\end{split}
\end{equation}
 from \cite[Lemma 8.7]{Candy2016}.
We start by considering the case $M=\frac{1}{2}$. The key
            observation (originally made in \cite{Candy2016}) is that
            the null structure now acts at \emph{all} scales
            $0<d<\min\{\lambda_1, \lambda_2\}$. More precisely, \eqref{eq:res-bound-pm} and \eqref{eqn:nullsymbolbound} imply that for $|\xi|\approx|\eta|\approx \mu$
\[
 |\Pi_+(\xi)\gamma^0\Pi_-(\eta)|^2\lesa  \Big| \frac{||\xi|-|\eta||}{\mu^2}+ \theta(\xi,-\eta)\Big|^2\lesa \mu^{-1} \mathfrak{M}_{+, -}(\xi, \eta),
\]
which we exploit via \eqref{eqn:nullformbound2}.
In particular, letting $\beta = (\frac{d}{\mu})^{\frac{1}{2}}$, using \eqref{eq:res-bound-pm} and the $L^4_{t,x}$ Strichartz bound in Lemma \ref{lem:wave strichartz} we obtain
                \begin{align*}
                  \Big| \int_{\RR^{1+3}} A_0 &dx \,dt\Big| \\
                  &\lesa \beta \sum_{\substack{\kappa, \kappa', \kappa'' \in \mc{C}_{\beta} \\ |\kappa + \kappa'|, |\kappa - \kappa''| \lesa \beta}} \sum_{\substack{q, q'\in \mc{Q}_{\mu^2 \beta}  \\ |q-q'| \lesa \mu^2 \beta}} \| R_{\kappa''}  C_d \phi_{\mu, N} \|_{L^2_{t,x}} \| R_{\kappa} P_q \psi_{\lambda_1, N_1} \|_{L^4_{t,x}} \| R_{\kappa'} P_{q'} \varphi_{\lambda_2, N_2} \|_{L^4_{t,x}} \\
                  &\lesa (\beta \mu)^{\frac{1}{4}} \mu^{\frac{1}{2}} \| \phi_{\mu, N}\|_{V^2_{+, 1}}  \| \psi_{\lambda_1, N_1}\|_{V^2_{\pm_1, M}} \| \varphi_{\lambda_2, N_2} \|_{V^2_{\pm_2, M}}.
                \end{align*}
            Together with Lemma \ref{lem:tri L4 gain}, and as the sum over modulation $0<d \lesa \mu^{-1}$ is bounded, this gives control over the trilinear product when $M=\frac{1}{2}$. The arguments for the $A_1$ and $A_2$ terms are essentially identical.

            It remains to consider the fully resonant case $0<M<\frac{1}{2}$. In this regime the null structure no longer gives any gain at modulation scales $d\ll \mu^{-1}$. Consequently if we followed the argument used in the $M=\frac{1}{2}$ case, we would not be able to sum up over modulation scales. Instead, our goal will be to simply estimate the remaining trilinear term
                    $$ \int_{\RR^{1+3}} C_{\ll \mu^{-1}} \phi_{\mu, N} \overline{\mc{C}_{\ll \mu^{-1}}^{+} \psi_{\lambda_1, N_1}} \mc{C}_{\ll \mu^{-1}}^{-} \varphi_{\lambda_2, N_2} dx dt $$
            directly. The key observation, which was exploited in \cite{Candy2016}, is that in this trilinear interaction the three waves are already transverse, and thus we can apply the bilinear restriction estimates contained in Theorem \ref{thm:bilinear small scale KG}. The argument is as follows. We first observe that, by Lemma \ref{lem:wave strichartz} and $L^p$ interpolation, for every $\frac{10}{3}<\frac{1}{r} < \frac{5}{14}$ there exists $\theta>0$ such that
                    $$ \| \phi_{\mu, N} \|_{L^r_{t,x}} \lesa \mu^{\frac{3}{14}} N^{\frac{2}{5}} \| \phi_{\mu, N} \|_{L^4_{t,x}}^\theta \|\phi_{\mu, N} \|_{V^2_{+, 1}}^{1-\theta}.$$
            Interpolating with the trivial $L^\infty_t L^2_x$ estimate, we conclude that for all $\frac{3}{2}(\frac{1}{2}-\frac{1}{r})<\frac{1}{q}<\frac{5}{2}(\frac{1}{2} - \frac{1}{r})$ and sufficiently small $\theta>0$ the bound
                     \begin{equation}\label{eqn:proof of thm trilinear crit:case 3 phi linear bound}\| \phi_{\mu, N} \|_{L^q_t L^r_x} \lesa \mu^{\frac{3}{7q} + \frac{3}{7}(\frac{1}{2} - \frac{1}{r})} N^{\frac{4}{5q} + \frac{4}{5}(\frac{1}{2}-\frac{1}{r})} \| \phi_{\mu, N} \|_{L^4_{t,x}}^\theta \|\phi_{\mu, N} \|_{V^2_{+, 1}}^{1-\theta}.  \end{equation}
           On the other hand, by interpolating Theorem \ref{thm:bilinear small scale KG} with H\"older's inequality and exploiting null structure, we have for $1 \les q,r < 2$, $\frac{1}{q} + \frac{2}{r}<2$, and all sufficiently small $\theta>0$
                \begin{align*} \sum_{\kappa, \kappa'\in \mc{C}_{\mu^{-1}}} \sum_{\substack{q, q' \in \mc{Q}_{\mu} \\ \frac{|q-q'|}{\mu^2} + |\kappa +\kappa'| \approx \mu^{-1}}}& \big\| \overline{ R_{\kappa} P_q \mc{C}_{\ll \mu^{-1}}^{+}\psi_{\lambda_1, N_1}} R_{\kappa'} P_{q'} \mc{C}_{\ll \mu^{-1}}^{-}\varphi_{\lambda_2, N_2} \big\|_{L^q_t L^r_x}\\
                    &\lesa \mu^{\frac{1}{q}-\frac{1}{r} + 6\theta} \Big( \lambda_1^{-\frac{1}{2}} \|\psi_{\lambda_1, N_1} \|_{L^4_{t,x}} \lambda_2^{-\frac{1}{2}} \| \varphi_{\lambda_2, N_2} \|_{L^4_{t,x}}\Big)^{\theta} \Big( \| \psi_{\lambda_1, N_1} \|_{V^2_{\pm_1, M}} \| \varphi_{\lambda_2, N_2} \|_{V^2_{\pm_2, M}} \Big)^{1-\theta}.
                \end{align*}
           Therefore, if we let $\frac{1}{r} = \frac{1}{2} +\epsilon$ and $\frac{1}{q} = 1 - \frac{9}{4}\epsilon$, then the above estimates, together with the orthogonality implied by \eqref{eq:res-bound-pm}, give
            \begin{align*}
              \Big| \int_{\RR^{1+3}} &C_{\ll \mu^{-1}} \phi_{\mu, N} \overline{  \mc{C}_{\ll \mu^{-1}}^{+}\psi_{\lambda_1, N_1}} \mc{C}_{\ll \mu^{-1}}^{-}\varphi_{\lambda_2, N_2} dx dt \Big| \\
                &\lesa \sum_{\kappa, \kappa'\in \mc{C}_{\mu^{-1}}} \sum_{\substack{q, q' \in \mc{Q}_{\mu} \\ \frac{|q-q'|}{\mu^2} + |\kappa + \kappa'| \approx \mu^{-1}}} \| \phi_{\mu, N} \|_{L^{q'}_t L^{r'}_x} \big\| \overline{ R_{\kappa} P_q \mc{C}_{\ll \mu^{-1}}^{+}\psi_{\lambda_1, N_1}} R_{\kappa'} P_{q'} \mc{C}_{\ll \mu^{-1}}^{-}\varphi_{\lambda_2, N_2} \big\|_{L^q_t L^r_x}\\
                &\lesa \mu^{7\theta - \frac{13}{4} \epsilon} N^{\frac{13}{5}\epsilon} \mb{A}^\theta \Big( \mu^{\frac{1}{2}} \| \phi_{\mu, N} \|_{V^2_{+, 1}} \| \psi_{\lambda_1, N_1} \|_{V^2_{\pm_1, M}} \| \varphi_{\lambda_2, N_2} \|_{V^2_{\pm_2, M}} \Big)^{1-\theta}.
            \end{align*}
            Choosing $\epsilon$ and $\theta$ sufficiently small, we obtain the required bound in the case $N_{min} = N$. The remaining cases $N_1=N_{min}$ and $N_2=N_{min}$ are similar, the only change is to use (\ref{eqn:proof of thm trilinear crit:case 3 phi linear bound}) on the term with smallest angular frequency, and control the remaining pair using the bilinear restriction estimate in Theorem \ref{thm:bilinear small scale KG}. \\

            \textbf{Case 4: $d \gg \min\{\mu, \lambda_1, \lambda_2\} $.} We start by estimating the $A_0$ component. As in the subcritical case, nontrivial contributions require $\mathfrak{M}_{\pm_1, \pm_2} \approx d$. From the definition of $\mathfrak{M}_{\pm_1,\pm_2}$ we see that either $\mathfrak{M}_{\pm_1, \pm_2} \lesa \min\{\mu, \lambda_1, \lambda_2\}$ or $\mathfrak{M}_{\pm_1, \pm_2} \approx \max\{\mu, \lambda_1, \lambda_2\}$. In conclusion, we must have $d \approx \max\{\mu, \lambda_1, \lambda_2\}$. If $\mu \lesa \lambda_1 \approx \lambda_2$, an application of Theorem \ref{thm:cheap bilinear} gives
                \begin{align*}
                  \Big| \int_{\RR^{1+3}} A_0 dx dt \Big| &\les \| C_d \phi_{\mu, N} \|_{L^2_{t,x}} \| \overline{\mc{C}_{\ll d}^{\pm_1}\psi_{\lambda_1, N_1}} \mc{C}_{\ll d}^{\pm_2}\varphi_{\lambda_2, N_2} \|_{L^2_{t,x}} \\
                  &\lesa \Big( \frac{d}{\lambda_2} \Big)^{-\frac{1}{2}} \Big( \frac{\mu}{\lambda_2} \Big)^{\frac{1}{2}-\epsilon} \mu^\frac{1}{2}  \| \phi_{\mu, N} \|_{V^2_{+, 1}} \| \psi_{\lambda_1, N_1} \|_{V^2_{\pm_1, m_1}} \| \varphi_{\lambda_2, N_2} \|_{V^2_{\pm_2, m_2}}.
                \end{align*}
            On the other hand, if $\mu \gg \min\{ \lambda_1, \lambda_2\}$ (thus $\mu$ is essentially the largest frequency), we simply have
                \begin{align*}
                  \Big| \int_{\RR^{1+3}} A_0 dx dt \Big| &\les \| C_d \phi_{\mu, N} \|_{L^2_{t,x}} \|\mc{C}_{\ll d}^{\pm_1} \psi_{\lambda_1, N_1}\|_{L^4_{t,x}} \| \mc{C}_{\ll d}^{\pm_2}\varphi_{\lambda_2, N_2} \|_{L^4_{t,x}} \\
                  &\lesa \Big( \frac{d}{\mu} \Big)^{-\frac{1}{2}} \Big( \frac{\min\{\lambda_1, \lambda_2\}}{\mu} \Big)^{\frac{1}{2}} \mu^\frac{1}{2}  \| \phi_{\mu, N} \|_{V^2_{+, 1}} \| \psi_{\lambda_1, N_1} \|_{V^2_{\pm_1, m_1}} \| \varphi_{\lambda_2, N_2} \|_{V^2_{\pm_2, m_2}}.
                \end{align*}
            Thus in either case we have a high-low gain, and consequently applying Lemma \ref{lem:tri L4 gain} to gain $L^4_{t,x}$ norms, and summing up over $d \approx \max\{\mu, \lambda_1, \lambda_2\}$ we obtain the required bound for the $A_0$ component.

            To estimate the $A_1$ component, as in the subcritical case, we consider separately the cases $\min\{\mu, \lambda_1, \lambda_2\} \ll d \ll \max\{\mu, \lambda_1, \lambda_2\}$ and $d \gtrsim \max\{\mu, \lambda_1, \lambda_2\}$. In the latter case, we only require H\"{o}lder's inequality together with the (refined) $L^4_{t,x}$ Strichartz estimate. In particular, decomposing into cubes of size $\lambda_{min} = \min\{ \mu, \lambda_1, \lambda_2\}$, we obtain
                \begin{align*}
                  \Big| \int_{\RR^{1+3}} A_1 dx dt \Big|
                        &\lesa \sum_{\substack{q, q', q'' \in \mc{Q}_{\lambda_{min}} \\ |q - q' + q''| \lesa \lambda_{min}}}
                                \| P_{q''} \phi_{\mu, N} \|_{L^4_{t,x}} \| P_q \mc{C}^{\pm_1}_d \psi_{\lambda_1, N_1} \|_{L^2_{t,x}} \| P_{q'} \varphi_{\lambda_2, N_2} \|_{L^4_{t,x}} \\
                        &\lesa \Big( \frac{d}{\lambda_2}\Big)^{-\frac{1}{2}} \Big( \frac{\lambda_{min}}{\lambda_2}\Big)^{\frac{1}{4}-\epsilon} \Big( \frac{\lambda_{min}}{\mu}\Big)^{\frac{1}{4}-\epsilon}   \mu^\frac{1}{2} \| \phi_{\mu, N} \|_{V^2_{+,1}} \| \psi_{\lambda_1, N_1} \|_{V^2_{\pm_1, M}} \| \varphi_{\lambda_2, N_1}\|_{V^2_{\pm_2, M}}.
                \end{align*}
            Again applying Lemma \ref{lem:tri L4 gain} and summing up over $d \gtrsim \max\{\mu, \lambda_1, \lambda_2\}$ controls the $A_1$ term.

            We now consider the region $\min\{\mu, \lambda_1, \lambda_2\} \ll d \ll \max\{\mu, \lambda_1, \lambda_2\}$. Here we can simply observe that the bounds in the subcritical case, namely \eqref{eqn:proof of thm tri sub:c3 A1 med mod} and \eqref{eqn:proof of thm tri sub:c4 A1 restricted d}, imply that we have
                $$\Big| \int_{\RR^{1+3}}A_1 dx dt \Big| \lesa \Big( \frac{d}{\min\{\mu, \lambda_1, \lambda_2\}}\Big)^{-\frac{1}{2}} \Big( \frac{\min\{\mu, \lambda_1, \lambda_2\}}{\max\{\mu, \lambda_1, \lambda_2\}}\Big)^{\frac{1}{4}}  \mu^\frac{1}{2} \| \phi_{\mu, N} \|_{V^2_{+,1}} \| \psi_{\lambda_1, N_1} \|_{V^2_{\pm_1, M}} \| \varphi_{\lambda_2, N_1}\|_{V^2_{\pm_2, M}}.$$
            Again applying Lemma \ref{lem:tri L4 gain} and summing up over $\min\{\mu, \lambda_1, \lambda_2\} \ll d \ll \max\{\mu, \lambda_1, \lambda_2\}$, we deduce the required bound for the $A_1$ term. An identical argument bounds the $A_2$ term.
          \end{proof}

          \subsection{Proof of Theorem \ref{thm:duhamel-crit}}\label{subsec:proof-duhamel-crit}
Similarly to Subsection \ref{subsec:proof-duhamel-sub}, the first step is to obtain frequency localised versions of the required bounds. Let $\lambda_{min}= \min\{\mu, \lambda_1, \lambda_2\}$, $\lambda_{max}=\max\{\mu, \lambda_1, \lambda_2\}$. Our aim is to show that, if $\varrho>0$ is sufficiently small, $\frac{1}{a}=\frac{1}{2} + \frac{\varrho}{16}$, and $b= 2 (\frac{1}{a}-\frac{1}{2})$, then
there exists $0<\theta_1<\frac{1}{4}$ such that for all $0\les \theta \les \theta_1$ we have for the Dirac Duhamel term, the bounds
\begin{equation}\label{eqn:proof of thm duhamel crit:psi bd V2}\begin{split}
    \big\|  & P_{\lambda_1} H_{N_1} \Pi_{\pm_1}\mc{I}^{\pm_1,M}\big( \phi_{\mu, N} \gamma^0 \Pi_{\pm_2} \varphi_{\lambda_2, N_2} \big) \big\|_{V^2_{\pm_1, M}} \\
        &\lesa  (\min\{N, N_2\})^{\varrho} \Big( \frac{\lambda_{min}}{\lambda_{max}}\Big)^{\frac{\varrho}{100}} \Big( \|
        \phi_{\mu, N}\|_{L^4_{t,x}} \lambda_2^{-\frac{1}{2}}\| \varphi_{\lambda_2, N_2} \|_{L^4_{t,x}} \Big)^{\theta} \Big(  \mu^{\frac{1}{2}} \|
        \phi_{\mu, N} \|_{V^2_{+,1}} \| \varphi_{N_2} \|_{F^{\pm_2,M}_{\lambda_2}} \Big)^{1-\theta}
  \end{split}
\end{equation}
and
    \begin{equation}\label{eqn:proof of thm duhamel crit:psi bd Y}
        \begin{split}
          \big\|  &H_{N_1} \Pi_{\pm_1}  \mc{I}^{\pm_1,M}\big( \phi_{\mu, N} \gamma^0 \Pi_{\pm_2} \varphi_{\lambda_2, N_2} \big) \big\|_{Y^{\pm_1, M}_{\lambda_1}} \\
    &\lesa (\min\{N, N_2\})^{\varrho} \Big( \frac{\lambda_{min}}{\lambda_{max}}\Big)^{\frac{\varrho}{100}}  \Big( \|
    \phi_{\mu, N}\|_{L^4_{t,x}} \lambda_2^{-\frac{1}{2}}\| \varphi_{\lambda_2, N_2} \|_{L^4_{t,x}} \Big)^{\theta} \Big(  \mu^{\frac{1}{2}} \|
    \phi_{\mu, N} \|_{V^2_{+,1}} \| \varphi_{\lambda_2, N_2} \|_{V^2_{\pm_2,
        M}} \Big)^{1-\theta},
        \end{split}
    \end{equation}
while for the wave Duhamel term, we have
\begin{equation}\label{eqn:proof of thm duhamel crit:phi bd V2}\begin{split}
    &\mu^{-\frac{1}{2}} \big\|  P_{\mu} H_N \mc{I}^{+,1}\big( \overline{\Pi_{\pm_1} \psi_{\lambda_1, N_1}} \Pi_{\pm_2}\varphi_{\lambda_2, N_2}\big) \big\|_{V^2_{+,1}} \\
    &\lesa  (\min\{N_1, N_2\})^{\varrho} \Big(
    \frac{\lambda_{min}}{\lambda_{max}}\Big)^{\frac{\varrho}{100}} \Big( \lambda_1^{-\frac{1}{2}} \|
    \psi_{\lambda_1, N_1} \|_{L^4_{t,x}} \lambda_2^{-\frac{1}{2}} \| \varphi_{\lambda_2, N_2} \|_{L^4_{t,x}} \Big)^{\theta} \Big(  \| \psi_{N_1}
    \|_{F^{\pm_1, M}_{\lambda_1}} \| \varphi_{N_2} \|_{F^{\pm_2,
        M}_{\lambda_2}} \Big)^{1-\theta}.
  \end{split}
\end{equation}
Assuming the bounds \eqref{eqn:proof of thm duhamel crit:psi bd V2}, \eqref{eqn:proof of thm duhamel crit:psi bd Y}, and \eqref{eqn:proof of thm duhamel crit:phi bd V2} for the moment, the estimates in Theorem \ref{thm:duhamel-crit} are a consequence of a straightforward summation argument. Fix $\sigma>0$.  As in the subcritical case, it is enough to consider the case $s=0$ by using that, due to the convolution constraint, we always have $\lambda_1^s \lesa  (\max\{ \mu, \lambda_2 \})^{s}$.
Summing up \eqref{eqn:proof of thm duhamel crit:psi bd V2} with $\varrho = \frac{\sigma}{2}$ over angular frequencies $N_1$ gives $0<\theta_0<\frac{1}{2}$ such that for all $0<\theta<\theta_0$ we have
\begin{equation}\label{eqn:proof of thm duhamel crit:ang summed up}
    \begin{split}
       \Big( \sum_{N_1 \in 2^{\NN}} N_1^{2\sigma} \big\|  P_{\lambda_1} H_{N_1} &\Pi_{\pm_1}\mc{I}^{\pm_1}_{M}\big( \phi_{\mu} \gamma^0 \Pi_{\pm_2} \varphi_{\lambda_2} \big) \big\|_{V^2_{\pm_1, M}}^2 \Big)^\frac{1}{2} \\
        &\lesa  \Big( \frac{\lambda_{min}}{\lambda_{max}}\Big)^{\frac{\sigma}{200}} \Big( \|
        \phi_{\mu}\|_{\mb{D}^{0,\sigma}} \| \varphi_{\lambda_2} \|_{\mb{D}^{-\frac12,\sigma}} \Big)^{\theta} \\
        &\qquad \qquad \cdot \Big[  \mu^{\frac{1}{2}} \Big( \sum_{N \in 2^\NN} N^{2\sigma} \|
        \phi_{\mu, N} \|_{V^2_{+,1}}^2 \Big)^\frac{1}{2} \Big( \sum_{N_2 \in 2^\NN} N_2^{2\sigma} \| \varphi_{N_2} \|_{F^{\pm_2,M}_{\lambda_2}}^2\Big)^\frac{1}{2} \Big]^{1-\theta}.
    \end{split}
\end{equation}
Note that for $\frac{1}{q} + \frac{1}{r} = \frac{1}{2}$ and $\epsilon>0$ we have the elementary inequality
\begin{align*}
& \bigg\| \bigg( \sum_{\substack{ \mu, \lambda_2 \in 2^{\NN} \\ \mu +\lambda_2 \approx \lambda_1}} \Big( \frac{\min\{\mu, \lambda_2\}}{\lambda_1}\Big)^{\epsilon} a_{\mu} b_{\lambda_2}\bigg)_{\lambda_1 \in 2^{\NN}} \bigg\|_{\ell^2}
            + \bigg\| \bigg( \sum_{\substack{ \mu, \lambda_2 \in 2^{\NN} \\ \mu \approx \lambda_2 \gtrsim \lambda_1}} \Big( \frac{\lambda_1}{\lambda_2}\Big)^{\epsilon} a_{\mu} b_{\lambda_2}\bigg)_{\lambda_1 \in 2^{\NN}} \bigg\|_{\ell^2}
            \\\lesa{}&  \| (a_{\mu})_{\mu \in 2^{\NN}} \|_{\ell^q} \| (b_{\lambda_2})_{\lambda_2 \in 2^{\NN} }\|_{\ell^r}.
\end{align*}
Thus summing up (\ref{eqn:proof of thm duhamel crit:ang summed up}) over spatial frequencies, and assuming that $0<\theta<\frac{1}{4}$, we deduce the bound \eqref{eqn:thm duhamel crit:V psi}.  An identical argument using (\ref{eqn:proof of thm duhamel crit:psi bd Y}) gives the $\mb{Y}^s_{\pm_1, M}$ bound (\ref{eqn:thm duhamel crit:Y psi}). Similarly the bound for $\phi$ follows from (\ref{eqn:proof of thm duhamel crit:phi bd V2}).

We now turn to the proof of the estimates \eqref{eqn:proof of thm duhamel crit:psi bd V2}, \eqref{eqn:proof of thm duhamel crit:psi bd Y}, and \eqref{eqn:proof of thm duhamel crit:phi bd V2}. It is enough to consider the case $\theta=\theta_1$, as the $L^4_{t,x}$ terms are dominated by the corresponding $V^2$ norms.  The bounds (\ref{eqn:proof of thm duhamel crit:psi bd V2}) and (\ref{eqn:proof of thm duhamel crit:phi bd V2}) follow directly from Theorem \ref{thm:tri freq loc crit} together with \eqref{eqn:energy ineq for V2}. On the other hand, the argument used to obtain (\ref{eqn:proof of thm duhamel crit:psi bd Y}) is slightly more involved. We first note that, from \cite[(8.38)]{Candy2016} and \cite[(8.39)]{Candy2016}, we have the bound
    \begin{equation}\label{eqn:proof of thm duhamel crit:old}
    \begin{split}
      d^{\frac{2}{3}} \Big( \frac{\min\{d,\lambda_1\}}{\lambda_1}\Big)^{1-\frac{2}{3}} \big\| P_{\lambda_1} H_{N_1} &\mc{C}_d^{\pm_1}\mc{I}^{\pm_1,M}\big( \phi_{\mu, N} \gamma^0 \Pi_{\pm_2} \varphi_{\lambda_2, N_2}\big) \big\|_{L^\frac{3}{2}_t L^2_x}\\
            &\lesa \min\{N, N_2\} \Big( \frac{\lambda_{max}}{\lambda_{min}}\Big)^{\frac{1}{3}} \mu^{\frac{1}{2}} \| \phi_{\mu, N} \|_{V^2_{+, 1}} \| \psi_{\lambda_1, N_1} \|_{V^2_{\pm_2, M}}.
    \end{split}
    \end{equation}
On the other hand, if $\mu \lesa \lambda_2$, then an application of \eqref{eqn:tri freq loc crit V2:main}, gives
\begin{align}
  d^{\frac{1}{2}} \big\| P_{\lambda_1} H_{N_1} &\mc{C}_d^{\pm_1}\mc{I}^{\pm_1,M}\big( \phi_{\mu, N} \gamma^0 \Pi_{\pm_2} \varphi_{\lambda_2, N_2}\big) \big\|_{L^2_{t,x}} \notag \\
                                                           &\lesa \big\| P_{\lambda_1} H_{N_1} \mc{C}_d^{\pm_1}\mc{I}^{\pm_1,M}\big( \phi_{\mu, N} \gamma^0 \Pi_{\pm_2} \varphi_{\lambda_2, N_2}\big) \big\|_{V^2_{\pm_1, M}} \notag \\
                                                           &\lesa (\min\{N, N_2\})^{\frac{\varrho}{2}} \Big(
                                                             \frac{\mu}{\lambda_2}\Big)^{\frac{1}{10}} \Big( \| \phi_\mu\|_{L^4_{t,x}} \lambda_2^{-\frac{1}{2}} \| \varphi_{\lambda_2} \|_{L^4_{t,x}} \Big)^{\theta_0}
                                                             \Big( \mu^{\frac{1}{2}} \| \phi_\mu \|_{V^2_{+,1}} \| \varphi_{\lambda_2} \|_{V^2_{\pm_2, M}}
                                                             \Big)^{1-\theta_0}.\label{eqn-proof of F control crit-L2 bound1}
\end{align}
Hence (\ref{eqn:proof of thm duhamel crit:psi bd Y}) in the region $\mu \lesa \lambda_2$ follows by interpolating between \eqref{eqn:proof of thm duhamel crit:old} and (\ref{eqn-proof of F control crit-L2 bound1}) and using the conditions $\frac{1}{a} = \frac{1}{2} + \frac{\varrho}{16}$ and $b=2(\frac{1}{a}-\frac{1}{2})$. The case $\mu \gg \lambda_2$ and $N \les N_2$, follows from a similar argument using \eqref{eqn:tri freq loc crit V2:main-A0} and \eqref{eqn:tri freq loc crit:d bound with V}.

It remains to consider the case $\mu \gg \lambda_2$ and $N_2 \les N$. For this frequency interaction, Theorem \ref{thm:tri freq loc crit} requires a $Y^{\pm, M}_{\lambda_2}$ norm on the righthand side. Thus, as our goal is to obtain a bound only using the $V^2_{\pm, M}$ norms, we have to work a little harder. We start by writing the product as
     \begin{equation}\label{eqn-proof of F control crit-Lp bound wave high}
     \begin{split}
     \phi_{\mu, N} \gamma^0 \Pi_{\pm_2} \varphi_{\lambda_2, N_2}
        = \Big(\phi_{\mu, N} \gamma^0 \Pi_{\pm_2} \varphi_{\lambda_2, N_2} - \sum_{d' \lesa \lambda_2} &C_{\les d'}^{\pm_1, M} \big( C_{\les d'} \phi_{\mu, N} \gamma^0 \mc{C}^{\pm_2}_{d'} \varphi_{\lambda_2, N_2} \big) \Big)\\
        &+ \sum_{d' \lesa \lambda_2} C_{\les d'}^{\pm_1, M} \big( C_{\les d'} \phi_{\mu, N} \gamma^0 \mc{C}^{\pm_2}_{d'} \varphi_{\lambda_2, N_2} \big).
     \end{split}
     \end{equation}
The first term can be bounded by adapting the argument used in the previous  cases as here \eqref{eqn:tri freq loc crit V2:main-A0} in  Theorem \ref{thm:tri freq loc crit} gives a bound without using the $Y^{\pm, M}_{\lambda_2} $ norm. More precisely, letting $\beta = (\frac{d'}{\lambda_2})^\frac{1}{2}$ and exploiting null structure together with the now familiar modulation bounds, we have
 \begin{align*}
     d^{\frac{2}{3}} \Big( \frac{d}{\lambda_1} \Big)^{1-\frac{2}{3}} \Big\| P_{\lambda_1} H_{N_1}& \mc{C}^{\pm_1}_d \mc{I}^{\pm_1, M}\Big( \sum_{d' \lesa \lambda_2} C_{\les d'}^{\pm_1, M} \big( C_{\les d'} \phi_{\mu, N} \gamma^0 \mc{C}^{\pm_2}_{d'} \varphi_{\lambda_2, N_2} \big)\Big) \Big\|_{L^\frac{3}{2}_t L^2_x}\\
                                                                          &\lesa  \mu^{-\frac{1}{3}} \sum_{d'\lesa \lambda_2}  \Big\| \Big( \sum_{\substack{\kappa, \kappa', \kappa''  \in \mc{C}_{\beta}\\|\pm_1 \kappa - \pm_2 \kappa'|, |\kappa - \kappa''|\lesa \beta} }\big\| R_{\kappa} \big(C_{\les d'}R_{\kappa''}\phi_{\mu, N} \gamma^0 R_{\kappa'} \mc{C}^{\pm_2}_{d'} \varphi_{\lambda_2, N_2}\big) \big\|_{L^2_x}^2\Big)^\frac{1}{2}
                                                                          \Big\|_{L^\frac{3}{2}_t} \notag \\
                                                                          &\lesa \mu^{-\frac{1}{3}} \sum_{d' \lesa \lambda_2} \beta \Big( \sum_{\kappa'' \in \mc{C}_\beta} \|  C_{\les d'} R_{\kappa''}\phi_{\mu, N} \|_{L^4_{t,x}}^2\Big)^\frac{1}{2} \| \varphi_{\lambda_2, N_2} \|_{L^\frac{12}{5}_t L^4_x}  \notag \\
                                                                          &\lesa N_2 \Big( \frac{\lambda_2}{\mu} \Big)^{\frac{1}{3}} \mu^{\frac{1}{2}} \| \phi_{\mu, N} \|_{V^2_{+, 1}} \| \varphi_{\lambda_2, N_2} \|_{V^2_{\pm_2, M}}.
    \end{align*}
 Together with \eqref{eqn:proof of thm duhamel crit:old}, we deduce that
    \begin{align*}  d^{\frac{2}{3}} \Big( \frac{\min\{d,\lambda_1\}}{\lambda_1} \Big)^{1-\frac{2}{3}} \Big\| P_{\lambda_1} H_{N_1} \mc{C}^{\pm_1}_d \mc{I}^{\pm_1, M}\Big( \phi_{\mu, N} \gamma^0  \varphi_{\lambda_2, N_2}-& \sum_{d' \lesa \lambda_2} C_{\les d'}^{\pm_1, M} \big( C_{\les d'} \phi_{\mu, N} \gamma^0 \mc{C}^{\pm_2}_{d'} \varphi_{\lambda_2, N_2} \big)\Big) \Big\|_{L^\frac{3}{2}_t L^2_x}\\
            &\lesa N \Big( \frac{\mu}{\lambda_2} \Big)^{\frac{1}{3}} \mu^{\frac{1}{2}} \| \phi_{\mu, N} \|_{V^2_{+, 1}} \| \varphi_{\lambda_2, N_2} \|_{V^2_{\pm_2, M}}.
    \end{align*}
 Applying $L^p$ interpolation together with \eqref{eqn:tri freq loc crit V2:main-A0} and arguing as previously, we deduce that
         \begin{align*}
          \Big\|  H_{N_1} \Pi_{\pm_1}  \mc{I}^{\pm_1, M}\Big(\phi_{\mu, N} &\gamma^0  \varphi_{\lambda_2, N_2}- \sum_{d' \lesa \lambda_2} C_{\les d'}^{\pm_1, M} \big( C_{\les d'} \phi_{\mu, N} \gamma^0 \mc{C}^{\pm_2}_{d'} \varphi_{\lambda_2, N_2} \big)\Big) \Big\|_{Y^{\pm_1, M}_{\lambda_1}} \\
    &\lesa N^{\varrho} \Big( \frac{\lambda_2}{\mu}\Big)^{\frac{\varrho}{100}}  \Big( \|
    \phi_{\mu, N}\|_{L^4_{t,x}} \lambda_2^{-\frac{1}{2}}\| \varphi_{\lambda_2, N_2} \|_{L^4_{t,x}} \Big)^{\theta} \Big(  \mu^{\frac{1}{2}} \|
    \phi_{\mu, N} \|_{V^2_{+,1}} \| \varphi_{\lambda_2, N_2} \|_{V^2_{\pm_2,
        M}} \Big)^{1-\theta}.
        \end{align*}
Therefore it only remains to bound the second term in (\ref{eqn-proof of F control crit-Lp bound wave high}), but this follows by taking $\frac{1}{r}=1-2(\frac{1}{a}-\frac{1}{2})$ in \eqref{eqn:proof of thm duhamel sub:Y bd without loss}.

          \bibliographystyle{amsplain} \bibliography{remark}
        \end{document}